\newtheorem*{theorem*}{Theorem}
\newtheorem{theorem}{Theorem}[section]
\newtheorem{lemma}[theorem]{Lemma}
\newtheorem{proposition}[theorem]{Proposition}
\newtheorem{remark}[theorem]{Remark}
\newtheorem{definition}[theorem]{Definition}
\newtheorem{corollary}[theorem]{Corollary}
\newcommand{\CC}{\mathbb{C}}
\newcommand{\RR}{\mathbb{R}}
\newcommand{\TT}{\mathbb{T}}
\newcommand{\ZZ}{\mathbb{Z}}
\newcommand{\QQQ}{\mathcal{Q}}
\newcommand{\WW}{\mathbb{W}}
\newcommand{\mug}{\mu_{1, good}}
\newcommand{\supp}{\textrm{supp}}
\newcommand{\RD}{\textrm{RapDec}}
\newcommand{\Bad}{\textrm{Bad}}
\newcommand{\HH}{\mathcal{H}}
\begin{document}
\title[2D Falconer]{On Falconer's distance set problem in the plane}
\author{Larry Guth, Alex Iosevich, Yumeng Ou, and Hong Wang}
%\centerline{\today}
\begin{abstract} If $E \subset \RR^2$ is a compact set of Hausdorff dimension greater than $5/4$, we prove that there is a point $x \in E$ so that the set of distances $\{ |x-y| \}_{y \in E}$ has positive Lebesgue measure.
\end{abstract}

\maketitle

\section{introduction}

\vskip.125in 

For a set $E \subset \RR^d$, define the distance set 

$$\Delta(E)=\{|p-p'|: p,p' \in E\}.$$

\noindent Falconer's distance problem (\cite{Falc86}) is about the connection between the Hausdorff dimension of a set $E$ and the size of $\Delta(E)$.   Given a compact set $E$ in ${\Bbb R}^d$, $d \ge 2$, Falconer asked how large the Hausdorff dimension of $E$ needs to be to ensure that the Lebesgue measure of $\Delta(E)$ is positive. He proved that if $dim_{{\mathcal H}}(E)>\frac{d+1}{2}$, then ${\mathcal L}(\Delta(E))>0$. Using an example based on the integer lattice, he showed for every $s \leq \frac{d}{2}$ there exist sets of Hausdorff dimension $s$ for which ${\mathcal L}(\Delta(E))=0$. This led him to conjecture that if $dim_{{\mathcal H}}(E)>\frac{d}{2}$, then the Lebesgue measure of the distance set is positive. This is known as the Falconer Distance Conjecture. 

In \cite{W99}, Wolff proved that if $E \subset \RR^2$ is a compact set with Hausdorff dimension greater than $4/3$, then $\Delta(E)$ has positive Lebesgue measure.  In this paper, we improve the bound.

\begin{theorem} \label{main1} If $E \subset \RR^2$ is a compact set with Hausdorff dimension greater than $5/4$, then $\Delta(E)$ has positive Lebesgue measure.
\end{theorem}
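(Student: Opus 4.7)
The plan is to reduce the problem to showing that some pinned distance measure is in $L^2$, and then to bound its $L^2$ norm via Fourier analysis. Fix $s \in (5/4, \dim_{\mathcal{H}} E)$ and let $\mu$ be a compactly supported Frostman measure on $E$ of dimension $s$. For $x \in E$, define the pinned distance measure $\nu_x$ on $\RR$ by $\int f\,d\nu_x = \int f(|x-y|)\,d\mu(y)$. If some $x \in E$ satisfies $\|\nu_x\|_{L^2(\RR)} < \infty$, then $\nu_x$ is absolutely continuous with respect to $\mathcal{L}^1$, and since $\supp(\nu_x) \subset \Delta(E)$ we conclude $\mathcal{L}^1(\Delta(E)) > 0$.

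Applying Plancherel and decomposing dyadically in frequency, one reduces, at scale $R$, to estimating
\[
\int |\widehat{\nu_x}(t)|^2\,\phi(t/R)\,dt,
\]
where $\widehat{\nu_x}(t) = \int e^{-2\pi i t |x-y|}\,d\mu(y)$. Via stationary phase on the radial exponential, this dyadic piece is morally an incidence count of $\mu$-pairs $(y,y')$ with $\bigl||x-y|-|x-y'|\bigr| \lesssim R^{-1}$, i.e.\ a measurement of how much $\mu$ concentrates on $R^{-1}$-thickenings of circles centered at $x$. The goal is to show that these quantities are summable over dyadic $R$ for a positive $\mu$-measure set of $x$.

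The main obstacle, and the key new contribution over Wolff's $4/3$ bound, is obtaining the required $R$-decay for enough $x$'s; integrating $d\mu(x)$ right away washes out the pinning and recovers only $4/3$. I would instead define a \emph{bad set} $B \subset \supp(\mu)$ of those $x$ at which $\mu$ is abnormally concentrated on some thin annulus centered at $x$, and use a radial-projection estimate of Orponen--Liu type (essentially a quantitative Marstrand three-circle lemma) to show that $\mu(B)$ is small. For good $x \in \supp(\mu) \setminus B$, I would expand $\widehat\mu$ into wave packets on the annulus $|\xi| \sim R$ and perform a broad/narrow analysis in the style of Guth's restriction work: the broad contributions---where the wave packets reaching $x$ come from transverse arcs of the circle $|\xi|=R$---can be estimated by a bilinear $L^2$ or refined $\ell^2$-decoupling estimate for the circle, providing a gain beyond Wolff's circular Kakeya bound; the narrow contributions force extra concentration of $\mu$ and can be iterated back into the good/bad decomposition at a smaller scale. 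Balancing these trade-offs to obtain an exponent gain of $1/4$ above $4/3$, and thus push the Falconer threshold down to $5/4$, is the heart of the argument.
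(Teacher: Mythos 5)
Your reduction to an $L^2$ criterion for a pinned distance measure $\nu_x$ and your instinct to bring in Orponen-type radial projection estimates and a refined decoupling/Strichartz input both point at genuine ingredients of the paper. But the architecture you propose has a gap that would prevent it from reaching exponent $5/4$.

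The problem is that you propose to find a large set of \emph{good points} $x$ for which $\|\nu_x\|_{L^2} < \infty$, after removing a small bad set of $x$'s where $\mu$ concentrates on thin annuli centered at $x$. That strategy cannot succeed for $\alpha<4/3$. The train-track example (a variant of Katz--Tao, presented in detail in Section 6 of the paper) produces, for each $\alpha<4/3$, a Frostman measure $\mu$ of dimension $\alpha$ for which $\|d^x_*\mu\|_{L^2}=\infty$ for \emph{every} $x$ in $\supp\mu$, not merely for $x$ in a small exceptional set. So any notion of ``good $x$'' with the property you want would be empty for such $\mu$, and no broad/narrow or bilinear refinement of the frequency-side analysis can resurrect finiteness of $\|\nu_x\|_{L^2}$, since the obstruction is already present at the level of the physical-side quantity you are trying to bound. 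In short: the good/bad split must be applied to the \emph{measure}, not to the set of base points $x$.

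What the paper actually does is take two Frostman measures $\mu_1,\mu_2$ on separated pieces $E_1,E_2\subset E$, and decompose $\mu_1=\mu_{1,\mathrm{good}}+\mu_{1,\mathrm{bad}}$ by removing, for each ``bad'' tube $T$ (meaning $\mu_2(T)$ is too large), the piece $M_T\mu_1$ of $\mu_1$ with physical support in $T$ and frequency support in the dual direction. Then the argument is two-pronged: (i) Proposition~\ref{mainest1} shows that for most $x\in E_2$, $\|d^x_*\mu_1 - d^x_*\mu_{1,\mathrm{good}}\|_{L^1}$ is small -- this is where Orponen's radial projection theorem enters, used to bound the total $\mu_1$-mass of bad tubes through a typical $x$; and (ii) Proposition~\ref{mainest2} shows $\int_{E_2}\|d^x_*\mu_{1,\mathrm{good}}\|_{L^2}^2\,d\mu_2(x)<\infty$ for $\alpha>5/4$, via Liu's $L^2$ identity together with the refined decoupling estimate (Corollary~\ref{correfdec}), exploiting that each remaining wave packet lies in a \emph{good} tube of small $\mu_2$-mass. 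One then finds an $x$ where both hold; since $d^x_*\mu_1$ is a probability measure, the $L^1$-closeness plus the $L^2$ bound on the good part yield $|\Delta_x(E)|>0$ by Cauchy--Schwarz, even though $\|d^x_*\mu_1\|_{L^2}$ itself may be infinite. Without this mixed $L^1/L^2$ scheme built around a decomposition of the measure, the stated reduction does not close, and the $5/4$ threshold is out of reach.
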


In higher dimensions, Erdo\u{g}an proved in \cite{Erd05}  that if $dim_{{\mathcal H}}(E)>\frac{d}{2}+\frac{1}{3}$, then ${\mathcal L}(\Delta(E))>0$.  Recently, these estimates were improved for all $d \ge 3$ by Du, Guth, Ou, Wang, Wilson, and Zhang \cite{DGOWWZ18}.  In dimension 3, they showed that the Falconer conjecture holds when $dim_{\HH}(E) > 9/5$.  The estimates for $d \ge 4$ were further improved by Du and Zhang in \cite{DZ18}.  For large $d$, they prove that Falconer's conjecture holds when $dim_{\HH}(E) > \frac{d}{2} + \frac{1}{4} + o(1)$.  These works brought into play the decoupling theorem of Bourgain and Demeter \cite{BD15}.  This approach will also play a key role in our proof.

Returning to the planar case, there have been a number of important recent results.  Orponen \cite{O17} proved that if $E$ is a compact Ahlfors-David regular set of dimension $s \ge 1$, then $\Delta(E)$ has packing dimension 1.  Note that packing dimension 1 is only slightly weaker than positive measure.  This result was striking because in previous work on the problem, there was no evidence that the Ahlfors-David case would be any easier than the general case.  This approach was further developed by Keleti and Shmerkin \cite{KS18}.  They proved very strong estimates for sets that are even roughly like Ahlfors-David regular sets.  They also proved results about the Hausdorff dimension of $\Delta (E)$.  For instance, if $E$ is a compact set with Hausdorff dimension strictly greater than $1$, then they proved that the Hausdorff dimension of $\Delta(E)$ is at least $.685...$.   Bourgain (\cite{B03}) had proven that if $E$ has Hausdorff dimension at least 1, then $\Delta(E)$ has Hausdorff dimension at least $1/2 + \delta$ for some $\delta > 0$.  The value of $\delta$ could be made explicit but it would be very small, and so the $.685...$ is quite striking. We will use one of the key ideas of \cite{O17} and \cite{KS18} in the proof of Theorem \ref{main1}. 
  
There is a variant of the Falconer Distance Problem involving pinned distance sets.  For any point $x$, the pinned distance set $\Delta_x(E)$ is defined by

$$ \Delta_x(E)=\{|x-y|: y \in E\} .$$ 

\noindent Peres and Schlag (\cite{PS00}) proved that if $E \subset {\Bbb R}^d$, $d \ge 2$ and $dim_{{\mathcal H}}(E)>\frac{d+1}{2}$, then ${\mathcal L}(\Delta_x(E))>0$ for every $x \in E$ except for a set of small Hausdorff dimension. Improvements on the size of the exceptional set were obtained by the second listed author and Liu in \cite{ILiu17}.  

Recently, in \cite{Liu18}, Liu showed that if $dim_{{\mathcal H}}(E)>\frac{d}{2}+\frac{1}{3}$, then 
${\mathcal L}(\Delta_x(E)) > 0$ for every $x \in E$ except those in a set of small Hausdorff dimension.  Using Liu's method, we are also able to bound the size of pinned distance sets.  

\begin{theorem}\label{main}
If $E\subset\mathbb{R}^2$ is a compact set with Hausdorff dimension larger than $\frac54$, then there is a point $x\in E$ such that its pinned distance set 
$\Delta_x(E)$ has positive Lebesgue measure.
\end{theorem}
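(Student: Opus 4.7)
The plan is to adapt the proof of Theorem~\ref{main1} so that the point $x$ plays the role of a pin throughout, following the scheme introduced by Liu \cite{Liu18}. By Frostman's lemma, fix $s \in (5/4, \dim_{\HH}(E))$ and a Borel probability measure $\mu$ supported on $E$ with $\mu(B(z,r)) \lesssim r^s$ for every ball. To prove Theorem~\ref{main}, it suffices to exhibit a point $x \in E$ for which the pinned distance measure $\mu_x := (d_x)_{\ast}\mu$, with $d_x(y) = |x-y|$, is an $L^2$ function on the line; for then $\mu_x$ is a non-zero element of $L^2(\RR)$ and its support, contained in $\Delta_x(E)$, has positive Lebesgue measure.

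The strategy is to control the weighted average
$$
\mathcal{J} := \int \|\mu_x\|_{L^2(I)}^2 \, d\nu(x),
$$
where $I \subset (0, \operatorname{diam}E)$ is a fixed interval and $\nu$ is an auxiliary sub-probability measure on $E$ obtained by restricting $\mu$ to a "good" subset of pins produced via the radial-projection input from \cite{O17, KS18}. After a Littlewood--Paley decomposition $\mu = \sum_k \mu_k$ at frequency scale $2^k$, Plancherel rewrites $\|(\mu_x)_k\|_{L^2}^2$ as an integral over pairs of frequencies $\xi, \xi'$ with $|\xi|, |\xi'| \sim 2^k$ weighted by $e^{2\pi i x \cdot (\xi - \xi')}$; integrating against $d\nu(x)$ then produces, up to bounded factors, essentially the same multilinear Fourier expression whose boundedness underpins the proof of Theorem~\ref{main1}. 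The Bourgain--Demeter $\ell^2$ decoupling theorem, a refined Strichartz-type estimate for the circular extension operator in the spirit of \cite{DGOWWZ18, DZ18}, and the non-concentration/radial-projection input from \cite{O17, KS18} then combine to give a finite upper bound on $\mathcal{J}$.

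Once $\mathcal{J} < \infty$, pigeonholing in $x$ selects some $x \in \supp(\nu) \subseteq E$ with $\|\mu_x\|_{L^2(I)} < \infty$. Since $\mu_x$ is a non-trivial finite measure supported in $\Delta_x(E)$, this forces $\Delta_x(E) \cap I$ to have positive Lebesgue measure, proving Theorem~\ref{main}. The principal obstacle is to reorganize the proof of Theorem~\ref{main1} so that its two-variable structure---in particular the broad/narrow decomposition, the construction of the good measure, and the radial-projection step---is carried out with $x$ singled out as a pin rather than symmetrically in both variables, while still producing a set $\supp(\nu)$ of positive $\mu$-mass on which the pinned $L^2$ bound is uniform. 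This fibred version of the argument is the technical heart of the proof; once the correct $\nu$ is constructed, bounding $\mathcal{J}$ proceeds by exactly the same decoupling-plus-refined-Strichartz-plus-non-concentration scheme used to prove Theorem~\ref{main1}.
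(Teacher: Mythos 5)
Your plan hinges on showing $\|\mu_x\|_{L^2}<\infty$ for some $x$ in the support of $\nu$, by proving that $\mathcal{J}=\int\|\mu_x\|_{L^2(I)}^2\,d\nu(x)$ is finite and then pigeonholing. This cannot work for $5/4<\alpha<4/3$: the train-track example that the paper builds (a variant of the Katz--Tao example, see Section~\ref{sec:traintracks}) produces, for any $\alpha<4/3$, a Frostman measure $\mu$ on a set of dimension $\alpha$ with $\|d^x_*(\mu)\|_{L^2}=\infty$ for \emph{every} $x$ in $\supp(\mu)$. No choice of $\nu\ll\mu$, in particular no restriction to ``good'' pins produced by the radial projection theorem, will make $\mathcal{J}$ finite when the integrand is infinite everywhere. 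So the quantity you are trying to bound is simply not the right one.

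The missing idea is a decomposition of the measure $\mu_1$ itself, not merely of the set of pins. The paper writes $\mu_1=\mug+\mu_{1,\mathrm{bad}}$, where $\mug$ is obtained by removing ``bad'' wave packets $M_T\mu_1$ whose tube $T$ carries unexpectedly large $\mu_2$-mass, and proves \emph{two} estimates: (i) a Cauchy--Schwarz-able $L^2$ bound for the \emph{good} part, $\int_{E_2}\|d^x_*(\mug)\|_{L^2}^2\,d\mu_2(x)<\infty$ when $\alpha>5/4$ (this is where Liu's identity, decoupling, and the refined Strichartz estimate enter); and (ii) an $L^1$ closeness bound $\|d^x_*(\mu_1)-d^x_*(\mug)\|_{L^1}<1/1000$ for most $x\in E_2$ (this is where Orponen's radial projection theorem enters, and it is here, not in the $L^2$ step, that the set of good pins is selected). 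One then concludes: since $d^x_*(\mu_1)$ is a probability measure supported on $\Delta_x(E)$ and $d^x_*(\mug)$ is $L^1$-close to it, one gets $\int_{\Delta_x(E)}|d^x_*\mug|\ge 1-2/1000$, and Cauchy--Schwarz against the finite $L^2$ norm of $d^x_*\mug$ forces $|\Delta_x(E)|>0$. In your write-up the $L^1$ step is absent, the radial projection input is placed in the wrong part of the argument, and there is no mechanism to avoid the train-track obstruction. (There is also no broad/narrow decomposition in this proof; that is not one of the paper's tools.)
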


\subsection{Other norms}

The Falconer problem has also been studied for other norms.  Suppose that $K$ is a symmetric convex body in $\RR^d$ and $\| \cdot \|_K$ is the norm with unit ball $K$.  We let $\Delta_{K}(E)$ be the set of distances $ \| x-y \|_{K}$ with $x,y \in E$ and we let $\Delta_{K,x}(E)$ be the set of distances $\| x - y \|_K$ with $y \in E$.  If $K$ is the cube $[-1,1]^d$, then $\| \cdot \|_K$ is the $l^\infty$ norm, and it is not difficult to construct a compact set $E \subset \RR^d$ with Hausdorff dimension $d$ so that $\Delta_{K}(E)$ has measure zero.  But there are non-trivial results if $K$ is curved.  We focus on the case that $\partial K$ is $C^\infty$ smooth and has positive Gaussian curvature.  It is plausible that Falconer's conjecture remains true for all such norms, and most previous results on the problem extend to this setting.  For instance, Erdo\u{g}an's bound extends to this class of norms -- cf. Remark 1.6 in \cite{Erd05}.  Our method also extends to this class of norms.

\begin{theorem} \label{main2} Let $K$ be a symmetric convex body in ${\Bbb R}^2$ whose boundary $\partial K$ is $C^\infty$ smooth and has strictly positive curvature.  Let $E\subset\mathbb{R}^2$ be a compact set whose Hausdorff dimension is larger than $\frac54$. Then, there exists a point $x\in E$ so that the pinned distance set $$\Delta_{K,x}(E):=\{{||x-y||}_K:\, y\in E\}$$ has positive Lebesgue measure.
\end{theorem}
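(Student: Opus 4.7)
The plan is to run the proof of Theorem \ref{main} with the Euclidean norm $|\cdot|$ systematically replaced by $\|\cdot\|_K$, and to verify that every analytic input is a consequence of smoothness and positive curvature of the relevant level sets rather than of any special property of the Euclidean circle. Concretely, choose a Frostman measure $\mu$ on $E$ of dimension $s \in (5/4, \dim_{\HH}(E))$, and for each $x \in E$ form the $K$-pinned distance measure $\mu^K_x$ as the pushforward of $\mu$ under $y \mapsto \|x-y\|_K$. The goal is to locate $x \in E$ for which $\mu^K_x$ is absolutely continuous --- indeed belongs to $L^2$ --- which forces $\Delta_{K,x}(E)$ to have positive Lebesgue measure.

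Following the same scheme as in Theorem \ref{main}, one estimates the $L^2$ norm of $\mu^K_x$ averaged against an auxiliary measure on $E$, which unfolds into an expression involving the Fourier transform of the surface measure on the $K$-circle $\{y:\|x-y\|_K=t\}$. That surface measure is a translated dilate of the arclength measure on $\partial K$, and since $\partial K$ is smooth with strictly positive curvature, stationary phase gives the decay $|\widehat{\sigma_K}(\xi)| \lesssim (1+|\xi|)^{-1/2}$ with implicit constant depending only on $K$. All Bessel-type $L^2$ spherical-averaging estimates used in the Euclidean case thus have exact analogues. Likewise, the Bourgain--Demeter decoupling theorem for the cone (and its flat restriction, decoupling for smooth positively curved plane curves) applies verbatim to $\partial K$ and its lifted cone $\{(y,\|y\|_K):y\neq 0\}$, so the decoupling step that ties the proof to the $5/4$ threshold transfers without change.

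The remaining ingredient is the geometric/incidence input drawn from \cite{O17} and \cite{KS18}, phrased in the Euclidean case via radial projections $y \mapsto (y-x)/|y-x| \in S^1$. Its $K$-analogue is the map $y \mapsto (y-x)/\|y-x\|_K \in \partial K$; since $\partial K$ is $C^\infty$-diffeomorphic to $S^1$ with bounded Jacobians, the relevant radial-projection, thin-tube, and refined-decoupling estimates go through with only constant changes. The main obstacle I expect is not any single inequality but the bookkeeping required to confirm that no step in the Euclidean argument covertly relies on rotational symmetry; wherever such symmetry enters the baseline proof (for example, in defining angular cap decompositions of $S^1$, in integrating in polar coordinates, or in identifying the annulus $\{t \leq |x-y| \leq t+\delta\}$ with a translated $\delta$-neighborhood of a \emph{fixed} curve), one replaces it with the smooth diffeomorphism of $\partial K$ with $S^1$ and absorbs the resulting Jacobians into constants depending on $K$. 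Once this reformulation is in place, the quantitative estimates proceed exactly as for Theorem \ref{main} and yield the same $5/4$ threshold.
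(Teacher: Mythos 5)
You have the right high-level scheme, but there is a concrete gap at the analytic heart of the argument: the claim that ``all Bessel-type $L^2$ spherical-averaging estimates used in the Euclidean case thus have exact analogues'' because of the stationary-phase decay $|\widehat{\sigma_K}(\xi)|\lesssim(1+|\xi|)^{-1/2}$. The relevant input in Theorem \ref{main} is not a decay bound but Liu's \emph{exact} identity
$$\int_0^\infty |f*\sigma_t(x)|^2\,t\,dt=\int_0^\infty |f*\widehat\sigma_r(x)|^2\,r\,dr,$$
whose proof in \cite{Liu18} uses rotational symmetry of the Euclidean circle in an essential way; pointwise decay of $\widehat{\sigma_K}$ does not give you this. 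What the paper actually does (Lemma \ref{LiuK}) is develop a new proof of a weaker, approximate version of this identity for general $K$: one uses Herz's asymptotics for $\widehat{\sigma^K}$, which carry a phase $e^{\pm 2\pi i t\|\xi\|_{K^*}}$ involving the \emph{dual} norm, then performs a change of variables to polar coordinates adapted to $K^*$ and applies Plancherel in $t$. This produces a bound by $\int_0^\infty |f*\widehat{\sigma^{K^*}_r}(x)|^2\,r\,dr$ plus remainder terms of the form $\|f\|^2_{\dot H^{-1/2+\epsilon}}+\|f\|^2_{\dot H^{-1/2}}$, which must then be shown finite for $f=\mug$ via an energy/$\dot H^{-s}$ estimate. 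None of this is present in your proposal, and it is exactly what the paper flags as the most substantive new ingredient.

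A second, related omission is the role of $K^*$ in the wave-packet decomposition. It is not enough to ``replace $|\cdot|$ by $\|\cdot\|_K$ systematically'': because the Fourier transform of $\sigma^K_t$ oscillates like $e^{2\pi i t\|\xi\|_{K^*}}$, the Fourier caps $\tau$, the directions of the tubes $T\in\TT_{j,\tau}$ (which must be normal to $\partial(RK^*)$, i.e.\ given by the Gauss map $v(\omega)$), and the surface $S$ in the refined decoupling theorem all have to be built from $\partial K^*$ rather than $\partial K$. Likewise the pushforward is replaced by the technically convenient variant $T_{K,x}f(t)=t^{1/2}\sigma^K_t*f(x)$ specifically so that the $t^{1/2}$ cancels the $|t\xi|^{-1/2}$ in Herz's expansion and makes the Plancherel step go through cleanly. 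These are structural changes to the decomposition, not Jacobians to be absorbed into constants. (Your suggestion to $K$-adapt the radial projection in the $\Bad_j$ estimate, on the other hand, is unnecessary — the paper keeps the Euclidean radial projection and Orponen's theorem unchanged — but it would also not be wrong, since the two projections differ by a bilipschitz reparametrization of the circle.)
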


%Keleti and Shmerkin (\cite{KS18}, Theorem 1.1)proved that if the Hausdorff dimension of a compact set $E \subset {\Bbb R}^2$ is equal to $s > 1$, then there exists $x \in E$ so that the Hausdorff dimension of $\Delta_x(E)$ is at least $\frac{2}{3}s$. The proofs of Theorem \ref{main1} and Theorem \ref{main2} below can be adapted to yield the following. 

\begin{remark} \label{ksanalogrmk} One can adapt the proof of Theorem \ref{main1} and Theorem \ref{main2} to yield the following result. Suppose that the Hausdorff dimension of a compact set $E \subset {\Bbb R}^2$ is equal to $s>1$ and $K$ is as in Theorem \ref{main2}. Then there exists $x \in E$ such that the upper Minkowski dimension of $\Delta_{x, K}(E)$ is $\ge \frac{4s}{3}-\frac{2}{3}$.  Keleti and Shmerkin (\cite{KS18}) obtained the lower bound $\frac{1}{4}(1+s+\sqrt{3s(2-s)})$ in the case of the Euclidean metric. Their estimate is better than ours near $s=1$, but ours is preferable as $s$ nears $\frac{5}{4}$. The sketch of this argument is given in Appendix where we also discuss the complications of replacing the upper Minkowski dimension by the Hausdorff dimension in the claim above. \end{remark} 

\vskip.125in 

Falconer's distance problem can be thought of as a continuous analogue of a combinatorial problem raised by Erd\H os in \cite{Erd45}: given a set $P$ of $N$ points in $\RR^d$, what is the smallest possible cardinality of $\Delta(P)$.  A grid is the best known example in all dimensions.  In two dimensions, Guth and Katz \cite{GK15} proved a lower bound for $|\Delta(P)|$ which nearly matches the grid example (up to a factor of $\log^{1/2} N$).  In higher dimensions, there is a larger gap, and the best known result is due to Solymosi and Vu \cite{SV08}.  The Erd\H os distinct distance problem also makes sense for general norms and much less is known about it.  In the planar case, if $K$ is smooth and has strictly positive curvature, the best known bound says that if $|P| = N$, then $| \Delta_K(P) | \gtrsim N^{3/4}$, with stronger estimates established by Garibaldi in special cases (\cite{Gar04}).  There is a conversion mechanism to go from Falconer-type results to Erd\H os-type results that was developed by the second author together with Hoffman (\cite{HI05}), Laba (\cite{IL05}), and Rudnev and Uriarte-Tuero (\cite{IRU14}).  It gives estimates for point sets that are fairly spread out.  Applying the conversion mechanism to Theorem \ref{main2} we get the following corollary:

\begin{corollary} \label{falconertoerdosthm}  Let $K$ be a symmetric convex body in ${\Bbb R}^2$ whose boundary $\partial K$ is $C^\infty$ smooth and has strictly positive curvature.  Let $P$ be a set of $N$ points in ${[0,1]}^2$ so that the distance between any two points is $\gtrsim N^{-1/2}$.  
	Then there exists $x \in P$ such that 
	\begin{equation} \label{erdospinnedours} | \Delta_{K,x}(P)| \gtrapprox N^{\frac{4}{5}}. \end{equation} 
\end{corollary}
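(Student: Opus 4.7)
The plan is to apply the Falconer-to-Erd\H os conversion mechanism of \cite{HI05,IL05,IRU14}: thicken $P$ into a compact set $E$ of Hausdorff dimension slightly greater than $5/4$, apply Theorem \ref{main2} to produce a pinned distance set of positive measure in $E$, and then transfer that bound back to $P$.

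Fix $s \in (5/4, 2)$ close to $5/4$. Let $F \subset [0,1]^2$ be a fixed compact set carrying a probability measure $\nu$ with $\nu(B(x, r)) \leq r^s$ for all $x$ and $r > 0$; in particular $\dim_H(F) \geq s$ and $I_s(\nu) < \infty$. Set $\delta := N^{-1/s}$, which satisfies $\delta \leq c_0 N^{-1/2}$ once $N$ is large (since $s < 2$). Define
\[
E := \bigcup_{p \in P} (p + \delta F).
\]
By the min-separation hypothesis on $P$, the rescaled copies $p + \delta F$ are pairwise disjoint, so $\dim_H(E) \geq s > 5/4$. Theorem \ref{main2} applied to $E$ produces a point $x_0 \in E$ with $|\Delta_{K,x_0}(E)| > 0$. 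Writing $x_0 = p_0 + \delta f_0$ with $p_0 \in P$ and $f_0 \in F$, for any $y = p + \delta f \in E$ the triangle inequality for $\|\cdot\|_K$ gives
\[
\big| \|x_0 - y\|_K - \|p_0 - p\|_K \big| \leq \delta \|f_0 - f\|_K \leq C\delta,
\]
with the convention $\|p_0 - p_0\|_K = 0$. Hence $\Delta_{K,x_0}(E)$ is contained in the union of at most $|\Delta_{K,p_0}(P)| + 1$ intervals of length $2C\delta$, so that
\[
|\Delta_{K,x_0}(E)| \leq 2C\delta \bigl(|\Delta_{K,p_0}(P)| + 1\bigr).
\]
Rearranging gives $|\Delta_{K,p_0}(P)| \gtrsim \delta^{-1} = N^{1/s}$, and letting $s \downarrow 5/4$ yields $|\Delta_{K,p_0}(P)| \gtrapprox N^{4/5}$.

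The main obstacle is that Theorem \ref{main2} is a priori only qualitative: its conclusion $|\Delta_{K,x_0}(E)| > 0$ does not come with a lower bound uniform in $N$, whereas the conversion above requires exactly such uniformity. To resolve this, one inspects the proof of Theorem \ref{main2} and checks that the implicit constant depends only on $s$ and on the $s$-energy of a Frostman measure supported on $E$. Setting $\mu := \tfrac{1}{N} \sum_{p \in P} (T_p)_* \nu$ with $T_p(z) := p + \delta z$, a direct dyadic computation using the $N^{-1/2}$-separation of $P$ shows $I_s(\mu) \lesssim I_s(\nu) + 1$ uniformly in $N$, which yields the required uniform lower bound $|\Delta_{K,x_0}(E)| \geq c(s, \nu) > 0$ and completes the argument.
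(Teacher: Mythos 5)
Your proposal is correct and follows essentially the same route as the paper. The paper's proof of Corollary~\ref{falconertoerdosthm} also thickens $P$ at scale $N^{-1/s}$, invokes the quantitative content of the proof of Theorem~\ref{main2} (noting that the constant in $\mathcal{L}(\Delta_{K,x_0}) \ge c > 0$ depends only on the $s$-energy/Frostman data, which is bounded uniformly in $N$ thanks to the $N^{-1/2}$-separation), and then converts the measure lower bound into a count of distinct distances at scale $N^{-1/s}$, finally letting $s \downarrow 5/4$. The only difference is cosmetic: the paper thickens each point by a ball of radius $N^{-1/s}$ (i.e., $\mu^s_P$ is a sum of rescaled bump functions, matching the $s$-adaptability framework of \cite{IRU14}), whereas you thicken by scaled copies of an auxiliary Frostman set $F$; both yield a measure with uniformly bounded Frostman constant and $s$-energy, and your computation of $I_s(\mu) \lesssim I_s(\nu) + 1$ is precisely the verification of the discrete energy condition (\ref{discreteenergy}) that the paper cites as ``easy to check.'' One small imprecision worth noting: the constant in Theorem~\ref{main2} actually depends on the $\beta$-energy for some $\beta$ slightly below $s$ (together with the Frostman constant, via Proposition~\ref{mainest1} and Orponen's theorem), not literally the $s$-energy, but since both are uniformly controlled by the Frostman constant of $\mu$ this does not affect the argument.
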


\subsection{The main obstacle}

The work on the Falconer problem by Wolff \cite{W99} and Erdo\u{g}an \cite{Erd05} is based on a framework developed by Mattila (\cite{Mat85} and \cite{Mat87}) which connects the original geometric problem to estimates in Fourier analysis.   Suppose that $E$ is a compact set with positive $\alpha$-dimensional Hausdorff measure.  Then there is a probability measure $\mu$ supported on $E$ with $\mu(B(x,r)) \lesssim r^\alpha$ for every ball $B(x,r)$.  The measure $\mu$ is called a Frostman measure (cf. \cite{W03}, Proposition 8.2.).  Let $d(x,y) = |x-y|$.  Mattila considered the pushforward measure $d_*(\mu \times \mu)$.   Recall that a pushforward measure is defined by

$$ \int_\RR \psi(t) d_*(\mu \times \mu) := \int_{E \times E} \psi(|x-y|) d \mu(x) d \mu(y). $$

\noindent In particular $d_*(\mu \times \mu)$ is a probability measure supported on $\Delta(E)$.  Mattila noted that if $ \| d_*(\mu \times \mu) \|_{L^2}^2 = \int d_*(\mu \times \mu)(t)^2 dt$ is finite, then Cauchy-Schwarz forces the Lebesgue measure of $\Delta(E)$ to be positive.  Then he described an interesting way to rewrite $\| d_*(\mu \times \mu) \|_{L^2}^2$ in terms of the Fourier transform of $\mu$.  The resulting integral is connected to restriction theory, and Wolff used that connection to prove the bound in \cite{W99}, building on earlier work by Bourgain \cite{B94}.

In \cite{Liu18}, Liu used a different framework for the Falconer problem which leads to estimates on pinned distance sets.  For any $x$, define $d^x(y) = |x-y|$.  He studied the quantity

\begin{equation} \label{liuquant} \int \| d^x_* (\mu) \|_{L^2}^2 d \mu(x). \end{equation}

\noindent If this key quantity is finite, then for almost every $x \in E$, $\| d^x_* \mu \|_{L^2}$ is finite, and then a Cauchy-Schwarz argument forces the Lebesgue measure of $\Delta_x(E)$ to be positive.  Liu introduced an interesting way to rewrite this quantity in terms of the Fourier transform of $\mu$.  It can then be studied using restriction theory, leading to estimates on the pinned distance problem.  

In the planar case, there is an obstruction to pushing either one of these methods to dimensions below $4/3$.  For every $\alpha < 4/3$, there is a set $E$ of dimension $\alpha$ and a Frostman measure $\mu$ on $E$ so that $\| d_*(\mu \times \mu) \|_{L^2}$ is infinite, and also $\| d^x_* (\mu) \|_{L^2}$ is infinite for every $x \in E$.  This set is a variation on an example from \cite{KT01}.  The set $E$ looks roughly like several parallel train tracks.  In the following figure, we show an approximation of the set $E$ at a small scale $R^{-1}$.  The measure $\mu$ (approximated at scale $R^{-1}$) is just the normalized area measure on this set.

\begin{center}
\includegraphics[scale=1]{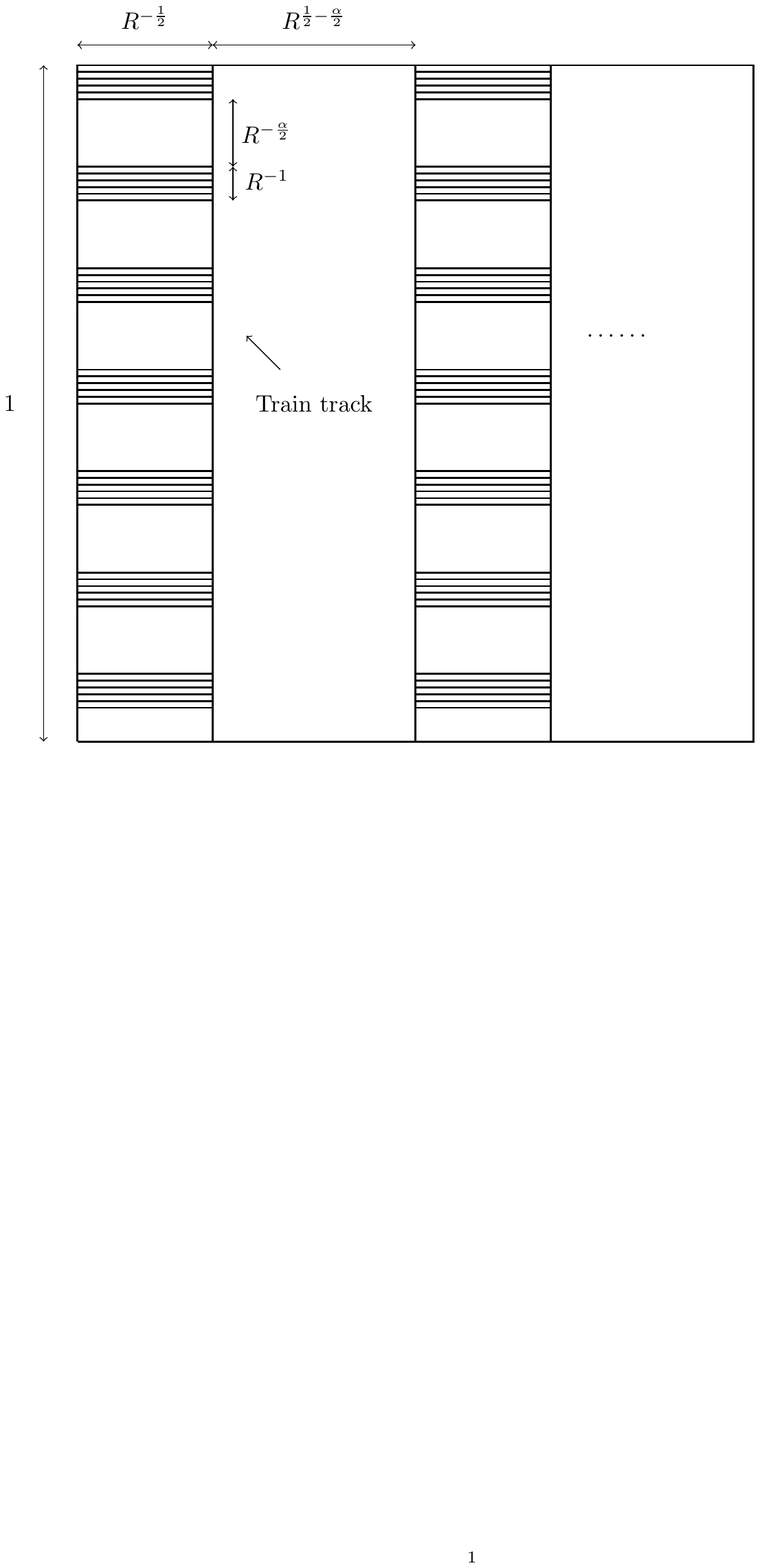}
Figure 1 
\end{center}

The set $E$ is divided among several large $R^{-1/2} \times 1$ rectangles.  Within each of these large rectangles, the set $E$ consists of evenly spaced parallel rectangles with dimensions $R^{-1/2} \times R^{-1}$.  Each of these smaller rectangles is called a slat.  The restriction of $E$ to one of the larger rectangles is called a train track.  The spacing between two consecutive slats is controlled by the dimension of $E$, and it works out to $R^{-\alpha/2}$.  If $x$ and $y$ are in the same train track, on roughly opposite sides, and if $y$ is $M$ slats from $x$, then $|x-y|$ lies in the interval 

$$I_M := [ M R^{-\alpha/2} - R^{-1}, M R^{-\alpha/2} + R^{-1}].$$

\noindent   Therefore, $ d_*(\mu \times \mu)$ assigns a lot of mass to the union of the intervals $I_M$.  This union is quite small, and even though the mass involved is significantly less than 1, it is still enough to force $\int |d_*(\mu \times \mu)|^2$ to be very large.  

There is a similar issue for $d^x_*(\mu)$.  If we fix any $x \in E$, and we let $T_0$ be the large rectangle containing $x$, then $d^x_* ( \mu|_{T_0})$ is mostly concentrated on $\cup I_M$, and this forces $\int |d^x_* \mu|^2$ to be very large.  On the other hand, if $T$ is a large rectangle which is far from $x$, then $d^x_* (\mu|_T)$ is rather evenly distributed --  in fact $d^x_*( \mu |_{T})$ is close to the pushforward of the uniform measure on $T$ with the same total mass.
 So if we graph $d^x_* (\mu)$, it has some peaks along $\cup I_M$ coming from the rectangle $T_0$ through $x$, but the bulk of $d^x_*(\mu)$ is spread rather evenly and comes from rectangles $T$ far from $x$.  In particular, the support of $d^x_*(\mu)$ indeed has positive Lebesgue measure.

This example is the main obstacle to proving the Falconer conjecture for dimensions less than 4/3.  Starting with a general Frostman measure, we separate out a part of it that resembles the train tracks in the example above.  Then we estimate the train-track part and the non-train-track part in different ways.  

For technical reasons, we consider two subsets $E_1, E_2 \subset E$ separated by distance $\sim 1$, and we let $\mu_1$ and $\mu_2$ be Frostman measures on $E_1, E_2$.  In the example above, we can imagine that $E_1$ is the bottom third and $E_2$ is the top third.  We divide $\mu_1$ into two pieces

$$ \mu_1 = \mug + \mu_{1, bad}, $$

\noindent where $\mu_{1,bad}$ is essentially the train-track-like part of $\mu_1$.   We always arrange, however, that $\int \mu_{1, bad} = 0$.

For example, if $\mu_1$ is the normalized area measure on the set $E_1$ in Figure 1 above, then $\mug$ would be (approximately) the normalized area measure on the union of the large rectangles.  The bad part, $\mu_{1, bad}$, is equal to $\mu_1 - \mug$, so it would be large on the slats and slightly negative on the parts of the large rectangles outside of the slats.  If $T_0$ is the large rectangle containing $x$, then $d^x_* (\mug|_{T_0})$ would be much more spread out than $d^x_* (\mu_1 |_{T_0})$.  On the other hand, if $T$ is far from $x$, then $d^x_* (\mug|_T)$ would be almost the same as $d^x_* (\mu_1 |_T)$.  All together, the graph of $d^x_* (\mug)$ would look like the graph of $d^x_* (\mu_1)$ with the peaks damped out.  The pushforward $d^x_*(\mug)$ would be quite evenly spread and its $L^2$ norm would be finite.  The graph of $d^x_*(\mu_{1, bad})$ would include the tall thin peaks from $d^x_*(\mu_1)$, and it would be slightly negative between the peaks.  Since the thin peaks have small mass, the $L^1$ norm of $d^x_*(\mu_{1, bad})$ would be small.

To prove Theorem \ref{main}, we will show that the features of $\mug$ and $\mu_{1, bad}$ that we just observed in the example from Figure 1 will occur for any set $E$ of dimension greater than $5/4$.  There are two main estimates.  The first estimate, in Proposition \ref{mainest1},  says that for most $x \in E_2$, $\| d^x_*(\mu_{1, bad})\|_{L^1}$ is small, and so the $L^1$ distance between  $ d^x_* (\mu_1)$ and $ d^x_*(\mug)$ is small.   The bad part, $\mu_{1, bad}$, is made from train track configurations, and that helps us analyze it.  Analyzing each individual train track is not difficult.  However, unlike in our example above, it could happen that each point lies in many different train tracks going in different directions.  To control this type of behavior, we use an estimate of Orponen from \cite{O17b} which also played a key role in Keleti and Shmerkin's work on the Falconer problem \cite{KS18}.  

The second estimate says that $d^x_* \mug$ is better behaved in $L^2$ than $d^x_* \mu_1$.  More precisely, Proposition \ref{mainest2} says that if $\alpha > 5/4$, then $\int_{E_2} \| d^x_* \mug \|_{L^2}^2$ is finite.  The proof of Proposition \ref{mainest2} is based on Liu's framework and on decoupling.  We will prove and then use a refinement of the decoupling theorem (Theorem \ref{refdec}) which is related to the refined Strichartz estimates that appear in \cite{DGL17}, \cite{DGLZ18}, and \cite{DGOWWZ18}.  This refinement of decoupling was proven independently by Xiumin Du and Ruixiang Zhang (personal communication).  It may be of independent interest.

Here is an outline of the paper.  In Section \ref{sec:outline}, we set up our framework (defining $\mug$ precisely) and outline the main estimates.  At that point, we will be able to make some further comments about the proofs of the two main propositions.  In Section \ref{sec:mainest1}, we prove Proposition \ref{mainest1}.  In Section \ref{sec:RS}, we state and prove a refinement of the decoupling theorem.  At that point, we will give some more context about this result.  Section 4 does not depend on any previous sections.  In Section \ref{sec:mainest2}, we prove Proposition \ref{mainest2} by combining Liu's framework with our decoupling tools.  This will finish the proof of Theorem \ref{main}. In Section \ref{sec:traintracks}, we present in detail the train track example that we introduced above.  In Section \ref{sec:genmetric}, we adapt our arguments to general metrics $\| \cdot \|_K$, proving Theorem \ref{main2}.  In Section \ref{sec:erdos}, we prove Corollary \ref{falconertoerdosthm}.  

\vskip10pt

{\bf Acknowledgements.} The first author is supported by a Simons Investigator grant. The second author is supported in part by the NSA Grant H98230-15-0319.  The third author is supported by NSF-DMS \#1764454.

\section{Setup and outline of the main estimates} \label{sec:outline}

Let $E \subset \RR^2$ be a compact set with positive $\alpha$-dimensional Hausdorff measure.  Without loss of generality, we can suppose that $E$ is contained in the unit disk.  Let $E_1$ and $E_2$ be subsets of $E$ with positive $\alpha$-dimensional Hausdorff measure so that the distance from $E_1$ to $E_2$ is $\gtrsim 1$.  Each subset $E_i$ admits a measure $\mu_i$ with the following two properties:

\begin{equation} \label{muprob} \mu_i \textrm{ is a probability measure supported on } E_i. \end{equation}

\begin{equation} \label{mualphadim} \mu_i (B(x, r)) \lesssim r^\alpha. \end{equation}

We will explain how to define $\mug$ by removing ``train-track like'' pieces from $\mu_1$.  Before going into the details, let us explain the features of a train track that motivate our definition of $\mug$.  Let $\mu$ be the example in Figure 1 and let $T$ be one of the $R^{-1/2} \times 1$ rectangles containing a train track of the set $E$.  One feature of $T$ is that $\mu(T)$ is large.  Because the slats of the train track are perpendicular to the direction of $T$, the Fourier transform of $\mu|_T$ is concentrated on frequencies that are in the same direction as $T$.  This is a second feature of $T$.  So to build $\mug$, we will first identify rectangles $T$ with large $\mu$ measure and call them bad rectangles.  Then for each bad rectangle $T$, we will identify the part of $\mu$ with physical support in $T$ and frequency support in the direction of $T$, and remove that part.  Here is the precise definition.  

We consider a sequence of scales $R_0$, $R_1$, $R_2$, etc.  Here $R_0$ is a large number that we will choose later and $R_j = 2^j R_0$.  Cover the annulus $R_{j-1} \le |\omega| \le R_j$ by rectangular blocks $\tau$ with dimensions approximately $R_j^{1/2} \times R_j$.  The long direction of each block $\tau$ is the radial direction.  We choose a partition of unity subordinate to this cover, so that

$$ 1 = \psi_0 + \sum_{j \ge 1, \tau} \psi_{j, \tau}. $$

Let $\delta > 0$ be a small constant.

For each $(j, \tau)$, cover the unit disk with tubes $T$ of dimensions approximately $R_j^{-1/2 + \delta} \times 1$ with the long axis parallel to the long axis of $\tau$.  Let $\TT_{j, \tau}$ be the collection of all these tubes, and let $\eta_T$ be a partition of unity subordinate to this covering, so that for each choice of $j$ and $\tau$, $ \sum_{T \in \TT_{j, \tau}} \eta_T $ is equal to 1 on the disk of radius 2.

Define an operator $M_T$ associated to a tube $T \in \TT_{j, \tau}$ by

$$ M_T f := \eta_T (\psi_{j, \tau} \hat f)^{\vee}. $$

\noindent Morally, $M_T f$ is the part of $f$ which has Fourier support in $\tau$ and physical support in $T$.  We also let $M_0 f := (\psi_0 \hat f)^{\vee}$.  We denote $\TT_j = \cup_{\tau} \TT_{j, \tau}$ and $\TT = \cup_{j \ge 1} \TT_j$.  If $f$ is a function supported on the unit disk, then
$f = M_0 f + \sum_{T \in \TT} M_T f$, up to a tiny error (see Lemma \ref{f=MTf} below for a precise statement).

We call a tube $T \in \TT_{j, \tau}$ bad if

$$ \mu_2 (T) \ge R_j^{-1/2 + 100 \delta}. $$

\noindent To get a sense of what this means, notice that the number of tubes $T \in \TT_{j, \tau}$ is $\sim R_j^{1/2 - \delta}$.  If each tube $T \in \TT_{j, \tau}$ contained the same amount of the measure $\mu_2$, then for each tube we would have $\mu_2(T) \sim R_j^{-1/2 + \delta}$.  A tube is bad, if it contains significantly more $\mu_2$ measure than this.  A tube is good if it is not bad.  Now we define $\mug$ to be the sum of contributions from all the good tubes.

$$ \mug := M_0 \mu_1 + \sum_{T \in \TT, T \textrm{ good}} M_T \mu_1. $$

We describe a couple of examples to give a sense of how $\mug$ behaves.  If $\mu_1$ is the normalized area measure on the set $E$ in Figure 1 above, then $\mug$ would be (approximately) the normalized area measure on the union of the large rectangles.  On the other hand, if we took the set $E$ in Figure 1 above and we changed it by tilting the slats at a 45 degree angle while keeping the large rectangles vertical, then $\mug$ would be essentially equal to $\mu_1$.  In general, $\mug$ may not be real-valued, but it is a distribution.

Our main theorem (Theorem \ref{main}) follows from two estimates about the pushforward measures $d^x_* \mu_1$ and $d^x_* \mug$.

\begin{proposition} \label{mainest1} If $\alpha > 1$, and if we choose $R_0$ large enough, then there is a subset $E_2' \subset E_2$ so that $\mu_2(E_2') \ge 1 - \frac{1}{1000}$ and for each $x \in E_2'$,
	
	$$	\| d^x_*(\mu_1) - d^x_*(\mug) \|_{L^1} < \frac{1}{1000}. $$
\end{proposition}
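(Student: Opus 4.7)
The plan is to bound $\int_{E_2}\|d^x_*(\mu_1)-d^x_*(\mug)\|_{L^1}\,d\mu_2(x)$ by a quantity less than $10^{-6}$, after which the proposition follows from Markov's inequality. By the definition of $\mug$, the difference may be written as $\sum_{j\ge 1}\sum_{T\in\TT_j,\,T\text{ bad}} M_T\mu_1 + \mathcal{E}$, where $\mathcal{E}$ is a Schwartz error coming from the approximate partition of unity and can be absorbed for $R_0$ large. The main task is thus to estimate the $\mu_2$-average of $\|d^x_*\bigl(\sum_{T\text{ bad}} M_T\mu_1\bigr)\|_{L^1}$.

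The next step is a stationary-phase localization. For each bad tube $T\in\TT_{j,\tau}$, the function $M_T\mu_1$ has essential physical support in $T$ and Fourier support in the block $\tau$ centred near frequency $R_j e_\tau$, where $e_\tau$ is the unit vector in the radial direction of $\tau$. Writing the pushforward $d^x_*(M_T\mu_1)(s) = \int_{S(x,s)} M_T\mu_1\,d\sigma$ as an oscillatory integral on the circle $S(x,s)$, the phase is stationary exactly at points $y\in T$ with $y-x\parallel e_\tau$. Let $T^*$ denote the $R_j^{-1/2+\delta}$-thickening of the infinite line extending the axis of $T$. Repeated integration by parts in the nonstationary regime gives $\|d^x_*(M_T\mu_1)\|_{L^1}\lesssim R_j^{-N}\|M_T\mu_1\|_{L^1}$ for $x\notin T^*$ and any large $N$, while for $x\in T^*$ we use the trivial bound $\|d^x_*(M_T\mu_1)\|_{L^1}\le\|M_T\mu_1\|_{L^1}\lesssim \mu_1(CT)$. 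Summing over bad $T$, integrating against $\mu_2$, and absorbing the Schwartz tails gives
$$\int_{E_2}\|d^x_*(\mu_1-\mug)\|_{L^1}\,d\mu_2 \;\lesssim\; \sum_{j\ge 1} S_j + R_0^{-N}, \qquad S_j := \sum_{T\text{ bad in }\TT_j}\mu_1(CT)\,\mu_2(T^*).$$

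The heart of the argument is to show $S_j\lesssim R_j^{-\varepsilon}$ for some $\varepsilon=\varepsilon(\alpha)>0$, so that the geometric sum is bounded by $R_0^{-\varepsilon}$. For this we invoke Orponen's radial projection theorem from \cite{O17b} (in the style in which it is used in \cite{KS18}). Since $\alpha>1$ and $E_1,E_2$ are separated, for $\mu_2$-almost every $x\in E_2$ the projection $\pi_{x*}\mu_1$ is a Frostman measure on $S^1$ of some positive dimension $\beta$, which translates into the tube estimate $\mu_1(T^*)\lesssim R_j^{-\beta(1/2-\delta)}$ for every tube $T^*\ni x$. Meanwhile, the definition of \emph{bad} requires $\mu_2(T)\ge R_j^{-1/2+100\delta}$, a factor of $R_j^{99\delta}$ above the typical Orponen-predicted value, so for most $x$ the bad directions through $x$ form an angular exceptional set of relative measure $R_j^{-\varepsilon}$. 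Pairing the two estimates (by Cauchy--Schwarz or by restricting to the exceptional angular set), summing over the $R_j^{1/2-\delta}$ directions, and handling the few tubes close to $x$ by a crude bound, produces the desired per-scale gain. Markov's inequality then yields $E_2'$ with $\mu_2(E_2')\ge 1-1/1000$ once $R_0$ is chosen large enough.

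The main obstacle is extracting a genuine \emph{quantitative} power gain from the qualitative Orponen theorem, uniformly across the scales $R_j$, so that the exceptional sets (one per scale) can be amalgamated into a single subset of $E_2$ of $\mu_2$-measure $\ge 1-1/1000$. The asymmetric roles of $\mu_1$ and $\mu_2$ --- the bad condition is defined using $\mu_2$ but the pushforward acts on $\mu_1$ --- together with the fact that $x\in\supp\mu_2$, require care in applying radial-projection estimates to $\mu_2$ without losing the $R_j$-decay needed to sum over $j$.
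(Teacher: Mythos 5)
Your overall scaffolding matches the paper's: a stationary-phase lemma shows that $d^x_*(M_T\mu_1)$ is negligible when $x$ is far from $T$, a trivial $L^1$ bound handles the near-field case, and the problem reduces to controlling the bilinear quantity $\sum_{T\in\TT_j,\,T\text{ bad}}\mu_1(CT)\,\mu_2(T^*)$ per scale, with Orponen's radial projection estimate providing the per-scale power gain. However, the way you invoke Orponen has a genuine gap --- one you partly sense at the end of your own write-up.

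First, Orponen's theorem (Theorem~\ref{orponenthm}) does not say that $\pi_{x*}\mu_1$ is a Frostman measure on $S^1$ for a.e.\ $x$; it gives only an $L^p$ average bound, $\int \|P_y\mu_2\|_{L^p}^p\,d\mu_1(y)<\infty$ for some $p(\alpha)>1$. That is strictly weaker than Frostman regularity and, in particular, does not yield your claimed tube estimate $\mu_1(T^*)\lesssim R_j^{-\beta(1/2-\delta)}$. Second, and more importantly, to show that the bad directions through a point $x$ form a small angular set, you implicitly need control of the radial projection of $\mu_2$ from $x$ --- but $x\in E_2 = \supp\mu_2$, so the supports are not separated and the hypotheses of Orponen's theorem fail. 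You cannot project a measure from a typical point of its own support and expect an $L^p$ bound on the pushforward; nearby mass can make the projection singular. The paper sidesteps both difficulties with a Fubini swap: one writes $\int_{E_2}\mu_1(\Bad_j(x))\,d\mu_2(x)=\int_{E_1}\mu_2(\Bad_j(y))\,d\mu_1(y)$ and then, for fixed $y\in E_1$, radially projects $\mu_2$ \emph{from $y$} (now the supports are separated, so Orponen applies). The badness condition $\mu_2(2T)\ge R_j^{-1/2+100\delta}$ then says that $P_y\mu_2$ has density $\gtrsim R_j^{99\delta}$ on the arc $A(T)$ of length $\sim R_j^{-1/2+\delta}$ subtended by $2T$; a Vitali covering argument gives $|P_y(\Bad_j(y))|\lesssim R_j^{-99\delta}$, and H\"older against the $L^p$ bound on $P_y\mu_2$ yields $\mu_2(\Bad_j(y))\lesssim R_j^{-c(\alpha)\delta}\|P_y\mu_2\|_{L^p}$, which integrates in $y$ by Orponen. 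Your proposal, as written, does not carry out this symmetrization, and without it the Orponen input cannot be legitimately applied; that is the missing idea.
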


\begin{proposition} \label{mainest2} If $\alpha > 5/4$, then 
	
	$$	\int_{E_2}  \| d^x_*(\mug) \|_{L^2}^2 d \mu_2(x) < + \infty. $$
\end{proposition}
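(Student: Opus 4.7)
The plan is to reduce Proposition \ref{mainest2} via Liu's Fourier-analytic framework to a weighted extension estimate for the light cone in $\RR^3$, and then to close the argument with the refined decoupling theorem (Theorem \ref{refdec}) combined with the tube non-concentration information built into the definition of $\mug$.

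\emph{Step 1 (dyadic reduction).} I would write $\mug=M_0\mu_1+\sum_{j\ge1}f_j$ with $f_j:=\sum_{T\in\TT_j,\,T\text{ good}}M_T\mu_1$ essentially frequency-supported in the annulus $\{|\xi|\sim R_j\}$. The piece $M_0\mu_1$ contributes $O(1)$. Since Liu's identity turns $\|d^x_*f\|_{L^2}^2$ into a non-negative quadratic form that is nearly diagonal across distinct frequency annuli, it suffices to prove, for some $\eta=\eta(\alpha)>0$, the single-scale estimate
\begin{equation*}
\int_{E_2}\|d^x_*f_j\|_{L^2}^2\,d\mu_2(x)\;\lesssim\;R_j^{-\eta},
\end{equation*}
and then sum in $j$.

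\emph{Step 2 (Liu's identity; reduction to a cone extension).} Starting from $d^x_*\mu(t)=t\int_{S^1}\mu(x+t\omega)\,d\omega$ and using the asymptotic $t\,\hat\sigma(t\xi)\sim t^{1/2}|\xi|^{-1/2}e^{\pm 2\pi i t|\xi|}$ for the arclength measure on $S^1$, I would rewrite $d^x_*f_j(t)$ as a sum of two terms of the form $t^{1/2}(Eg_j)(x,\pm t)$, where
\begin{equation*}
Eg(x,t):=\int g(\xi)\,e^{2\pi i(x\cdot\xi+t|\xi|)}\,d\xi
\end{equation*}
is the extension operator for the light cone and $g_j(\xi):=|\xi|^{-1/2}\hat f_j(\xi)$. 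Since $E_1,E_2$ are separated and contained in the unit disk, the relevant range of $t$ is a bounded interval $[c,C]$, so the goal becomes the weighted cone estimate
\begin{equation*}
\int_c^C\!\!\int|Eg_j|^2\,d\mu_2(x)\,dt\;\lesssim\;R_j^{-\eta}.
\end{equation*}

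\emph{Step 3 (refined decoupling and tube counting).} Partition the annulus $|\xi|\sim R_j$ by the caps $\tau$ used to define $\mug$, giving $Eg_j=\sum_\tau Eg_{j,\tau}$, with each $Eg_{j,\tau}$ essentially a superposition of wave packets along the tubes $T\in\TT_{j,\tau}$. The naive approach of smoothing $\mu_2$ at scale $R_j^{-1/2}$ and using Plancherel only yields $\alpha>4/3$ (Wolff's threshold). To push down to $5/4$, I would apply Theorem \ref{refdec} at $L^6$ for the cone in $\RR^3$: it improves standard $\ell^2$-decoupling by replacing the worst-case number of wave packets per $R_j^{1/2}$-ball by the actual number $M$ of \emph{active} ones. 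The essential gain is that only good tubes appear, each carrying $\mu_2(T)\le R_j^{-1/2+100\delta}$; combined with the Frostman bound $\mu_2(B(x,R_j^{-1/2}))\lesssim R_j^{-\alpha/2}$, this caps $M$ well below the worst case. Plugging this count back into refined decoupling and using $\int_{|\xi|\sim R_j}|\hat f_j|^2\lesssim R_j^{2-\alpha}$ for the cap-side input, the exponents of $R_j$ combine into a power that is strictly negative, and therefore summable in $j$, exactly when $\alpha>5/4$ and $\delta$ is chosen small enough.

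\emph{Main obstacle.} The crux of the proof lies in Step 3: installing the refined $\ell^2$-decoupling for the cone against the fractal product weight $\mu_2\times dt$ in place of Lebesgue, converting the abstract $L^6$ bound on $Eg_j$ into an $L^2(\mu_2\times dt)$ bound by a fractal interpolation, and doing the wave-packet bookkeeping so that the tube non-concentration hypothesis $\mu_2(T)\le R_j^{-1/2+100\delta}$ is exploited sharply. A secondary technicality is controlling each cap piece $Eg_{j,\tau}$ on the right-hand side of decoupling by Frostman data alone, which should follow from $L^2$-orthogonality of wave packets together with the $\mu_1$-Frostman bound.
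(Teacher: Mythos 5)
Your proposal identifies the right architecture -- a Liu-type Fourier reduction, refined decoupling, exploitation of the good-tube condition $\mu_2(T)\le R_j^{-1/2+100\delta}$, and a fractal H\"older interpolation -- and the final numerology $\alpha>5/4$ does check out. The organization differs from the paper's mostly in the superficial matter of whether one groups scales dyadically (your Step 1) or works at each frequency radius $r$ directly and converts to the energy $I_\beta(\mu_1)$ at the end (as the paper does, via Proposition \ref{mug*sr} and the Fourier representation of $I_\beta$); either works.

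However, there is a substantive gap in Step 3. You stop the Fourier reduction at the cone extension in $(x,t)\in\RR^3$ and propose to apply Theorem \ref{refdec} to the light cone. But Theorem \ref{refdec} is stated (via the setup of Theorem \ref{dec}) for a \emph{strictly convex} hypersurface with Gaussian curvature $\sim 1$, and the light cone has a vanishing principal curvature, so the theorem does not apply as written; nor are its wave packets the right geometric objects for the cone -- the cone extension wave packets in $\RR^3$ are \emph{planks} of dimensions roughly $R_j^{-1}\times R_j^{-1/2}\times 1$ along the null direction, not the tubes that Theorem \ref{refdec} handles. The paper avoids this by completing Liu's reduction: applying Plancherel in $t$ (this is precisely Theorem \ref{Liu}, or the back half of the stationary-phase argument you begin in Step 2, which the paper writes out in Lemma \ref{LiuK} for general norms) to replace $\int_c^C|Eg(x,t)|^2\,dt$ with $\int_0^\infty|\mug*\hat\sigma_r(x)|^2\,r\,dr$. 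For each fixed $r$, $\mug*\hat\sigma_r$ has Fourier support on $S^1_r$, which is strictly convex, and Corollary \ref{correfdec} (the rescaled refined decoupling for the circle, with the 2D tubes $T\in\TT_{j,\tau}$) applies directly. You should insert this Plancherel-in-$t$ step; otherwise you owe a separate cone refined decoupling theorem, which the paper does not provide and which, if derived by the standard cone-to-sphere reduction, would anyway pass through a Plancherel along the null direction. The two technical items you flag (converting the $L^6(Y)$ bound to an $L^2(d\mu_2)$ bound via H\"older against $\|\mu_2*\eta_{1/r}\|_\infty^{1/2}$, and the wave-packet bookkeeping) are exactly the remaining content; they correspond to the paper's H\"older step and the double-counting Lemma \ref{mu2Y}, and a third item you do not mention, the near-orthogonality estimate $\sum_T\int|\widehat{M_T\mu_1}|^2\,d\sigma_r\lesssim r^{-1}\int|\hat\mu_1|^2\psi_r$, is needed to close the $L^2$-input side and requires a careful Plancherel argument of its own.
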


%Remark: I think that the value of the right hand side here depends only on the constants in the definition of $\mu_i$.

\begin{proof} [Proof of Theorem \ref{main} using Proposition \ref{mainest1} and Proposition \ref{mainest2}] The two propositions tell us that there is a point $x \in E_2$ so that 

\begin{equation} \label{L1close} \| d^x_*(\mu_1) - d^x_*(\mug) \|_{L^1} < 1/1000, \textrm{ and} \end{equation}

\begin{equation} \label{L2bound} \| d^x_*(\mug) \|_{L^2} < + \infty. \end{equation}  

Since $d^x_*(\mu_1)$ is a probability measure, (\ref{L1close}) guarantees that

$$ \int |d^x_*(\mug)| \ge 1 - \frac{1}{1000}. $$

\noindent Note that the support of $d^x_*(\mu_1)$ is contained in $\Delta_x(E)$.  Therefore

$$ \int_{\Delta_x(E)} | d^x_* \mug | = \int |d^x_* (\mug)| - \int_{\Delta_x(E)^c} |d^x_*(\mug)|$$

$$ \ge 1 - \frac{1}{1000} - \int |d^x_*(\mu_1) - d^x_*(\mug)| \ge 1 - \frac{2}{1000}. $$

But on the other hand, 

\begin{equation} \label{yes} \int_{\Delta_x(E)} | d^x_* \mug| \le | \Delta_x(E)|^{1/2} \left( \int |d^x_* \mug|^2 \right)^{1/2}. \end{equation}

Since (\ref{L2bound}) tells us that $\int |d^x_* \mug|^2$ is finite, it follows that $|\Delta_x(E)|$ is positive.  \end{proof}

To end this section, let us make some comments about the proofs of Proposition \ref{mainest1} and Proposition \ref{mainest2}.  To prove Proposition \ref{mainest1}, the first observation is that if $x$ is far from $T$, then removing $M_T \mu_1$ from $\mu_1$ has a negligible effect on the pushforward measure $d^x_* (\mu_1)$.  So the difference between $d^x_* \mu_1$ and $d^x_* \mug$ only comes from the bad tubes going through $x$.  Recall that a tube $T$ is bad if its $\mu_2$ measure is too large.  In general a point $x$ could lie in many bad tubes, and we need to know that the total $\mu_1$ measure of all these bad tubes is small (for most $x \in E_2$).  This follows from Orponen's radial projection theorem from \cite{O17b}.  This theorem plays an important role in Keleti and Shmerkin's work on the Falconer problem \cite{KS18}, which is where we learned about it.

To discuss Proposition \ref{mainest2}, we first describe the framework from \cite{Liu18}.  Let $\sigma_t$ denote the normalized arc length measure on the circle of radius $t$ (so the total measure is 1).  In \cite{Liu18}, Liu proved the following remarkable identity: for any function $f$,

$$ \int_0^\infty | f * \sigma_t (x)|^2 t dt = \int_0^\infty |f * \hat \sigma_r(x)|^2 r dr. $$

\noindent It follows from this identity that

$$ \int_{E_2} \| d^x_* (\mug) \|_{L^2}^2 d \mu_2(x) \lesssim \int_0^\infty \left( \int_{E_2} |\mug * \hat \sigma_r|^2 d\mu_2 (x) \right) r dr. $$

\noindent Now the Fourier transform of $\mug * \hat \sigma_r$ is supported on the circle of radius $r$, and studying such functions is the subject of restriction theory.  We can decompose $\mug * \hat \sigma_r$ as

$$ \mug * \hat \sigma_r = \sum_{T \textrm{ good}} M_T \mu_1 * \hat \sigma_r. $$

\noindent The right-hand side is essentially the wave packet decomposition of $\mug * \hat \sigma_r$.  This means that $M_T \mu_1 * \hat \sigma_r |_{B^2(1)}$ is essentially supported in $T$  and its Fourier transform is essentially supported in an arc of $S^1_r$ in the direction of $T$.  Since the tubes $T$ are all good, each tube $T$ has a small $\mu_2$ measure, and we will take advantage of this to bound the inner integral $\int_{E_2} | \mug * \hat \sigma_r |^2 d \mu_2(x)$.  

Since $T$ has a small $\mu_2$ measure, we can immediately get a good estimate for $\int_{E_2} | M_T \mu_1 * \hat \sigma_r|^2 d \mu_2$.  But to bound $\int_{E_2}| \mug* \hat \sigma_r|^2 d \mu_2$, we need to know how the wave packets $M_T \mu_1 * \hat \sigma_r$ interact with each other.  Is it possible that these wave packets have a lot of positive interference on the set $E_2$?  We will use decoupling theory to control such positive interference.  We will discuss this further in Section \ref{sec:RS}.

\section{Proof of Proposition \ref{mainest1}} \label{sec:mainest1}

We will study the pushforward measures $d^x_*(\mu_1)$ and $d^x_*(\mug)$.  Recall by definition that

$$ \int \psi(t) d^x_*(\mu) = \int \psi(|x-y|) d \mu(y). $$

In particular, if $\psi$ is the characteristic function of the interval $t_0 \le t \le t_0 + \Delta t$, then we see that

$$ \int_{t_0}^{t_0 + \Delta t} d^x_*(\mu) = \int_{t_0 \le |x-y| \le t_0 + \Delta t} d \mu. $$

To evaluate $d^x_*(\mu)$ at $t$, we can take the limit as $\Delta t \rightarrow 0$.  If we think of $\mu$ as $\mu(y) dy$, then we get

$$ d^x_*(\mu) (t) = \int_{S^1(x,t)} \mu(y) dl(y), $$

\noindent where $dl(y)$ denotes the arc length measure on the circle $S^1(x,t)$.  

To control $\| d^x_*(\mu_1) - d^x_*(\mug) \|_{L^1}$ we will start by studying $ d^x_*(M_T \mu_1)$ for different $T$.  For a tube $T \in \TT$, let $2T$ denote the concentric tube of twice the radius.  If $x \notin 2T$, we show that $d^x_*( M_T \mu_1)$ is negligible.

\begin{lemma} \label{xnotinT} If $T \in \TT_{j, \tau}$, and $x \in E_2$, and $x \notin 2T$, then
	
	$$ \| d^x_* (M_T \mu_1) \|_{L^1} \lesssim \RD(R_j). $$
\end{lemma}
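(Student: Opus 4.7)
The plan is a nonstationary-phase argument exploiting the wave-packet structure of $M_T\mu_1$. Recall that $M_T\mu_1 = \eta_T\cdot(\mu_1 * \check\psi_{j,\tau})$: it is physically supported in the tube $T$ and its Fourier transform is essentially the block $\tau$, whose long axis (of length $R_j$) is parallel to the long direction of $T$. The hypothesis $x\notin 2T$ should force the phase of the circular integral defining the pushforward to be nonstationary, and iterated integration by parts should then yield rapid decay.

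First I would parametrize the circle $S^1(x,t)$ by $y = x+te_\theta$ and insert Fourier inversion to write
\begin{equation*}
d^x_*(M_T\mu_1)(t) \;=\; t \int_{\tau} \psi_{j,\tau}(\xi)\,\hat\mu_1(\xi)\,e^{2\pi i x\cdot\xi}\, J(t,\xi)\,d\xi,
\qquad J(t,\xi) := \int_0^{2\pi}\eta_T(x+te_\theta)\,e^{2\pi i t\xi\cdot e_\theta}\,d\theta.
\end{equation*}
The phase derivative in $\theta$ is $-2\pi t\,\xi\cdot e_\theta^\perp$. The key geometric claim is that for every $\xi\in\tau$ and every $\theta$ with $y=x+te_\theta\in\supp(\eta_T)$, we have $|\xi\cdot e_\theta^\perp| \gtrsim R_j^{1/2+\delta}$. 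Geometrically, $\xi$ lies within angular distance $\sim R_j^{-1/2}$ of the long axis of $T$, whereas $(y-x)/|y-x|$ makes angle at least $\sim R_j^{-1/2+\delta}/t$ with that axis (because $x\notin 2T$ forces the perpendicular distance from $x$ to $T$ to be $\gtrsim R_j^{-1/2+\delta}$, while $|y-x|\lesssim 1$). The extra $R_j^\delta$ built into the tube thickness dominates the angular spread of $\tau$, and combining with $|\xi|\sim R_j$ yields the claim.

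Then I would iterate integration by parts in $\theta$. The amplitude $\eta_T(x+te_\theta)$ is smooth on $\theta$-scale $W \sim R_j^{-1/2+\delta}/t$, while the phase derivative has magnitude $K \gtrsim tR_j^{1/2+\delta}$, so $KW \sim R_j^{2\delta}\gg 1$. Each IBP gains a factor $(KW)^{-1} \sim R_j^{-2\delta}$, so after $N$ iterations $|J(t,\xi)| \lesssim (R_j^{-1/2+\delta}/t)\cdot R_j^{-2N\delta}$. Plugging this back and using $|\hat\mu_1|\le 1$, $|\tau|\sim R_j^{3/2}$, and that $t$ ranges over an interval of length $\lesssim 1$, I obtain $\|d^x_*(M_T\mu_1)\|_{L^1} \lesssim R_j^{1+\delta-2N\delta}$, which is $\RD(R_j)$ upon taking $N$ large.

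The main obstacle I expect is the geometric lower bound $|\xi\cdot e_\theta^\perp|\gtrsim R_j^{1/2+\delta}$: it requires carefully comparing the angular width of the frequency block $\tau$ (of order $R_j^{-1/2}$) against the angular separation forced by the hypothesis $x\notin 2T$ (of order $R_j^{-1/2+\delta}/t$). The slack $R_j^\delta$ built into the tube thickness is precisely what makes these two angular scales separable and drives the nonstationary-phase gain per integration by parts; losing this slack would destroy the argument.
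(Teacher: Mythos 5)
Your proof is correct and rests on the same mechanism as the paper's: write $M_T\mu_1$ via Fourier inversion, reduce to a circular oscillatory integral, and observe that $x\notin 2T$ together with the extra $R_j^{\delta}$ in the tube thickness makes the phase non\nobreakdash-stationary, so that iterated integration by parts gains $R_j^{-2\delta}$ per step. The one genuine difference is the parametrization. The paper writes $T\cap S^1(x,t)$ as a graph $y_2=h(y_1)$ after a rotation that puts $\omega$ along a coordinate axis, and this requires a two-case split (whether or not $T$ meets $S^1(x,t/2)$) to guarantee that $h$ and its derivatives stay bounded, together with a preliminary reduction to $t\sim 1$ so that the bound $|h'|\gtrsim R_j^{-1/2+\delta}$ is uniform. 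You parametrize directly by the angle $\theta$ on $S^1(x,t)$, so the phase derivative $-2\pi t\,\xi\cdot e_\theta^\perp$ is intrinsically defined, and your geometric lower bound $|\xi\cdot e_\theta^\perp|\gtrsim R_j^{1/2+\delta}$ holds uniformly in $t$ on the support of $\eta_T(x+te_\theta)$ (which automatically forces $t\gtrsim R_j^{-1/2+\delta}$). This removes both the case distinction and the $t\sim 1$ reduction, which is a small but genuine simplification. One inaccuracy to flag: the $\theta$-measure of the amplitude's support is \emph{not} always $\lesssim R_j^{-1/2+\delta}/t$ --- if $S^1(x,t)$ grazes $T$ near a tangency the arc length is $\sim(tR_j^{-1/2+\delta})^{1/2}$, giving $\theta$-measure $\sim(R_j^{-1/2+\delta}/t)^{1/2}$, and similarly the crossing angle enters when the circle passes through $T$ transversally but obliquely. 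This only affects the intermediate power of $R_j$ (the crude bound $|\Theta|\le 2\pi$ already suffices), so the final rapid-decay conclusion is unharmed, but the displayed bound $\|d^x_*(M_T\mu_1)\|_{L^1}\lesssim R_j^{1+\delta-2N\delta}$ should really be stated with a cruder constant in front. One should also confirm, as you implicitly do, that the higher phase derivatives $|\phi^{(k)}|\lesssim t|\xi|$ satisfy $|\phi^{(k)}/(\phi')^k|\lesssim R_j^{-2(k-1)\delta}\cdot R_j^{-2\delta}$-type bounds so that iterated integration by parts closes; this works out because $|\phi'|\gtrsim R_j^{1/2+\delta}$ uniformly.
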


\begin{proof} 
We will prove the stronger estimate that for every $t$:
	
	$$  d^x_* (M_T \mu_1) (t)  \lesssim \RD(R_j). $$

Recall that

\begin{equation} \label{dxMT} d^x_*  (M_T \mu_1)(t) = \int_{S^1(x,t)} M_T \mu_1(y) dl(y). \end{equation}

We also recall that 

$$ M_T \mu_1 = \eta_T (\psi_{j, \tau} \hat \mu_1)^\vee = \eta_T (\psi_{j, \tau}^\vee * \mu_1). $$

Now $\psi_{j, \tau}^\vee$ is concentrated on a $R_j^{-1/2} \times R_j^{-1}$ rectangle centered at 0 and it decays rapidly outside that rectangle.  Since $x \in E_2$, the distance from $x$ to the support of $\mu_1$ is $\gtrsim 1$.  Therefore, $d^x_* M_T \mu_1(t)$ is tiny unless $t \sim 1$.

To study the case when $t \sim 1$, we expand out $M_T \mu_1$:

$$ M_T \mu_1 (y) = \eta_T(y) ( \psi_{j, \tau} \hat \mu_1 )^{\vee}(y)  = \eta_T(y)  \int e^{2 \pi i \omega y} \psi_{j, \tau}(\omega) \hat \mu_1( \omega) d \omega. $$

Since $| \hat \mu_1(\omega) | \le 1$, and $\psi_{j, \tau}(\omega)$ is supported on $\tau$ and bounded by 1, it suffices to check that for each $\omega \in \tau$, 

\begin{equation} \label{statphase} \int_{S^1(x,t)} \eta_T(y) e^{2 \pi i \omega y} dl(y) \le \RD(R_j). \end{equation}

We will prove this rapid decay by stationary phase.  There are two slightly different cases, depending on whether $T$ intersects $S^1(x, t/2)$ or not.  Let us start with the case that $T$ intersects $S^1(x, t/2)$, since this case is a little harder.  After a coordinate rotation, we can assume that $\omega$ has the form $(0, \omega_2)$ with $\omega_2 \sim R_j$.  Recall that a tube $T \in \TT_{j, \tau}$ has long axis in the direction of the center of $\tau$. In particular, our tube $T$ must be nearly vertical, up to an angle of $R_j^{-1/2}$.   The tube $T$ intersects $S^1(x,t)$ in two arcs, which we deal with one at a time.  Each arc is a graph of the form $ y_2 = h(y_1)$, where $y_1$ lies in an interval $I(T)$ of length $\sim R_j^{-1/2 + \delta}$.   Since $T$ intersects $S^1(x,t/2)$, and the tube $T$ is nearly vertical, the function $h$ and all its derivatives are $\lesssim 1$ on $I(T)$.  

The following point is crucial for stationary phase.  Since $T$ is within an angle $R_j^{-1/2}$ of vertical, and $x \notin 2T$, then the distance from $T$ to the top or bottom points of the circle is $\gtrsim R_j^{-1/2 + \delta}$, and so we get 

$$ |h'(y_1)| \gtrsim R_j^{-1/2 + \delta} \textrm{ on the interval } I(T).$$

\noindent  In these coordinates, our integral becomes

$$ \int_{I(T)} \eta_T(y_1, h(y_1)) e^{2 \pi i \omega_2 h(y_1)} J(y_1) dy_1, $$

\noindent where $J(y_1)$ is the Jacobian factor that relates the arclength on the circle to $dy_1$. Since $h$ and all its derivatives are $\lesssim 1$ on $I(T)$, the same applies to $J$.  The function $\eta_T$ is smooth at scale $R_j^{-1/2 + \delta}$, and so if we abbreviate $\eta(y_1) := \eta_T(y_1, h(y_1)) J(y_1)$, then $\eta$ obeys 

$$ | \eta^{(k)} | \lesssim (R_j^{1/2 - \delta})^k ,$$

\noindent and $\eta$ is supported on $I(T)$.  We let $\phi(y_1) = 2 \pi \omega_2 h(y_1)$.  We now have to bound the following integral:

$$ \int_{I(T)} \eta(y_1) e^{ i \phi(y_1)} dy_1. $$

This integral can be bounded using stationary phase.  The method is essentially the same as in \cite{St}, Chapter 8, Proposition 1.  Here is a sketch.  We note that on $I(T)$, 

$$ | \phi'(y_1)| = |\omega_2| |h'(y_1)| \gtrsim R_j^{1/2 + \delta}, \textrm{ and }$$

$$ | \phi^{(k)} (y_1)| = | \omega_2| |h^{(k)}(y_1)| \lesssim R_j. $$

Next we note that

$$ \frac{1}{i \phi'} \frac{d}{dy_1} e^{i \phi(y_1)} = e^{ i \phi(y_1)}. $$

We define $D =  \frac{1}{i \phi'} \frac{d}{dy_1} $, so our integral becomes $\int_{I(T)} \eta D^N e^{i \phi} dy_1$, where $N$ is an arbitrary integer.  Next we expand out $D^N e^{i \phi}$ and we integrate by parts many times so that none of the derivatives actually lands on $e^{i \phi}$.  Using our lower bound on $|\phi'|$ and our upper bounds on the higher derivatives of $\phi$ and the derivatives of $\eta$, it follows that our integral is bounded by $C_N R_j^{- 2 \delta N}$.  For instance, if all the derivatives land on $\eta$, then we get a bound on $(R_j^{1/2-\delta})^N (R_j^{1/2 + \delta})^{-N} \sim R_j^{- 2 \delta N}$, and this is the worst case.  Since $N$ is arbitrary we get the desired bound.  

If $T$ does not intersect $S^1(x, t/2)$, then we choose our coordinates differently so that we can still arrange that $|h'|$ is bounded.  This time, we rotate so that $\omega = (\omega_1, 0)$, where $\omega_1 \sim R_j$.  The tube $T$ intersects $S^1(x,t)$ in one or two arcs, and each arc is a graph of the form $y_2 = h(y_1)$ over an interval $I(T)$, and $h$ and all its derivatives are $\lesssim 1$ on $I(T)$.  Our integral now has the form

$$ \int_{I(T)} \eta_T(y_1, h(y_1)) J(y_1) e^{2 \pi i \omega_1 y_1} dy_1. $$

\noindent Since $\omega_1 \sim R_j$, and $\eta_T$ is smooth on the scale $R_j^{-1/2 + \delta}$, this integral can also be bounded by stationary phase (in fact more simply than in the other case).  

\end{proof}

Next we prove a simple bound to cover the case that $x \in 2T$.

\begin{lemma} \label{MTfL1} For any $T \in \TT_{j, \tau}$ and any function $f$ supported in the unit disk,
	
	$$ \| M_T f \|_{L^1} \lesssim \| f \|_{L^1 (2T)} + \RD(R_j) \| f \|_{L^1}. $$ 
\end{lemma}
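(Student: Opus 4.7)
The plan is to split $f$ into its restriction to $2T$ and the rest, and to handle the two pieces with different uses of the wave packet structure. Write $f_1 = f \cdot \mathbf{1}_{2T}$ and $f_2 = f - f_1 = f \cdot \mathbf{1}_{(2T)^c}$, so that $M_T f = \eta_T (\psi_{j, \tau}^\vee * f_1) + \eta_T (\psi_{j,\tau}^\vee * f_2)$. The claim will follow once we show $\| M_T f_1 \|_{L^1} \lesssim \| f \|_{L^1(2T)}$ and $\| M_T f_2 \|_{L^1} \lesssim \RD(R_j) \| f \|_{L^1}$.

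For the first piece, the key observation is that $\psi_{j, \tau}$ is a smooth bump bounded by $1$ supported on a rectangle of dimensions $\sim R_j^{1/2} \times R_j$, so up to a modulation $\psi_{j, \tau}^\vee$ is the inverse Fourier transform of a bump on a box of those dimensions centered at the origin. Standard Fourier calculations (peak height $\sim R_j^{3/2}$, concentrated on the dual box of dimensions $R_j^{-1/2}\times R_j^{-1}$, rapid Schwartz decay outside) give $\| \psi_{j, \tau}^\vee \|_{L^1} \lesssim 1$. Combined with $\| \eta_T \|_{L^\infty} \lesssim 1$ and Young's inequality, we get
$$ \| M_T f_1 \|_{L^1} \le \| \eta_T \|_{L^\infty} \| \psi_{j, \tau}^\vee * f_1 \|_{L^1} \le \| \psi_{j, \tau}^\vee \|_{L^1} \| f_1 \|_{L^1} \lesssim \| f \|_{L^1(2T)}. $$

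For the second piece, the point is that the separation between $T$ and the support of $f_2$, measured in the short direction of $T$ (which coincides with the long direction of the dual rectangle of $\tau$), is $\gtrsim R_j^{-1/2 + \delta}$, i.e., $R_j^{\delta}$ times the dual scale $R_j^{-1/2}$ in that direction. By Fubini,
$$ \| M_T f_2 \|_{L^1} \le \int_{(2T)^c} |f(y)| \Bigl( \int_T |\psi_{j, \tau}^\vee(z-y)| \, dz \Bigr) \, dy. $$
For fixed $y \in (2T)^c$, the set $T - y$ lives in the region $|u_1| \gtrsim R_j^{-1/2 + \delta}$ where $u_1$ is the coordinate along the short axis of $T$. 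Using the Schwartz decay bound $|\psi_{j, \tau}^\vee(u)| \lesssim R_j^{3/2} (1+R_j^{1/2}|u_1|)^{-N} (1+R_j|u_2|)^{-N}$ for any $N$, the $u_2$-integral contributes $R_j^{-1}$, while the constraint $|u_1| \gtrsim R_j^{-1/2+\delta}$ forces the $u_1$-integral to be $\lesssim R_j^{-1/2} R_j^{-\delta(N-1)}$. The net factor of $R_j^{-\delta(N-1)}$ is $\RD(R_j)$ since $N$ is arbitrary. Hence $\int_T |\psi_{j, \tau}^\vee(z-y)| \, dz \lesssim \RD(R_j)$ uniformly in $y \in (2T)^c$, and plugging this back yields $\| M_T f_2 \|_{L^1} \lesssim \RD(R_j) \| f \|_{L^1}$, as desired.

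The proof is essentially a bookkeeping exercise in Fourier-analytic wave packet estimates, and there is no real obstacle. The only subtlety worth flagging is making sure the orientation of the dual rectangle is tracked correctly: the rapid decay we exploit for $f_2$ must be in the short direction of $T$, which is the long direction of the dual of $\tau$, precisely where a distance of $R_j^{-1/2+\delta}$ beats the $R_j^{-1/2}$ dual scale by a factor of $R_j^\delta$.
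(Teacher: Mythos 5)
Your proof is correct and follows essentially the same route as the paper: both rest on $\|\psi_{j,\tau}^\vee\|_{L^1}\lesssim 1$ together with the fact that $\psi_{j,\tau}^\vee$ concentrates on the dual box of $\tau$ (dimensions $R_j^{-1/2}\times R_j^{-1}$) while $T$ is thicker by a factor $R_j^\delta$ in that direction, so the convolution restricted to $T$ sees only the part of $f$ living in $2T$, up to a rapidly decaying tail. Your explicit decomposition $f=f\mathbf{1}_{2T}+f\mathbf{1}_{(2T)^c}$ is just a spelled-out version of the paper's one-line estimate $\int_T|\psi_{j,\tau}^\vee*f|\lesssim\int_{2T}|f|+\RD(R_j)\|f\|_{L^1}$.
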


\begin{proof} Recall that for a tube $T \in \TT_{j, \tau}$, we defined $M_T$ by
		
	$$ M_T f := \eta_T (\psi_{j, \tau} \hat f)^{\vee} = \eta_T( \psi_{j, \tau}^\vee * f). $$
	
Now $\psi_{j, \tau}^\vee$ is essentially supported in a rectangle of dimensions $R_j^{-1} \times R_j^{-1/2}$ and $\| \psi_{j, \tau} \|_{L^1} \lesssim 1$.  Since the thickness of $T$ is $R_j^{-1/2 + \delta}$, we get

$$ \int |M_T f| \lesssim \int_{T} |\psi_{j, \tau}^\vee * f| \lesssim \int_{2T} |f| + \RD(R_j) \| f\|_{L^1}. $$
	
\end{proof}

\begin{corollary} \label{MTmuL1} For any point $x$, and any tube $T \in \TT$,
	
	$$\| d^x_* (M_T \mu_1) \|_{L^1} \lesssim \mu_1(2T) + \RD(R_j).$$
\end{corollary}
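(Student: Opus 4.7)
The plan is a short case analysis on whether $x \in 2T$ or $x \notin 2T$, stitching together Lemma \ref{xnotinT} and Lemma \ref{MTfL1}.

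First, if $x \notin 2T$, then Lemma \ref{xnotinT} already gives the much stronger bound $\| d^x_*(M_T \mu_1) \|_{L^1} \lesssim \RD(R_j)$, which trivially implies the desired estimate.

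Second, suppose $x \in 2T$. The idea is to use the elementary fact that the pushforward of a signed measure under any measurable map does not increase its total variation norm, and in particular $\| d^x_*(\nu) \|_{L^1(\RR)} \le \| \nu \|_{L^1(\RR^2)}$ for any $\nu$ with an $L^1$ density. Applying this with $\nu = M_T \mu_1$ and then invoking Lemma \ref{MTfL1} with $f = \mu_1$, using that $\mu_1$ is a probability measure so that $\| \mu_1 \|_{L^1} = 1$ and $\| \mu_1 \|_{L^1(2T)} = \mu_1(2T)$, gives
$$ \| d^x_*(M_T \mu_1) \|_{L^1} \le \| M_T \mu_1 \|_{L^1} \lesssim \mu_1(2T) + \RD(R_j). $$
Combining the two cases yields the claimed estimate.

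The only mild technicality is that Lemma \ref{MTfL1} is stated for a function $f$, while $\mu_1$ is a Frostman measure. I would handle this in a routine way: convolve $\mu_1$ with a smooth mollifier at a scale $\eta$ much finer than $R_j^{-1}$, apply Lemma \ref{MTfL1} and the pushforward inequality to the mollified measure, and pass to the limit $\eta \to 0$. Since $M_T$ is morally a Fourier multiplier supported at frequency scale $R_j$, the mollification commutes with $M_T$ up to an error captured by the $\RD(R_j)$ term, and all the quantities $\mu_1(2T)$, $\| \mu_1 \|_{L^1}$, and $\| d^x_* (M_T \mu_1) \|_{L^1}$ are stable under this limit. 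There is essentially no obstacle here: the corollary is just the formal concatenation of the two preceding lemmas.
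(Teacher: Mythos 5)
Your proof is correct and matches the paper's (implicit) reasoning: the corollary is stated without proof, and the intended argument is precisely the chain $\| d^x_*(M_T \mu_1) \|_{L^1} \le \| M_T \mu_1 \|_{L^1} \lesssim \mu_1(2T) + \RD(R_j)$, i.e., the pushforward does not increase the $L^1$ norm, followed by Lemma \ref{MTfL1} with $\|\mu_1\|_{L^1}=1$. One small remark: the case analysis on $x \in 2T$ versus $x \notin 2T$ is unnecessary, since your second argument makes no use of the hypothesis $x \in 2T$ and already handles all $x$; invoking Lemma \ref{xnotinT} in the first case is harmless but redundant.
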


Next we check carefully that $\mu_1$ is very close to $M_0 \mu_1 + \sum_{T \in \TT} M_T \mu_1$.

\begin{lemma} \label{f=MTf} For any function $L^1$ function $f$ supported in the unit disk
	$$\| f - M_0 f - \sum_{T\in \TT} M_T f \|_{L^1} \lesssim \RD(R_0) \| f \|_{L^1}. $$ 
\end{lemma}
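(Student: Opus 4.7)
The plan is to compute $f - M_0 f - \sum_{T \in \TT} M_T f$ explicitly using the two partitions of unity. Writing $g_{j, \tau} := (\psi_{j, \tau} \hat f)^\vee = \psi_{j, \tau}^\vee * f$, the frequency partition gives $f = (\psi_0 \hat f)^\vee + \sum_{j \ge 1, \tau} g_{j, \tau}$ exactly. On the physical side, set $\chi_{j, \tau} := \sum_{T \in \TT_{j, \tau}} \eta_T$, which equals $1$ on the disk of radius $2$ by construction. Then $\sum_{T \in \TT_{j, \tau}} M_T f = \chi_{j, \tau}\, g_{j, \tau}$, and subtracting produces the clean identity
$$ f - M_0 f - \sum_{T \in \TT} M_T f = \sum_{j \ge 1, \tau} (1 - \chi_{j, \tau})\, g_{j, \tau}. $$

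Next I would bound each summand in $L^1$. Since $1 - \chi_{j, \tau}$ vanishes on the disk of radius $2$ while $f$ is supported in the unit disk, the only contributions come from values of $g_{j, \tau} = \psi_{j, \tau}^\vee * f$ at distance $\ge 1$ from the support of $f$. The kernel $\psi_{j, \tau}^\vee$ is essentially concentrated on the rectangle of dimensions $R_j^{-1/2} \times R_j^{-1}$ about the origin (dual to the $R_j^{1/2} \times R_j$ block $\tau$), with Schwartz-type decay outside. In particular,
$$ \int_{|z| \ge 1} |\psi_{j, \tau}^\vee(z)|\, dz \lesssim \RD(R_j). $$
Applying Fubini,
$$ \| (1 - \chi_{j, \tau})\, g_{j, \tau} \|_{L^1} \le \int |f(x)| \int_{|y - x| \ge 1} |\psi_{j, \tau}^\vee(y - x)|\, dy\, dx \lesssim \RD(R_j)\, \|f\|_{L^1}. $$

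Finally, I would sum over $j$ and $\tau$. The annulus $R_{j-1} \le |\omega| \le R_j$ is covered by only $\lesssim R_j^{1/2}$ blocks $\tau$, which is easily absorbed by $\RD(R_j)$. Since $R_j = 2^j R_0$, the outer sum is geometric and dominated by its first term, yielding $\sum_{j \ge 1, \tau} \RD(R_j)\, \|f\|_{L^1} \lesssim \RD(R_0)\, \|f\|_{L^1}$. There is no serious obstacle here; the only point requiring a little care is that the physical partition $\{\eta_T\}$ sums to $1$ only on the disk of radius $2$, which is precisely what forces the error to be driven by the rapid-decay tails of $\psi_{j, \tau}^\vee$ and thus to produce the factor $\RD(R_0)$.
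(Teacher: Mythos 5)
Your argument is correct and follows essentially the same route as the paper: you expand $f - M_0 f - \sum_T M_T f$ using both partitions of unity to get $\sum_{j,\tau}(1-\chi_{j,\tau})(\psi_{j,\tau}^\vee*f)$, observe that $1-\chi_{j,\tau}$ vanishes on the disk of radius $2$ while $f$ lives in the unit disk, and then use the rapid decay of $\psi_{j,\tau}^\vee$ outside an $R_j^{-1/2}\times R_j^{-1}$ rectangle to conclude. You spell out the Fubini step and the summation over $\tau$ and $j$ more explicitly than the paper does, but the underlying mechanism is identical.
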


\begin{proof} Recall that $\{ \psi_{j, \tau} \}$ is a partition of unity.  We define
	$M_{j, \tau} f = (\psi_{j, \tau} \hat f)^{\vee}$, so that $f = \sum_{j, \tau} M_{j, \tau} f$.    It suffices to bound
	
	$$ \| M_{j, \tau} f - \sum_{T \in \TT_{j, \tau}} M_T f \|_{L^1} \lesssim \RD(R_j) \| f \|_{L^1}. $$
	
The left hand side is

$$ \| (1 - \sum_{T \in \TT_{j, \tau}} \eta_T) (\psi_{j, \tau}^\vee * f) \|_{L^1}. $$

Now as we noted in the proof of Lemma \ref{MTfL1}, $\psi_{j, \tau}^\vee$ is essentially supported on an $R_j^{-1/2} \times R_j^{-1}$ rectangle.  Also, $\sum_{T \in \TT_{j, \tau}} \eta_T$ is equal to one on the disk of radius 2 and then decays outside it.  Since $f$ is supported in the unit disk, $\psi_{j, \tau}^\vee * f$ is essentially supported in the disk of radius 2, and we get the desired rapid decay. \end{proof}

Now we can relate $ \| d^x_*(\mug) - d^x_*(\mu_1) \|_{L^1}$ to the geometry of the bad rectangles.  For each point $x$ and each $j$, we define

$$ \Bad_j(x) := \bigcup_{T \in \TT_j: x \in 2T \textrm{ and $T$ is bad}} 2T. $$

\begin{lemma} For any point $x$ in $E_2$,
	
	$$  \| d^x_*(\mug) - d^x_*(\mu_1) \|_{L^1} \lesssim \sum_{j \ge 1} \mu_1 ( \Bad_j(x)) + \RD(R_0). $$
\end{lemma}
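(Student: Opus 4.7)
The plan is to decompose $\mu_1-\mug$ using Lemma \ref{f=MTf}, apply $d^x_*$, and estimate each bad-tube contribution by splitting on whether $x\in 2T$.

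Because $|d^x_*(f)(t)|\le d^x_*(|f|)(t)$ pointwise for a signed density $f$, we have the contraction $\|d^x_*(f)\|_{L^1}\le\|f\|_{L^1}$. By Lemma \ref{f=MTf} together with the definition of $\mug$,
$$ \mu_1-\mug=\sum_{T\in\TT,\,T\text{ bad}}M_T\mu_1+e,$$
with $\|e\|_{L^1}\lesssim\RD(R_0)$, so the error contributes at most $\RD(R_0)$ after pushforward. The triangle inequality then reduces matters to bounding
$$\sum_{j\ge 1}\sum_{T\in\TT_j,\,T\text{ bad}}\|d^x_*(M_T\mu_1)\|_{L^1}.$$
If $x\notin 2T$, Lemma \ref{xnotinT} gives $\|d^x_*(M_T\mu_1)\|_{L^1}\lesssim\RD(R_j)$; since $|\TT_j|\le R_j^{O(1)}$, the contribution at scale $j$ is $\RD(R_j)$, and summing the geometric series in $R_j=2^jR_0$ keeps this in $\RD(R_0)$. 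If $x\in 2T$, Corollary \ref{MTmuL1} gives $\|d^x_*(M_T\mu_1)\|_{L^1}\lesssim\mu_1(2T)+\RD(R_j)$, and the rapid-decay part again absorbs into $\RD(R_0)$.

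What remains is to show, for each $j$, that
$$\sum_{T\in\TT_j,\,T\text{ bad},\,x\in 2T}\mu_1(2T)\lesssim\mu_1(\Bad_j(x)).$$
Rewriting the left side as $\int\#\{T\in\TT_j:T\text{ bad},\,x,y\in 2T\}\,d\mu_1(y)$, the integrand is supported in $\Bad_j(x)$ by definition, so it suffices to control the multiplicity of tubes containing both $x$ and $y$. Since $x\in E_2$ and $y\in\supp\mu_1\subset E_1$ have $|x-y|\gtrsim 1$, any tube of thickness $R_j^{-1/2+\delta}$ containing both points must have direction within $O(R_j^{-1/2+\delta})$ of $y-x$: only $O(R_j^\delta)$ angular blocks $\tau$ meet this cone, and within each $\tau$ at most $O(1)$ tubes of $\TT_{j,\tau}$ contain $x$. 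Summing the resulting scale-by-scale bound over $j$ completes the argument.

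The hard part will be the multiplicity bound just described: tubes from different angular blocks $\tau$ could in principle accumulate at a single pair $(x,y)$, and the argument gives an $R_j^{O(\delta)}$ loss which I regard as absorbed by $\lesssim$ (the free parameter $\delta$ is taken small in the downstream application, where $\mu_1(\Bad_j(x))$ will be further controlled by a radial-projection estimate). Everything else is the $L^1$ contraction of pushforwards, the triangle inequality, and the two previously established estimates (Lemma \ref{xnotinT} and Corollary \ref{MTmuL1}).
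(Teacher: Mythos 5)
Your proof follows the same route as the paper's: decompose via Lemma \ref{f=MTf}, reduce to the bad tubes by the $L^1$ contraction of the pushforward, split on $x\in 2T$ versus $x\notin 2T$ (invoking Corollary \ref{MTmuL1} and Lemma \ref{xnotinT} respectively), and then pass from $\sum_T \mu_1(2T)$ to $\mu_1(\Bad_j(x))$ by a multiplicity bound for tubes through the separated pair $(x,y)$. One small remark: you obtain multiplicity $O(R_j^\delta)$ (since the $\tau$-blocks have angular width $\sim R_j^{-1/2}$ while the tubes have thickness $R_j^{-1/2+\delta}$), whereas the paper asserts the multiplicity is $\lesssim 1$; strictly speaking an $R_j^{O(\delta)}$ factor cannot be hidden in a $\lesssim$ that is uniform in $j$, so the lemma as literally stated should carry that factor, but it is harmless because $\mu_1(\Bad_j(x))$ decays like $R_j^{-c(\alpha)\delta}$ with $c(\alpha)$ coming from the $100\delta$ badness threshold, so the geometric sum over $j$ still converges once $R_0$ is large. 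Your explicit acknowledgment of this loss is the only substantive difference from the paper's proof.
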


\begin{proof} Recall that $\mug$ is defined by
	
	$$ \mug := M_0 \mu_1 + \sum_{T \in \TT, T \textrm{ good}} M_T \mu_1. $$
	
Using Lemma \ref{f=MTf}, we see that

	$$  \| d^x_*(\mug) - d^x_*(\mu_1) \|_{L^1} \lesssim \sum_{j} \sum_{T \in \TT_j, T \textrm{ bad}} \| d^x_*(M_T \mu_1) \|_{L^1} + \RD(R_0). $$
	
If $x \in 2T$, then we apply Corollary \ref{MTmuL1}, and if $x \notin 2T$, then we apply Lemma \ref{xnotinT}.  We get

	$$  \| d^x_*(\mug) - d^x_*(\mu_1) \|_{L^1} \lesssim \sum_{j} \sum_{T \in \TT_j, x \in 2T, T \textrm{ bad}} \mu_1(2T) + \RD(R_0). $$

Since the distance from $E_2$ to $E_1$ is $\gtrsim 1$, each point of $E_1$ is contained in $2T$ for $\lesssim 1$ tube $T \in \TT_j$ with $x \in 2T$.  Therefore, the right hand side is

$$ \lesssim \sum_j \mu_1 \left(\bigcup_{T \in \TT_j, x \in 2T, T \textrm{ bad}} 2T \right)+ \RD (R_0) = \sum_j \mu_1( \Bad_j(x) ) + \RD(R_0). $$

\end{proof}

Next we need to estimate the measure of $\Bad_j(x)$.  We will do this using Orponen's radial projection theorem.  Before introducing the theorem, we need to set up a little more notation.

$$ \Bad_j := \{ (x_1, x_2): \textrm{ there is a bad } T \in \TT_j \textrm{ so that } 2T \textrm{ contains } x_1 \textrm{ and } x_2 \}. $$

Notice that $\Bad_j(x)$ is just the set of $y$ so that $(y,x) \in \Bad_j$.  Therefore, 

$$ \mu_1 \times \mu_2 (\Bad_j) = \int \mu_1(\Bad_j(x)) d\mu_2(x). $$

Our main estimate about the bad rectangles is

\begin{lemma} \label{badrect} For each $\alpha > 1$, there is a constant $c(\alpha) > 0$ so that for each $j \ge 1$,
	$$ \mu_1 \times \mu_2(\Bad_j) \lesssim R_j^{- c(\alpha) \delta}. $$
\end{lemma}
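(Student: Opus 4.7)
The plan is to reformulate the assertion as a fact about the radial projection of $\mu_2$ from a typical source $y\in E_1$, and then apply Orponen's $L^p$ theorem on radial projections. First I unfold $\mu_1\times\mu_2(\Bad_j)=\int_{E_1}\mu_2(\Bad_j(y))\,d\mu_1(y)$ and fix $y\in E_1$. Let $\mathcal{B}(y):=\{T\in\TT_j : T \text{ bad},\ y\in 2T\}$. Because $\mathrm{dist}(E_1,E_2)\gtrsim 1$, every $x\in 2T\cap\supp\mu_2$ satisfies $|y-x|\gtrsim 1$, hence the direction $\pi_y(x)=(x-y)/|x-y|$ lies in an arc $I_T\subset S^1$ of angular length $\lesssim R_j^{-1/2+\delta}$ centred on the direction of $\tau$. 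Since through each $y$ there is essentially one tube of $\TT_{j,\tau}$ per direction and consecutive directions $\tau$ are $\sim R_j^{-1/2}$ apart, the arcs $\{I_T\}_{T\in\mathcal B(y)}$ overlap with multiplicity $\lesssim R_j^\delta$. Combining these two observations,
$$ \mu_2(\Bad_j(y))\ \le\ \sum_{T\in\mathcal B(y)}\mu_2(2T)\ \le\ \sum_{T\in\mathcal B(y)}\pi_y\mu_2(I_T)\ \lesssim\ R_j^\delta\,\pi_y\mu_2(U), \qquad U:=\bigcup_{T\in\mathcal B(y)} I_T. $$

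Next I exploit the badness of $T$. Since $\pi_y\mu_2(I_T)\ge \mu_2(T)\ge R_j^{-1/2+100\delta}$ while $|I_T|\sim R_j^{-1/2+\delta}$, the average density of $\pi_y\mu_2$ on each $I_T$ exceeds $R_j^{99\delta}$. A standard splitting (into density $\ge \frac12 R_j^{99\delta}$ and density $<\frac12 R_j^{99\delta}$) shows that at least half of $\pi_y\mu_2(I_T)$ is contributed by the high-density set $H:=\{\theta:\tfrac{d\pi_y\mu_2}{dm}(\theta)\ge \tfrac12 R_j^{99\delta}\}$. Using the pointwise bound $f\le (2f/R_j^{99\delta})^{p-1}f$ on $H$ together with the overlap bound gives
$$ \pi_y\mu_2(U)\ \lesssim\ \pi_y\mu_2(H)\ \lesssim\ R_j^{-99\delta(p-1)}\,\|\pi_y\mu_2\|_{L^p(S^1)}^p $$
for any $p\ge 1$. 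Plugging this into the previous display,
$$ \mu_2(\Bad_j(y))\ \lesssim\ R_j^{\delta-99\delta(p-1)}\,\|\pi_y\mu_2\|_{L^p(S^1)}^p. $$

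Finally I integrate in $y$ and invoke Orponen's radial projection theorem from \cite{O17b}, which in the form used in \cite{KS18} asserts that when $\alpha>1$ there exist $p=p(\alpha)>1$ and $C=C(\alpha)<\infty$ with $\int_{E_1}\|\pi_y\mu_2\|_{L^p(S^1)}^p\,d\mu_1(y)\le C$, applied to the pair of Frostman measures $\mu_1,\mu_2$ on separated sets. This yields
$$ \mu_1\times\mu_2(\Bad_j)\ \lesssim\ R_j^{\delta(1-99(p-1))}\ =\ R_j^{-c(\alpha)\delta}, $$
with $c(\alpha):=99(p(\alpha)-1)-1$. The constant $100$ in the threshold defining ``bad'' is a placeholder: enlarging it from $100$ to any $N$ large enough relative to $1/(p(\alpha)-1)$ guarantees $c(\alpha)>0$.

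The main obstacle is the correct quantitative form of Orponen's radial-projection $L^p$-bound, namely $\int\|\pi_y\mu_2\|_{L^p(S^1)}^p\,d\mu_1(y)\lesssim 1$ for some $p=p(\alpha)>1$ depending on $\alpha>1$; this is precisely the ingredient from \cite{O17b} that converts the $1$-dimensional non-concentration of $E$ into true $L^p$ gain. The rest of the argument is routine geometric bookkeeping (widths and overlap multiplicity of the arcs $I_T$) together with a Chebyshev-type tail inequality promoting the large average density on each $I_T$ into global integrability.
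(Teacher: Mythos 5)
Your proposal is correct in spirit and rests on the same essential ingredient as the paper's proof, namely Orponen's $L^p$ radial projection theorem from \cite{O17b}. The high-level reduction --- unfold $\mu_1\times\mu_2(\Bad_j)=\int\mu_2(\Bad_j(y))\,d\mu_1(y)$, project from $y$, use that each bad tube contributes an arc of length $\sim R_j^{-1/2+\delta}$ carrying $P_y\mu_2$-mass $\ge R_j^{-1/2+100\delta}$, then invoke the $L^p$ theorem --- is exactly the paper's. Where you diverge is in how the arcs are recombined. You bound $\mu_2(\Bad_j(y))\le\sum_T\mu_2(2T)$, pay an overlap factor $R_j^\delta$ to pass from the sum over arcs $I_T$ to $\pi_y\mu_2(U)$, and then run a level-set/Chebyshev argument against the high-density set $H$. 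The paper instead bounds $\mu_2(\Bad_j(y))\le P_y\mu_2\bigl(P_y(\Bad_j(y))\bigr)$ directly (no union-bound loss), applies the Vitali covering lemma to extract a genuinely disjoint subfamily of the arcs $A(T)$ and hence the length bound $|P_y(\Bad_j(y))|\lesssim R_j^{-99\delta}$, and finishes with H\"older on $S^1$, giving $\mu_2(\Bad_j(y))\lesssim R_j^{-99\delta(1-1/p)}\|P_y\mu_2\|_{L^p}$.

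The quantitative consequence of your route is what you already noticed: your exponent is $c(\alpha)=99(p-1)-1$, which is negative when $p(\alpha)<100/99$, whereas the paper's is $c(\alpha)=99(1-1/p)>0$ for every $p>1$. Since Orponen's theorem only guarantees $p(\alpha)>1$ (and $p\downarrow 1$ as $\alpha\downarrow 1$), your version as stated does not prove the lemma for all $\alpha>1$ with the fixed threshold $R_j^{-1/2+100\delta}$ in the definition of a bad tube. Your proposed repair --- enlarge the constant $100$ to a large $N=N(\alpha)$ --- is legitimate: that constant propagates through the rest of the paper only as $r^{O_N(\delta)}$ factors (e.g.\ in Lemma \ref{mu2Y}), which are absorbed by taking $\delta$ small at the end, so the constant may depend on $\alpha$. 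But it is avoidable: replacing $\sum_T\mu_2(2T)$ by $P_y\mu_2(P_y(\Bad_j(y)))$ and the overlap count by Vitali removes the spurious $+\delta$ and the factor $(p-1)$ in the exponent, letting the argument work for any $p>1$ with the constant $100$ untouched. Also, in your splitting step the threshold defining $H$ should be $c_0 R_j^{99\delta}$ for a suitable small $c_0$ that absorbs the implicit constant in $|I_T|\lesssim R_j^{-1/2+\delta}$, rather than $\tfrac12 R_j^{99\delta}$; this is cosmetic but worth fixing.
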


Before turning to the proof, let us use this lemma to finish the proof of Proposition \ref{mainest1}. 

\begin{proof} [Proof of Proposition \ref{mainest1} using Lemma \ref{badrect}] We want to find a set $E_2' \subset E_2$ with $\mu_2(E_2') \ge 1 - \frac{1}{1000}$ so that for each $x \in E_2'$,
	
	$$   \| d^x_*(\mug) - d^x_*(\mu_1) \|_{L^1} \le \frac{1}{1000}. $$
	
	We recall that
	
	$$ \mu_1 \times \mu_2 (\Bad_j) = \int \mu_1(\Bad_j(x)) d\mu_2(x). $$
	
Therefore, we can choose $B_j \subset E_2$ so that $ \mu_2(B_j) \le R_j^{- (1/2) c(\alpha) \delta}$ and for all $x \in E_2 \setminus B_j$, 

$$ \mu_1(\Bad_j(x)) \lesssim R_j^{- (1/2) c(\alpha) \delta}. $$

We define $E_2' = E_2 \setminus \bigcup_{j \ge 1} B_j$.  As long as $R_0$ is sufficiently large (compared to $\delta$ and $\alpha$), we have $\mu_2(E_2') \ge 1 - \frac{1}{1000}$ as desired.  Now for each $x \in E_2'$, we have

$$   \| d^x_*(\mug) - d^x_*(\mu_1) \|_{L^1} \lesssim \sum_{j \ge 1} \mu_1 ( \Bad_j(x)) + \RD(R_0) \lesssim $$

$$ \sum_{j \ge 1} R_j^{-(1/2) c(\alpha) \delta} \lesssim R_0^{- (1/2) c(\alpha) \delta}. $$

By choosing $R_0$ sufficiently large, we get the desired bound.

\end{proof}

Now we introduce Orponen's radial projection theorem.  The statement we use appears as Proposition 3.11 in \cite{KS18}, and it appears as Equation (3.5) in Orponen's paper \cite{O17b}.  Define a radial projection map $P_y: \RR^2 \setminus \{ y \} \rightarrow S^1$ by

$$P_y(x) = \frac{x-y}{|x-y|}. $$

\begin{theorem} \label{orponenthm}(Orponen, \cite{O17b}) For every $\alpha > 1$ there exists $p(\alpha)> 1$ so that the following holds.  Suppose that $\mu_1$ and $\mu_2$ are measures on the unit disk with disjoint supports and that for every ball $B(x,r)$, $\mu_i (B(x, r)) \lesssim r^\alpha$.  Then
	
	$$ \int \| P_y \mu_2 \|_{L^p}^p d \mu_1(y) < + \infty. $$

\end{theorem}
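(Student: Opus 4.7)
The plan is to establish the bound via a dyadic level set decomposition of the density of $P_y \mu_2$, converting the $L^p$ statement into a tube-counting estimate that is controlled by the Frostman condition on $\mu_1$.

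First I would set up the density formulation. Because $\mathrm{supp}(\mu_1)$ and $\mathrm{supp}(\mu_2)$ are disjoint compact sets, there is a uniform separation $|y - z| \gtrsim 1$ for $y \in \mathrm{supp}(\mu_1)$, $z \in \mathrm{supp}(\mu_2)$, so $P_y$ is a smooth map on $\mathrm{supp}(\mu_2)$. For each such $y$, the pushforward $P_y\mu_2$ admits a density $\phi_y$ on $S^1$ defined by $\phi_y(\theta) = \lim_{\delta \to 0} \delta^{-1}\, \mu_2(W(y,\theta,\delta))$, where $W(y,\theta,\delta)$ is the wedge at $y$ of aperture $\delta$ in direction $\theta$. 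Writing $\|P_y\mu_2\|_{L^p}^p = \int_{S^1} \phi_y(\theta)^p\, d\theta$ and using that $\int \phi_y = 1$, I reduce by dyadic decomposition to showing that for some exponent $q = q(\alpha) > 1$,
$$\int \bigl|\{\theta \in S^1 : \phi_y(\theta) > \lambda\}\bigr|\, d\mu_1(y) \,\lesssim\, \lambda^{-q} \quad \text{for all } \lambda \geq 1,$$
since then choosing $p \in (1, q)$ and summing $\lambda^p \cdot \lambda^{-q}$ over dyadic $\lambda$ yields the claim.

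Next I would translate the high-density condition into a tube statement. The inequality $\phi_y(\theta) > \lambda$ forces, at some dyadic scale $\delta \in (0,1]$, the wedge mass bound $\mu_2(W(y,\theta,\delta)) \gtrsim \lambda\delta$; and since distances in the wedge are $\sim 1$, this wedge lies in a tube $T(y,\theta,\delta)$ of dimensions $\sim \delta \times 1$ passing through $y$. So the set of $(y,\theta)$ with $\phi_y(\theta) > \lambda$ is covered (after summing over dyadic $\delta$) by the set of pairs $(y, T)$ with $y \in T$, $T$ a $\delta \times 1$ tube, and $\mu_2(T) \geq c\lambda\delta$. The task is to bound the number of such tubes, weighted by $\mu_1$-mass at their centers, as a function of $\lambda$ and $\delta$.

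The main obstacle, and the step where $\alpha > 1$ enters crucially, is a Kakeya/Furstenberg-type counting estimate: if many points $y \in \mathrm{supp}(\mu_1)$ see a $\delta$-tube with atypically large $\mu_2$-mass, the corresponding tubes cannot be too disjoint, so they must arrange themselves along a family of lines; but such a configuration forces $\mu_1$ (or $\mu_2$) to concentrate on a Frostman-inadmissible set when $\alpha > 1$. Quantitatively, one shows that the number of $\delta \times 1$ tubes $T$ with $\mu_2(T) \gtrsim \lambda\delta$ passing through a $\mu_1$-typical $y$ is bounded by $\lambda^{-q'}\delta^{-1}$ times a small power savings in $\delta$, with $q' > 0$ strictly, by combining the ball Frostman bound on $\mu_2$ (to control the trivial mass of each tube) with the $\alpha$-Frostman bound on $\mu_1$ (to rule out many centers along a thin strip). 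The hardest part is extracting a quantitative $\varepsilon$-gain from the strict inequality $\alpha > 1$: the qualitative statement that $P_y\mu_2$ is absolutely continuous for $\mu_1$-a.e.\ $y$ is classical, but the $L^p$-improvement with $p > 1$ requires the power-type gain in the tube count. Summing the resulting estimate over dyadic $\delta$ and $\lambda$ concludes the proof.
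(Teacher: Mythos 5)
There is a genuine gap here, and it sits exactly where the theorem's content lies. Note first that the paper does not prove this statement at all: it is imported as a black box from Orponen \cite{O17b} (the paper points to Equation (3.5) there, and to Proposition 3.11 of \cite{KS18}), so there is no in-paper argument to compare against; what you are attempting is a reproof of Orponen's radial projection theorem, which is a substantial result in its own right. Your reductions are fine as far as they go: passing to the density $\phi_y$, the dyadic level-set decomposition, and the observation that $\phi_y(\theta)>\lambda$ forces a $\delta\times 1$ tube through $y$ with $\mu_2(T)\gtrsim\lambda\delta$ at some dyadic scale $\delta$ are all standard. But the entire difficulty is then compressed into the sentence ``one shows that the number of such tubes through a $\mu_1$-typical $y$ is bounded by $\lambda^{-q'}\delta^{-1}$ with a power saving, by combining the ball Frostman bound on $\mu_2$ with the $\alpha$-Frostman bound on $\mu_1$.'' That estimate is not proved, and it does not follow from a naive combination of the two Frostman conditions: the Frostman hypothesis with exponent $\alpha\le 2$ allows a single $\delta\times 1$ tube to carry $\mu_2$-mass as large as $\delta^{\alpha-1}$ \emph{and} $\mu_1$-mass as large as $\delta^{\alpha-1}$, so nothing in a per-tube double count rules out the train-track-type scenario in which a few heavy tubes pass through a large $\mu_1$-proportion of centers. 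Ruling this out for most pairs $(y,\theta)$ simultaneously, with a quantitative power gain $q(\alpha)>1$ coming from the strict inequality $\alpha>1$, is precisely the theorem; asserting it is circular.

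It is also worth saying that the route you gesture at (a Kakeya/Furstenberg-style incidence count for heavy tubes) is not how the known proof proceeds, and it is not clear it can be closed at this level of generality: Orponen's argument works with the product measure $\mu_1\times\mu_2$, relates the $L^p$ norms of the radial projections $P_y\mu_2$ to smoothness estimates for orthogonal projections of measures of dimension exceeding $1$, and then runs a bootstrapping scheme to upgrade the integrability exponent beyond $1$; the exponent $p(\alpha)$ degrades as $\alpha\downarrow 1$ and one cannot aim for a clean $L^2$ statement. If you want a self-contained treatment, you would either need to reproduce an argument of that type or supply a genuinely new proof of the tube-counting claim; as written, the proposal reduces the theorem to an unproven statement that carries all of its difficulty.
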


\begin{proof}[Proof of Lemma \ref{badrect}]

Recall that $\Bad_j(y)$ is defined to be 

$$ \Bad_j(y) := \bigcup_{T \in \TT_j: y \in 2T \textrm{ and $T$ is bad}} 2T. $$

\noindent In other words, $\Bad_j(y)$ is the set of $x$ so that $(y,x)$ lies in $\Bad_j$.  Therefore,

$$ \mu_1 \times \mu_2( \Bad_j) = \int \mu_2(\Bad_j(y)) d \mu_1(y). $$

Suppose that $T \in \TT_j$ is a bad rectangle and $y \in 2T$.  Let $A(T)$ be the arc of the circle whose center corresponds to the direction of the long axis of $T$ and with length $\sim  R_j^{-1/2 + \delta}$.  Since the distance from $E_1$ to $E_2$ is $\gtrsim 1$, it follows that $P_y (2T) \subset A(T)$, and so

\begin{equation} \label{bigarc} P_y \mu_2 (A(T)) \ge \mu_2(2T) \ge R_j^{-1/2 + 100 \delta}.  \end{equation}

 So we see that $P_y( \Bad_{j}(y))$ can be covered by arcs $A(T)$ of length $\sim R_j^{-1/2 + \delta}$ which each enjoy (\ref{bigarc}).  By the Vitali covering lemma, we can choose a disjoint subset of the arcs $A(T)$ so that $5 A(T)$ covers $P_y( \Bad_{j}(y))$.  This implies that the arc length measure of $P_y( \Bad_{j}(y))$ is bounded by 

$$ | P_y(\Bad_{j}(y)) | \lesssim R_j^{-99 \delta}. $$

Now we bound

$$ \mu_1 \times \mu_2(\Bad_j) = \int \mu_2 (\Bad_{j}(y)) d\mu_1(y)  \le \int \left( \int_{P_y (\Bad_{j}(y)) } P_y \mu_2 \right) d \mu_1(y).$$
	
By Holder's inequality, this is	
	
$$ \le  | P_y(\Bad_{j}(y)) |^{1 - \frac{1}{p}} \int \| P_y \mu_2 \|_{L^p} d \mu_1 \lesssim R_j^{- c(\alpha) \delta}. $$

\end{proof}

\section{Refined Strichartz estimates}\label{sec:RS}

The proof of Proposition \ref{mainest2} will use a refined Strichartz type estimate, which in turn is based on the decoupling theorem of Bourgain-Demeter \cite{BD15}.  

\begin{theorem} \label{dec} (\cite{BD15}) 
	Suppose that $S \subset \RR^d$ is a strictly convex $C^2$ hypersurface with Gaussian curvature $\sim 1$.   Decompose the $R^{-1}$-neighborhood of $S$ into blocks $\theta$ of dimensions $R^{-1/2} \times ... \times R^{-1/2} \times R^{-1}$.  Suppose that $\hat f_\theta$ is supported in $\theta$ and $f = \sum_{\theta} f_\theta$.  Then for any $p$ in the range $2 \le p \le \frac{2(d+1)}{d-1}$, 
	
\begin{equation} \label{eqdec} \| f \|_{L^p(B_R)} \lessapprox \left(\sum_\theta \|  f_\theta \|_{L^p(w_{B_R})}^2 \right)^{1/2}, \end{equation}
	
	\noindent where $w_{B_R}$ is a weight which is $\sim 1$ on $B_R$ and rapidly decaying.
\end{theorem}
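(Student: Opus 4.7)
Since Theorem \ref{dec} is stated as a citation to the main result of Bourgain--Demeter \cite{BD15}, the ``proof'' here is really just an invocation; nevertheless, let me sketch what the plan would be if one were to prove it from scratch, as this explains both why the $R^\epsilon$ loss is there and why the curvature hypothesis is essential.

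The plan would be to reduce to a model surface and then run an induction on scales driven by a multilinear Kakeya estimate. First I would use a smooth partition of unity and a local graph parametrization to write $S$, up to an affine change of coordinates, as the truncated paraboloid $\{(\xi, |\xi|^2): |\xi| \le 1\}$; the hypothesis of Gaussian curvature $\sim 1$ is precisely what makes this reduction lose at most universal constants in the relevant bounds. The blocks $\theta$ then become thickened $R^{-1/2}$-caps on the paraboloid, and the question becomes an $\ell^2$-decoupling inequality for the extension operator at the critical exponent $p = \frac{2(d+1)}{d-1}$.

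Next, letting $D(R,p)$ denote the best constant in \eqref{eqdec} at scale $R$, I would aim to prove $D(R,p) \lesssim_\epsilon R^\epsilon$ for every $\epsilon > 0$ by induction on scales. At an intermediate scale $K \ll R$, I would cover $B_R$ by balls $B_K$ of radius $K$ and partition the frequency region into $K^{-1/2}$-caps $\tau$. On each $B_K$, I would run a broad/narrow dichotomy: on \emph{broad} balls, where $d$ mutually transverse caps $\tau_1, \ldots, \tau_d$ each contribute comparably to $\|f\|_{L^p(B_K)}$, I would invoke the multilinear Kakeya inequality of Bennett--Carbery--Tao (or the multilinear restriction refinement of Bourgain--Guth) to obtain a sharp bound with no $K^\epsilon$ loss; on \emph{narrow} balls, where the mass concentrates in a single $\tau$, I would rescale and apply the inductive hypothesis at the smaller scale $R/K$. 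Combining these two regimes with a cheap $\ell^2$-decoupling at scale $K$ (where orthogonality suffices at the $L^2$ level) and interpolating via H\"older would produce a recursion of the shape $D(R,p) \le C(\epsilon, K)\, K^{O(1)} D(R/K, p)^{1-\eta}$ for some $\eta = \eta(d,p) > 0$, and iterating this would yield the desired $R^\epsilon$ bound.

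The hardest step in this scheme is the broad/narrow dichotomy combined with the multilinear input: extracting a genuine positive exponent gain $\eta$ from Bennett--Carbery--Tao without polluting the induction, and carefully accounting for all the losses in the multiscale pigeonholing, is the technical core of \cite{BD15}. The curvature hypothesis enters exactly in guaranteeing that distinct caps point in genuinely transverse normal directions, which is what the multilinear Kakeya theory needs to give a sharp bound. For the purposes of the present paper the plan is simply to quote Theorem \ref{dec} as a black box and then proceed to prove the refined Strichartz variant that drives the application to Proposition \ref{mainest2}.
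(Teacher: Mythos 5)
You correctly observe that the paper does not prove Theorem \ref{dec}: it is stated purely as a citation to Bourgain--Demeter \cite{BD15} and used as a black box in the proof of Theorem \ref{refdec}. Your sketch of the underlying argument (reduction to the paraboloid via the curvature $\sim 1$ hypothesis, induction on scales, broad/narrow dichotomy, and multilinear Kakeya input) is a fair high-level summary of \cite{BD15}, so there is no discrepancy with the paper to flag here.
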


The decoupling theorem is a remarkably strong and sharp theorem in some situations, for instance if $|f_{\theta}(x)|$ is roughly constant on $B_R$ for each $\theta$.  On the other hand, if the supports of the different $f_\theta$ are disjoint from each other, then one trivially gets the stronger inequality $\| f \|_{L^p(B_R)} \le ( \sum_\theta \| f_\theta \|_{L^p(B_R)}^p)^{1/p}$.  The idea of refined Strichartz estimates is to use the decoupling theorem where it is strong, but also to take advantage of disjointness when it occurs.  The first version of the refined Strichartz inequality appeared in \cite{DGL17}, and it was generalized in \cite{DGLZ18}.  We need here a slightly more flexible version of the inequality.  The inequality we prove here was discovered independently by Xiumin Du and Ruixiang Zhang (personal communication).

We will state our estimate in terms of wave packets.  Here is the setup.  Let $S$ and $\theta$ be as above.  Let $\TT_\theta$ be a finitely overlapping covering of $\RR^d$ by tubes $T$ of length $\sim R^{1+\delta}$ and radius $\sim R^{\frac{1+\delta}{2}}$ with long axis normal to the surface $S$ at $\theta$.  We write $\TT = \cup_\theta \TT_\theta$.  Each $T \in \TT$ belongs to $\TT_\theta$ for a single $\theta$, and we let $\theta(T)$ denote this $\theta$.  We say that $f$ is microlocalized to $(T, \theta(T))$ if $f$ is essentially supported in $2T$ and $\hat f$ is essentially supported in $2 \theta(T)$.  A function $f_T$ which is microlocalized to $(T, \theta(T))$ is called a wave packet.  If $\omega \in \theta(T)$, then $f_T$ morally has the form $ f_T \approx a \chi_T e^{2 \pi i \omega x}$, where $a \in \CC$ and $\chi_T$ denotes a smooth bump function on $T$.  Our theorem gives an estimate for the constructive interference between wave packets.

\begin{theorem} \label{refdec} Let $p$ be in the range $2 \le p \le \frac{2(d+1)}{d-1}$.  Let $\WW \subset \TT$ and suppose that each $T \in \WW$ lies in $B_R$.  Let $W = | \WW|$.  Suppose that $f = \sum_{T \in \WW} f_T$, where $f_T$ is microlocalized to $(T, \theta(T))$.  Suppose that $\| f_T \|_{L^p}$ is roughly constant among all the $T \in \WW$.   Let $Y$ be a union of $R^{1/2}$-cubes in $B_R$ each of which intersects at most $M$ tubes $T \in \WW$.  Then
	
	\begin{equation} \label{eqrefdec} \| f \|_{L^p(Y)} \lesssim R^\epsilon \left(\frac{M}{W} \right)^{\frac{1}{2} - \frac{1}{p}} \left(\sum_{T \in \WW} \| f_T \|_{L^p}^2 \right)^{1/2}. \end{equation}
	
\end{theorem}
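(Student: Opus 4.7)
I would prove Theorem \ref{refdec} by induction on the scale $R$, combining Bourgain--Demeter decoupling at the intermediate scale $R^{1/2}$ with parabolic rescaling. Let $\mathrm{RD}(R)$ denote the best constant in (\ref{eqrefdec}) at scale $R$. The goal is to establish the recursion
\begin{equation*}
\mathrm{RD}(R) \le C_\epsilon\, R^{\epsilon/2}\, \mathrm{RD}(R^{1/2}),
\end{equation*}
which, iterated $O(\log \log R)$ times, collapses to $\mathrm{RD}(R) \le R^{O(\epsilon)}$; after relabeling $\epsilon$ this is the desired estimate.

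For the inductive step I would first partition $Y$ into $R^{1/2}$-cubes $Q$. By hypothesis each $Q$ meets at most $M$ tubes of $\WW$, and by the Schwartz decay of wave packets outside their tubes, the remaining $f_T$'s contribute only rapidly decaying errors on $Q$. Next, group the $R^{-1/2}$-caps $\theta$ into larger $R^{-1/4}$-caps $\tau$ (the natural decoupling scale for $R^{1/2}$-balls) and set $g_\tau := \sum_{T \in \WW,\, \theta(T) \subset \tau} f_T$, so that $\widehat{g_\tau}$ is essentially supported in $\tau$. Applying Theorem \ref{dec} at scale $R^{1/2}$ on each $Q$ and summing over $Q \subset Y$ via Minkowski's inequality (using the embedding $L^p(\ell^2) \hookrightarrow \ell^2(L^p)$, valid for $p \ge 2$) yields
\begin{equation*}
\|f\|_{L^p(Y)} \le C_\epsilon\, R^{\epsilon/2} \Bigl(\sum_\tau \|g_\tau\|_{L^p(Y)}^2\Bigr)^{1/2}.
\end{equation*}

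To bound each $\|g_\tau\|_{L^p(Y)}$ I would apply parabolic rescaling along $\tau$: it sends $\tau$ to a unit frequency cap, rescales the $R^{-1/2}$-caps $\theta \subset \tau$ to $R^{-1/4}$-caps, turns the scale-$R$ wave packets $f_T$ with $\theta(T) \subset \tau$ into scale-$R^{1/2}$ wave packets, and maps the $R^{1/2}$-cubes to the $R^{1/4}$-cubes of the refined decoupling problem at scale $R^{1/2}$. Crucially this is a bijection on tubes and on small cubes, so the ``$\le M$ tubes per small cube'' hypothesis transports to the rescaled problem. After a standard dyadic pigeonholing to make the tube count per active $\tau$ roughly constant (losing only a $\log R$), the inductive hypothesis $\mathrm{RD}(R^{1/2})$ applies to each rescaled $g_\tau$. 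Undoing the rescaling and combining with the near-constancy of $\|f_T\|_{L^p}$ closes the recursion.

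\textbf{Main obstacle.} The delicate point is the parabolic rescaling book-keeping --- verifying that the wave packet structure, the small-cube structure, and the parameter $M$ all transform consistently to yield a valid instance of Theorem \ref{refdec} at scale $R^{1/2}$, and handling the pigeonholing that balances the numbers $W_\tau$ of tubes with $\theta(T) \subset \tau$ across active $\tau$ so that $(M/W_\tau)$ aggregates into $(M/W)$ on the other side. Standard technicalities (Schwartz tails of $w_Q$, slight leakage of wave packets outside their tubes, and dyadic pigeonholing of the ``roughly constant'' $L^p$-norm hypothesis) produce only logarithmic losses, absorbed into the $R^\epsilon$ slack.
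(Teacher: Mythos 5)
Your high-level strategy (induction on the scale $R$, Bourgain--Demeter decoupling at the intermediate scale $R^{1/2}$, parabolic rescaling to invoke the inductive hypothesis, and dyadic pigeonholing) is the same as the paper's. However, there is a genuine missing ingredient in your plan: the spatial cylinder decomposition. You define $g_\tau := \sum_{\theta(T)\subset\tau} f_T$, i.e.\ you group the wave packets only by their frequency cap $\tau$, and then propose to parabolically rescale $g_\tau$ and invoke the inductive hypothesis at scale $R^{1/2}$. But parabolic rescaling adapted to an $R^{-1/4}\times\cdots\times R^{-1/2}$ cap $\tau$ contracts physical space by $R^{-1/4}$ tangentially and $R^{-1/2}$ normally, so the original ball $B_R$ is mapped to an ellipsoid of dimensions roughly $R^{3/4}\times\cdots\times R^{3/4}\times R^{1/2}$, \emph{not} to a ball of radius $R^{1/2}$. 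The rescaled wave packets are $R^{1/4}\times R^{1/2}$ tubes, which is right, but they can be scattered across this large ellipsoid, so the hypothesis ``each $T\in\WW$ lies in $B_{R^{1/2}}$'' is not met by the rescaled $g_\tau$. The paper fixes precisely this by introducing, for each $\tau$, a cover of $B_R$ by cylinders $\Box$ of radius $R^{3/4}$ and length $R$, and working with $f_\Box := \sum_{T\in\WW,\;\theta(T)\subset\tau(\Box),\;T\subset\Box} f_T$ rather than $g_\tau$. A single cylinder $\Box$ rescales to a ball of radius $R^{1/2}$, and the decoupling step on each $R^{1/2}$-cube $Q$ is naturally compatible with this (for each $\tau$ there is a unique $\Box$ containing $Q$). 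The paper then uses the intermediate cubes for $f_\Box$ as $R^{1/2}\times\cdots\times R^{3/4}$ cylinders aligned with $\Box$, which rescale to genuine $R^{1/4}$-cubes, so the inductive hypothesis applies cleanly.

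This omission also affects the counting step. The paper runs three pigeonholings (the multiplicity $M'$ inside each $\Box$, the tube count $W'=|\WW_\Box|$, and the number $M''$ of active cylinders through a given cube), and the key arithmetic is $M'M''\le M$ and $W'|\mathbb{B}|\le W$, which is how $(M'/W')^{1/2-1/p}$ at the lower scale and $(M'')^{1/2-1/p}$ from H\"older combine into $(M/W)^{1/2-1/p}$. Your single pigeonholing on $W_\tau$ does not by itself produce this factorization of $M$. You correctly flag ``parabolic rescaling bookkeeping'' as the delicate point, but the specific missing idea is the $\Box$ decomposition: without it the rescaled problem does not match the inductive statement, and the $M$ does not split as needed. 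I'd encourage you to replace the $g_\tau$'s by the $f_\Box$'s and rework the pigeonholing accordingly; then the rest of your outline (Minkowski for $p\ge2$ to sum the cube-local decoupling, roughly-constant $\|f_T\|_{L^p}$ via pigeonholing, and a WLOG reduction to $\|f\|_{L^p(Q)}$ roughly constant over $Q\subset Y$) goes through essentially as you describe.
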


The fraction $M/ W$ measures to what extent the wave packets of $\WW$ are disjoint from each other.  If $M=1$, then the wave packets are completely disjoint, and the inequality above becomes $\| f \|_{L^p(Y)} \lessapprox ( \sum_T \| f_T \|_{L^p}^p)^{1/p}$.

Before proving Theorem \ref{refdec}, let us explain how it relates to the decoupling theorem (Theorem \ref{dec}).  In Theorem \ref{dec}, consider the special case that $f_\theta$ is non-zero for $N$ caps $\theta$, and that for each of these caps, $f_\theta = \sum_{T \in \TT_\theta} f_T$ is a sum of $P$ non-zero wave packets $f_T$, and that all these wave packets have the same amplitude.  In \cite{BD15}, the general theorem was reduced to this special case by pigeonholing, so it is not really so special.  In this case, the decoupling inequality (\ref{eqdec}) can be written in the form

\begin{equation}\label{eqdecex} \| f \|_{L^p(B_R)} \lessapprox \left(\frac{1}{P}\right)^{\frac{1}{2} - \frac{1}{p}}  \left(\sum_{T \in \WW} \| f_T \|_{L^p}^2 \right)^{1/2}. \end{equation}

\noindent Now if $Q$ is any $R^{1/2}$-square, then it can lie in $\lesssim 1$ tube $T$ in each direction.  Therefore, we have $M \le N$, and $W = NP$, and so $\frac{M}{W} \le \frac{1}{P}$.  So we see that (\ref{eqrefdec}) is at least as strong as (\ref{eqdecex}), and it is stronger whenever $M$ is much less than $N$.  When $M$ is much less than $N$, then it means that each cube $Q$ lies in wave packets from only a small fraction of the different caps $\theta$, which means that the supports of the $f_\theta$ don't intersect as much as they could.  In summary, Theorem \ref{refdec} is like Theorem \ref{dec}, but it gives a stronger estimate when the supports of the $f_\theta$ don't intersect too much.

\begin{proof} [Proof of Theorem \ref{refdec}] 
	Without loss of generality, we can assume that 
	
	\begin{equation} \label{Qconst} \|  f \|_{L^p(Q)} \sim \textrm{ constant for all $R^{1/2}$-cubes $Q \subset Y$}. \end{equation}
	
	To set up the argument, we decompose $f$ as follows.  We cover $S$ with larger blocks $\tau$ of dimensions $R^{-1/4} \times ... \times R^{-1/4} \times R^{-1/2}$.  
	For each $\tau$ we cover $B^d(R)$ with cylinders $\Box$ with radius $R^{3/4}$ and length $R$, with the long axis perpendicular to $\tau$.  Each cylinder $\Box$ is associated to one $\tau$, which denote $\tau(\Box)$.  Then we define
	
	$$\WW_\Box := \{ T \in \WW: \theta(T) \subset \tau(\Box) \textrm{ and } T \subset \Box \}. $$
	
	We define $f_\Box = \sum_{T\in \WW_\Box} f_T. $  We note that $\hat f_\Box$ is essentially supported in $\tau(\Box)$.   An $R^{1/2}$-cube $Q$ lies in one cylinder $\Box$ associated to each cap $\tau$.  So by applying decoupling at scale $R^{1/2}$, we get 
	
	\begin{equation} \label{decex} \| f \|_{L^p(Q)} \lessapprox \left( \sum_{\Box} \| f_{\Box} \|_{L^p(Q)}^2 \right)^{1/2}. \end{equation}
	
	(Stricly speaking, we have a weight on the right-hand side.  However, if the tail of the weight dominates for some $Q \subset Y$, then we trivially get the conclusion of the theorem.  Therefore, we can ignore the tail of the weight.)
	
	The next ingredient is induction on scales.  After parabolic rescaling, the decomposition $f_\Box = \sum_{T \in \WW_\Box} f_T$ is equivalent to the setup of the theorem at scale $R^{1/2}$ instead of scale $R$.  So by induction on the radius, we get a version of our main inequality for each function $f_\Box$.   It goes as follows:
	
	Write $\Box$ as a union of $R^{1/2} \times R^{3/4}$ cylinders running parallel to the long axis of $\Box$.  Let $Y_{\Box, M'}$ be the union of those cylinders that intersect $\sim M'$ of the tubes $T \in \WW_\Box$.  Then
	
	\begin{equation}\label{indbox} \| f_\Box \|_{L^p(Y_{\Box, M'})} \lesssim R^{\epsilon/2}  \left(\frac{M'}{| \WW_\Box|} \right)^{\frac{1}{2} - \frac{1}{p}} \left(\sum_{T \in \WW_\Box}   \| f_T \|_{L^p}^2 \right)^{1/2}. \end{equation}
	
	Now we dyadically pigeonhole $M'$ so that
	
	$$ \| f \|_{L^p(Q)} \lessapprox \left\| \sum_{\Box: Q \subset Y_{\Box, M'}} f_\Box \right\|_{L^p(Q)} $$
	
	\noindent for a fraction $\approx 1$ of $Q \subset Y$.
	
	We fix this value of $M'$, and from now on we abbreviate $Y_\Box = Y_{\Box, M'}$.  
	
	Next we dyadically pigeonhole $| \WW_\Box|$.  Let $\mathbb{B}_{W'}$ be the set of $\Box$ with $| \WW_\Box | \sim W'$.  We dyadically pigeonhole $W'$ so that
	
	\begin{equation} \label{decentcube} \| f \|_{L^p(Q)} \lessapprox \left\| \sum_{\Box \in \mathbb{B}_{W'}: Q \subset Y_\Box} f_\Box \right\|_{L^p(Q)}. \end{equation}
	
	\noindent for a fraction $\approx 1$ of $Q \subset Y$.
	
	We fix this value of $W'$ and from now on we abbreviate $\mathbb{B} = \mathbb{B}_{W'}$.
	
	We also note that for each $\Box \in \mathbb{B}$,
	
	\begin{equation} \label{fboxl2} \sum_{T \in \WW_\Box} \| f_T \|_{L^p}^2 \sim \frac{W'}{W} \sum_{T \in \WW} \| f_T \|_{L^p}^2. \end{equation} 
	
	Finally, we dyadically pigeonhole the cubes $Q \subset Y$ according to the number of $\Box \in \mathbb{B}$ so that $Q \subset Y_\Box$.  We get a subset $Y' \subset Y$ so that for each cube $Q \subset Y'$, $Q \subset Y_\Box$ for $\sim M''$ choices of $\Box \in \mathbb{B}$, and $Q$ obeys (\ref{decentcube}).  Moreover, by dyadic pigeonholing, we have $|Y'| \approx |Y|$.  Since each cube $Q \subset Y$ had approximately equal $L^p$ norm, we also get $\| f \|_{L^p(Y')} \approx \| f \|_{L^p(Y)}$.
	
	We also note that
	
	$$ M'  M'' \le M. $$
	
	\noindent because a cube $Q \subset Y'$ belongs to $Y_\Box$ for $\sim M''$ different $\Box$, and if $Q \subset Y_\Box$, then it belongs to $T$ for $\sim M'$ different $T \in \WW_\Box$.
	
	Similarly, we note that
	
	$$ W' | \mathbb{B}| \le W $$
	
	\noindent because for each $\Box \in \mathbb{B}$, $| \WW_\Box| \sim W'$, and $\WW_\Box$ are disjoint subsets of $\WW$.  
	
	Now we are ready to begin our estimate.  For each $Q \subset Y'$, we have
	
	$$ \| f \|_{L^p(Q)} \lessapprox  \left\| \sum_{\Box \in \mathbb{B}: Q \subset Y_{\Box}} f_\Box \right\|_{L^p(Q)}. $$
	
	Applying decouping as in (\ref{decex}), this is bounded by
	
	$$ \lessapprox  \left(\sum_{\Box \in \mathbb{B}: Q \subset Y_{\Box}}  \| f_\Box \|_{L^p(Q)}^2 \right)^{1/2}. $$
	
	The number of terms in the sum is $\sim M''$.  Applying H\"older, we get
	
	$$ \lesssim (M'')^{\frac{1}{2} - \frac{1}{p}}  \left(\sum_{\Box \in \mathbb{B}: Q \subset Y_{\Box}}  \| f_\Box \|_{L^p(Q)}^p \right)^{1/p}. $$
	
	We raise this inequality to the $p^{th}$ power and sum over $Q \subset Y'$ to get
	
	$$ \| f \|_{L^p(Y)}^p \lessapprox \| f \|_{L^p(Y')}^p \lessapprox (M'')^{\frac{p}{2} - 1} \sum_{\Box \in \mathbb{B}} \| f_\Box \|_{L^p(Y_\Box)}^p. $$
	
	Now we can use our induction on scales -- equation (\ref{indbox}) -- which gives
	
	$$ \lesssim R^{p\epsilon/2}  \left(\frac{M' M''}{W'} \right)^{\frac{p}{2} - 1} \sum_{\Box \in \mathbb{B}} \left(\sum_{T \in \WW_\Box} \| f_T \|_{L^p}^2 \right)^{p/2}. $$
	
	By (\ref{fboxl2}), this is 
	
	$$ \lesssim  R^{p\epsilon/2}  \left(\frac{M' M''}{W} \right)^{\frac{p}{2} - 1} \frac{|\mathbb{B}| W'} {W}\left( \sum_{T \in \WW}  \| f_T \|_{L^p}^2 \right)^{p/2}. $$
	
	Since $M'  M'' \le M$ and $|\mathbb{B}| W' \le W$, we get
	
	$$ \lesssim  R^{p\epsilon/2}  \left(\frac{ M}{W} \right)^{\frac{p}{2} - 1} \left( \sum_{T \in \WW}  \| f_T \|_{L^p}^2 \right)^{p/2}.$$
	
	Putting everything together and taking account of $\lessapprox$ throughout, we get
	
	$$ \| f \|_{L^p(Y)} \lesssim R^{3 \epsilon/4} 	 \left(\frac{ M}{W} \right)^{\frac{1}{2} - \frac{1}{p}} \left( \sum_{T \in \WW}  \| f_T \|_{L^p}^2 \right)^{1/2}.$$
	
	This closes the induction and finishes the proof.	 \end{proof}

One can also apply a rescaling to this theorem.  If we rescale in Fourier space by a factor $\lambda$, then each $R^{-1/2} \times ... \times R^{-1}$ block $\theta$ is replaced by a $\lambda R^{-1/2} \times ... \times \lambda R^{-1}$ block.  There is a corresponding rescaling in physical space so that each $R^{1/2} \times ... \times R$ tube $T$ is replaced by a 
$\lambda^{-1} R^{1/2} \times ... \times \lambda^{-1} R$ tube $T$.  The case of interest for us is $\lambda = R$.

\begin{corollary} \label{correfdec} 	Suppose that $S \subset \RR^d$ is a strictly convex $C^2$ hypersurface with Gaussian curvature $\sim 1$.    Suppose that the 1-neighborhood of $R S$ is partitioned into $R^{1/2} \times ... \times R^{1/2} \times 1$ blocks $\theta$.  For each $\theta$, let $\TT_\theta$ be a set of tubes of dimensions $R^{-1/2 + \delta} \times 1$ with long axis perpendicular to $\theta$, and let $\TT = \cup_\theta \TT_\theta$. 
	
	Let $p$ be in the range $2 \le p \le \frac{2(d+1)}{d-1}$.  Let $\WW \subset \TT$ and suppose that each $T \in \WW$ lies in the unit ball.  Let $W = | \WW|$.  Suppose that $f = \sum_{T \in \WW} f_T$, where $f_T$ is microlocalized to $(T, \theta(T))$.  Suppose that for each $T \in \WW$, $\| f_T \|_{L^p}$ is roughly constant.   Let $Y$ be a union of $R^{-1/2}$-cubes in $B_R$ each of which intersects at most $M$ tubes $T \in \WW$.  Then
	
	$$ \| f \|_{L^p(Y)} \lesssim R^\epsilon \left(\frac{M}{W} \right)^{\frac{1}{2} - \frac{1}{p}} \left(\sum_{T \in \WW} \| f_T \|_{L^p}^2 \right)^{1/2}. $$
	
\end{corollary}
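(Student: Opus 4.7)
The plan is to derive Corollary \ref{correfdec} directly from Theorem \ref{refdec} by an isotropic rescaling. The two statements are dual to each other under the dilation $x\mapsto Rx$ in physical space (equivalently $\xi\mapsto\xi/R$ in frequency space): Theorem \ref{refdec} is stated at physical scale $R$ with a unit-curvature surface $S$, while the corollary lives at unit physical scale with the surface $RS$. So the strategy is to rescale all the data in the corollary and quote Theorem \ref{refdec} essentially verbatim.

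Concretely, I will set $\tilde f(x):=f(x/R)$, so that $\hat{\tilde f}(\xi)=R^d\hat f(R\xi)$. If $\hat f$ is essentially supported in the $1$-neighborhood of $RS$, then $\hat{\tilde f}$ is essentially supported in the $R^{-1}$-neighborhood of $S$. A block $\theta$ of dimensions $R^{1/2}\times\cdots\times R^{1/2}\times 1$ on $RS$ corresponds to a block $\tilde\theta:=R^{-1}\theta$ of dimensions $R^{-1/2}\times\cdots\times R^{-1/2}\times R^{-1}$ on $S$, exactly the decomposition used in Theorem \ref{refdec}. Each tube $T$ of dimensions $R^{-1/2+\delta}\times\cdots\times R^{-1/2+\delta}\times 1$ in the unit ball lifts to $\tilde T:=RT$ of dimensions $R^{1/2+\delta}\times\cdots\times R^{1/2+\delta}\times R$ in $B_R$, with long axis still normal to $\tilde\theta(\tilde T)$. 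If $f_T$ is microlocalized to $(T,\theta(T))$, then $\tilde f_T:=f_T(\cdot/R)$ is microlocalized to $(\tilde T,\tilde\theta(\tilde T))$, and each $R^{-1/2}$-cube $Q\subset Y$ dilates to an $R^{1/2}$-cube $\tilde Q\subset \tilde Y$.

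The combinatorial parameters are then invariant: dilation by $R$ is a bijection between $(Q,T)$-incidences and $(\tilde Q,\tilde T)$-incidences, so both the per-cube multiplicity $M$ and the total count $W=|\WW|$ are preserved. The $L^p$ norms all rescale by the same factor $R^{d/p}$ (both $\|\tilde f\|_{L^p(\tilde Y)}$ and each $\|\tilde f_T\|_{L^p}$ pick up $R^{d/p}$), so this factor cancels from both sides of the inequality. Feeding the rescaled data into Theorem \ref{refdec} then yields exactly the bound claimed in the corollary.

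The only point of friction is that the tubes in Theorem \ref{refdec} are stated with dimensions $R^{(1+\delta)/2}\times\cdots\times R^{1+\delta}$, whereas our rescaled tubes have dimensions $R^{1/2+\delta}\times\cdots\times R$. This mismatch is harmless: the proof of Theorem \ref{refdec} is insensitive to such a slight change in the $\delta$-parameter in the tube dimensions, and in any case a tube of length $R$ is covered by $O(R^{O(\delta)})$ tubes of length $R^{1+\delta}$ with the loss absorbed into the $R^\epsilon$ factor. I do not expect any genuine obstacle beyond this routine bookkeeping, since the entire analytic content has already been carried out in Theorem \ref{refdec}.
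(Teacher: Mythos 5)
Your proposal is correct and is essentially identical to the paper's argument: the paper derives Corollary \ref{correfdec} from Theorem \ref{refdec} precisely by the Fourier-space dilation $\xi\mapsto R\xi$ (equivalently $x\mapsto x/R$), which the paper states briefly in the paragraph immediately preceding the corollary. Your bookkeeping of how the blocks, tubes, cubes, incidence counts, and $L^p$ norms transform, and your observation that the slight $R^{O(\delta)}$ discrepancy in the tube dimensions is absorbed into the $R^\epsilon$ loss, is exactly what the paper leaves implicit.
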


Corollary \ref{correfdec} is the result we will actually use in our estimates about the Falconer problem.  

Theorem \ref{refdec} is closely related to the refined Strichartz estimates from \cite{DGL17}, \cite{DGLZ18} and \cite{DGOWWZ18}, and we record a corollary in a similar form.  To set up the statement, we need to set up a little notation.  We find it most convenient to work with the case that $S$ is a graph, so suppose $S$ is defined by $\omega_d = \phi(\omega_1, ..., \omega_{d-1})$ , and $(\omega_1, ..., \omega_{d-1}) \in B^{d-1}(1)$.  We assume that $\phi$ is $C^2$ and that the eigenvalues of the Hessian $\nabla^2 \phi$ are $\sim 1$.  Then for a function $g: B^{d-1} \rightarrow \CC$, we can define the extension operator by

\begin{equation} \label{defE} E g(x) = \int_{B^{d-1}} e^{2 \pi i (x_1 \omega_1 + ... + x_{d-1} \omega_{d-1}+ x_d \phi)} g(\omega_1, ..., \omega_{d-1})\,d\omega_1\cdots d\omega_{d-1} . \end{equation}

We decompose $B^{d-1}$ into finitely overlapping balls $\theta$ of radius $\sim R^{-1/2}$, and then we can decompose $g$ as

$$ g = \sum_{\theta, v} g_{\theta,v}, $$

where

\begin{enumerate}
	
	\item $v \in R^{1/2 + \delta} \ZZ^{d-1}$
	
	\item $g_{\theta,v}$ is supported on $\theta$.
	
	\item $\hat g_{\theta, v}$ is essentially supported on a ball around $v$ of radius $R^{1/2 + \delta}$.
	
	\item Therefore, the functions $g_{\theta,v}$ are approximately orthogonal.
	
	\item $E g_{\theta, v}$ restricted to $B_R$ is essentially supported on a tube $T_{\theta,v}$ of radius $\sim R^{1/2 + \delta}$ and length $\sim R$.
	
	\item If we think of $\theta$ as a cap in $S$, then the long axis of $T_{\theta, v}$ is normal to $S$.  Also $T_{\theta, v}$ intersects the plane $x_d = 0$ at the point $(v,0)$.  	
\end{enumerate}

See Section 3 of \cite{GuthII} for background on this wave packet decomposition, including proofs of these standard facts.

Now we are ready to state our refined Strichartz estimate.

\begin{theorem} \label{RS} Let $E$ be the extension operator as in \eqref{defE}, where $\phi$ is $C^2$ and the eigenvalues of the Hessian $\nabla^2 \phi$ are $\sim 1$.  Suppose that $g: B^{d-1} \rightarrow \CC$.  Suppose that $g = \sum_{(\theta, v) \in \WW} g_{\theta, v}$, where $\| g_{\theta, v} \|_{L^2}$ are comparable for all $(\theta, v) \in \WW$.  Let $W = | \WW|$.   Suppose $Y$ is a union of $R^{1/2}$-cubes in $B_R^d$ which each intersect $\sim M$ of the tubes $T_{\theta,v} \in \WW$.  Suppose that $p = \frac{2(d+1)}{d-1}$.  Then
	
	$$ \| Eg \|_{L^p(Y)} \lesssim R^\epsilon \left(\frac{M}{W} \right)^{\frac{1}{2} - \frac{1}{p}}  \| g \|_{L^2}. $$
\end{theorem}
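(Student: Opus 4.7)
The approach is to deduce Theorem \ref{RS} from Corollary \ref{correfdec} applied to $f = Eg$ together with the wave packet decomposition $Eg = \sum_{(\theta,v)\in\WW} Eg_{\theta,v}$. After rescaling physical space by a factor of $R$ so that $B_R$ becomes the unit ball, properties (5)--(6) of the wave packet decomposition listed before the theorem say exactly that $Eg_{\theta,v}$ is microlocalized to the pair $(T_{\theta,v},\theta)$ in the sense required by Corollary \ref{correfdec}: $Eg_{\theta,v}$ is essentially supported on the tube $T_{\theta,v}$, and its Fourier transform is essentially supported on the lift of $\theta$ to the $R^{-1}$-neighborhood of $S$, which after the rescaling corresponds to an $R^{1/2}\times\cdots\times R^{1/2}\times 1$ block on $RS$. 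Thus the hypothesis of Corollary \ref{correfdec} is satisfied with the same $\WW$, $W$, $M$, and $Y$.

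Applying Corollary \ref{correfdec} at the critical exponent $p = \frac{2(d+1)}{d-1}$ would give
\[
\|Eg\|_{L^p(Y)} \lesssim R^\epsilon \left(\frac{M}{W}\right)^{\frac{1}{2}-\frac{1}{p}}\left(\sum_{(\theta,v)\in\WW}\|Eg_{\theta,v}\|_{L^p}^2\right)^{1/2}.
\]
The remaining task is to replace the wave packet $L^p$-norms on the right by $\|g\|_{L^2}$. For a single wave packet I would interpolate between the trivial bound $\|Eg_{\theta,v}\|_{L^\infty}\le \|g_{\theta,v}\|_{L^1}\lesssim R^{-(d-1)/4}\|g_{\theta,v}\|_{L^2}$ (from $|\theta|\sim R^{-(d-1)/2}$) and the Plancherel-type bound $\|Eg_{\theta,v}\|_{L^2(B_R)}\lesssim R^{1/2}\|g_{\theta,v}\|_{L^2}$ (slice in the $x_d$ direction and apply Plancherel in the tangential variables). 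Since $Eg_{\theta,v}$ is essentially concentrated on $T_{\theta,v}$, a direct H\"older-type interpolation on that tube gives $\|Eg_{\theta,v}\|_{L^p}^p \le \|Eg_{\theta,v}\|_{L^\infty}^{p-2}\|Eg_{\theta,v}\|_{L^2}^2$, and the exponent check $-\frac{d-1}{4}(1-\frac{2}{p}) + \frac{1}{p} = -\frac{d-1}{4}+\frac{d+1}{2p} = 0$ at $p = \frac{2(d+1)}{d-1}$ yields $\|Eg_{\theta,v}\|_{L^p}\lesssim R^{O(\delta)}\|g_{\theta,v}\|_{L^2}$.

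The comparability hypothesis on $\|g_{\theta,v}\|_{L^2}$ now transfers, up to $R^{O(\delta)}$ factors, to comparability of $\|Eg_{\theta,v}\|_{L^p}$, so that Corollary \ref{correfdec} applies (after a routine dyadic pigeonholing that costs only a power of $\log R$). Finally, near-orthogonality of the $\{g_{\theta,v}\}$ --- the $g_{\theta,v}$ are supported on the (essentially) disjoint caps $\theta$, and for each fixed $\theta$ the $\widehat{g_{\theta,v}}$ are essentially supported on disjoint balls indexed by $v\in R^{1/2+\delta}\ZZ^{d-1}$ --- gives $\sum_{(\theta,v)\in\WW}\|g_{\theta,v}\|_{L^2}^2\lesssim \|g\|_{L^2}^2$, and combining these estimates closes the argument. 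I expect the only real obstacle to be bookkeeping: tracking scales correctly across the rescaling to Corollary \ref{correfdec} and absorbing the several $R^\delta$-losses into the single $R^\epsilon$ on the right-hand side; the substantive inputs (refined decoupling and the wave packet/extension correspondence) are already available.
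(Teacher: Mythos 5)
Your overall route is the same as the paper's: apply the refined decoupling estimate (the paper applies Theorem \ref{refdec} directly at physical scale $R$; you propose rescaling to use Corollary \ref{correfdec}, which is equivalent) to the wave packets $Eg_{\theta,v}$, control a single wave packet's $L^p$ norm by $\|g_{\theta,v}\|_{L^2}$, and finish with near-orthogonality. Your interpolation argument for the single wave packet bound is a slightly different (but equally valid) computation from the paper's, which uses only the $L^\infty$ bound on the tube: $\|Eg_{\theta,v}\|_{L^p(T_{\theta,v})}\le|T_{\theta,v}|^{1/p}\|Eg_{\theta,v}\|_{L^\infty}\le|T_{\theta,v}|^{1/p}|\theta|^{1/2}\|g_{\theta,v}\|_{L^2}$, with the exponent check $|T_{\theta,v}|^{1/p}|\theta|^{1/2}\lesssim 1$ at $p=\frac{2(d+1)}{d-1}$.

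However, there is a gap in your last step, and it is precisely the point the paper takes care with. You claim that comparability of $\|g_{\theta,v}\|_{L^2}$ transfers to comparability of $\|Eg_{\theta,v}\|_{L^p}$. That claim is false: the bound $\|Eg_{\theta,v}\|_{L^p}\lesssim\|g_{\theta,v}\|_{L^2}$ is only an upper bound, and a wave packet with internal cancellation in $g_{\theta,v}$ can have $\|Eg_{\theta,v}\|_{L^p}$ far below this. You then appeal to "a routine dyadic pigeonholing," but that is where the trouble starts: after pigeonholing onto a subcollection $\WW_\lambda\subset\WW$ with comparable $L^p$ norms, Corollary \ref{correfdec} (or Theorem \ref{refdec}) produces the factor $(M/W_\lambda)^{\frac12-\frac1p}$ with $W_\lambda=|\WW_\lambda|\le W$. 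Since $\frac12-\frac1p>0$, this factor is \emph{larger} than the desired $(M/W)^{\frac12-\frac1p}$, so the naive chain $\bigl(\frac{M}{W_\lambda}\bigr)^{\frac12-\frac1p}\bigl(\sum_{\WW_\lambda}\|Eg_{\theta,v}\|_{L^p}^2\bigr)^{1/2}\lesssim\bigl(\frac{M}{W_\lambda}\bigr)^{\frac12-\frac1p}\|g\|_{L^2}$ does \emph{not} give the theorem. The repair — which is how the paper actually closes the argument — is to use the comparability of $\|g_{\theta,v}\|_{L^2}$ over all of $\WW$ to write $\sum_{(\theta,v)\in\WW_\lambda}\|g_{\theta,v}\|_{L^2}^2\sim\frac{W_\lambda}{W}\|g\|_{L^2}^2$, arriving at $\bigl(\frac{M}{W_\lambda}\bigr)^{\frac12-\frac1p}\bigl(\frac{W_\lambda}{W}\bigr)^{1/2}\|g\|_{L^2}=M^{\frac12-\frac1p}W_\lambda^{1/p}W^{-1/2}\|g\|_{L^2}$, and then observing that the net exponent of $W_\lambda$ is $1/p>0$, so $W_\lambda\le W$ gives the bound $(M/W)^{\frac12-\frac1p}\|g\|_{L^2}$. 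You should replace your transfer-of-comparability claim by this argument.
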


\begin{proof} Let $\eta_{B_R}$ be a bump function associated to the ball of radius $R$.  We define
	
	$$ f_{\theta, v} = \eta_{B_R} Eg_{\theta, v}. $$
	
	The function $f_{\theta, v}$ is essentially supported in $T_{\theta, v}$ and its Fourier transform is essentially supported in the $R^{-1}$-neighborhood of $\theta$ (viewing $\theta$ as a cap in $S \subset \RR^d$).  Therefore, the functions $f_{\theta,v}$ have the right microlocalization to apply Theorem \ref{refdec}.  Before doing so, we need to sort them by $L^p$-norm.  We define
	
	$$ \WW_{\lambda} := \{ (\theta, v) \in \WW: \| f_{\theta,v} \|_{L^p} \sim \lambda \}. $$
	
	We define $g_\lambda := \sum_{(\theta, v) \in \WW_\lambda} g_{\theta, v}$, and $W_\lambda = | \WW_\lambda|$.  Now Theorem \ref{refdec} gives
	
	$$ \| E g_\lambda \|_{L^p(Y)} \lessapprox \left(\frac{M}{W_\lambda} \right)^{\frac{1}{2} - \frac{1}{p}} \left(\sum_{(\theta,v) \in \WW_\lambda} \| Eg_{\theta,v} \|_{L^p(B_R)}^2 \right)^{1/2}. $$
	
	Next we note that $\| E g_{\theta,v} \|_{L^p(B_R)} \lesssim \| g_{\theta, v} \|_{L^2}$.  This is a consequence of the Strichartz or Tomas-Stein inequality, but because $E g_{\theta, v}$ is a single wave packet, there is an even simpler argument:
	
	$$ \| E g_{\theta, v} \|_{L^p(B_R)} \lesssim \| E g_{\theta, v} \|_{L^p(T_{\theta, v})} \le | T_{\theta, v} |^{1/p} \| E g_{\theta,v} \|_{L^\infty} \le $$
	
	$$ \le  | T_{\theta, v}|^{1/p} \int_{\theta} | g_{\theta, v}| \le  | T_{\theta, v}|^{1/p} |\theta|^{1/2} \| g_{\theta, v} \|_{L^2}. $$
	
	\noindent Now $T_{\theta, v}$ has volume $R^{\frac{1}{2}(d-1) + 1}$ and $\theta$ has volume $R^{-\frac{d-1}{2}}$ and so $| T_{\theta,v}|^{1/p} |\theta|^{1/2} \lesssim 1$.  Plugging in this bound, we get
	
	$$  \| E g_\lambda \|_{L^p(Y)} \lessapprox \left(\frac{M}{W_\lambda} \right)^{\frac{1}{2} - \frac{1}{p}} \left(\sum_{(\theta, v) \in \WW_\lambda} \| g_{\theta, v} \|_{L^2}^2 \right)^{1/2}. $$
	
	Since all the $\| g_{\theta, v} \|_{L^2}$ are comparable, we get
	
	$$  \| E g_\lambda \|_{L^p(Y)} \lessapprox \left(\frac{M}{W_\lambda} \right)^{\frac{1}{2} - \frac{1}{p}} \left(\frac{W_\lambda}{W} \right)^{1/2} \| g \|_{L^2}.  $$
	
	We have $W_\lambda \le W$, and the total power of $W_\lambda$ on the right-hand side is positive, and so we get the bound
	
	$$  \| E g_\lambda \|_{L^p(Y)} \lessapprox \left(\frac{M}{W} \right)^{\frac{1}{2} - \frac{1}{p}}  \| g \|_{L^2}.  $$
	
	Since this estimate holds for every $\lambda$, the theorem is proven. \end{proof}

\section{Proof of Proposition \ref{mainest2}} \label{sec:mainest2}

In this section, we prove Proposition \ref{mainest2}.  The proof is based on adding a refined Strichartz estimate (Corollary \ref{correfdec}) to the framework of \cite{Liu18}.  We want to show that if $\alpha > 5/4$, then 

		$$	\int_{E_2}  \| d^x_*(\mug) \|_{L^2}^2 d \mu_2(x) < + \infty. $$

We follow Liu's approach from \cite{Liu18}.  Let $\sigma_t$ be the normalized arc length measure on the circle of radius $t$ (normalized so that the total measure is 1).  Then

$$ \| d^x_*(\mug) \|_{L^2}^2 = \int_0^\infty | \mug * \sigma_t (x)|^2 t^2 dt. $$

Now we would like to make use of Liu's identity:

\begin{theorem} \label{Liu} (\cite{Liu18}) For any function $f: \RR^2 \rightarrow \CC$, and any $x \in \RR^2$, 
	
	$$  \int_0^\infty | f * \sigma_t (x)|^2 t dt = \int_0^\infty | f * \hat \sigma_r (x)|^2 r dr.  $$
	
\end{theorem}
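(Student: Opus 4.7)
The plan is to reduce both sides of the identity to the Plancherel theorem for the Hankel transform of order zero. First I would use Fourier inversion to write
\[
f*\sigma_t(x) = \int_{\RR^2} \hat f(\xi)\,\hat\sigma_t(\xi)\, e^{2\pi i x\cdot \xi}\,d\xi,
\]
and exploit the rotational symmetry of $\sigma_t$, which gives $\hat\sigma_t(\xi) = J_0(2\pi t|\xi|)$. Passing to polar coordinates $\xi = r\omega$ on $\RR^2$, I would introduce the auxiliary one-variable function
\[
G(r) := \int_{S^1} \hat f(r\omega)\, e^{2\pi i r x\cdot \omega}\, d\omega,
\]
so that $f*\sigma_t(x) = \int_0^\infty J_0(2\pi t r)\, G(r)\, r\, dr$.

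Next, a direct computation, unfolding the convolution $f*\hat\sigma_r$ and parametrizing the circle $S^1_r$ by $y = r\omega$, yields $f*\hat\sigma_r(x) = \frac{1}{2\pi} G(r)$. At this point both sides of the desired identity are expressed purely in terms of the single function $G$: writing $\mathcal{H}_0 h(\tau) := \int_0^\infty J_0(\tau r)\,h(r)\,r\, dr$ for the zero-order Hankel transform, the left side is $\int_0^\infty |\mathcal{H}_0 G(2\pi t)|^2\, t\, dt$ and the right side is $(2\pi)^{-2}\int_0^\infty |G(r)|^2\, r\, dr$.

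Finally, Plancherel's theorem for the Hankel transform, $\int_0^\infty |\mathcal{H}_0 h(\tau)|^2 \tau\, d\tau = \int_0^\infty |h(r)|^2\, r\, dr$, together with the change of variables $\tau = 2\pi t$ (which produces exactly the missing $(2\pi)^{-2}$ factor), identifies the two sides. Alternatively, one can bypass explicit Hankel machinery by computing $|f*\sigma_t(x)|^2$ via a double Fourier integral and using the distributional identity $\int_0^\infty J_0(at)J_0(bt)\,t\,dt = \delta(a-b)/a$ for $a,b>0$, which collapses the resulting expression directly onto integrals over circles of common radius $r$; this is the approach Liu takes in \cite{Liu18}.

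The main technical issue is justifying these formal Fourier manipulations for the class of $f$ that actually appears, since in our application $f = \mug$ is only a distribution built from a Frostman-type measure and neither side is a priori finite. I would handle this by first proving the identity for Schwartz $f$, where every integral converges absolutely and Fubini applies throughout, and then extending by a standard regularization argument (convolve $\mug$ with a smooth approximate identity, apply the identity, and pass to the limit), using that both sides are lower-semicontinuous in $f$ under such approximations so that finiteness of the right-hand side controls finiteness of the left.
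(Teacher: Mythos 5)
Your argument is correct and, as you yourself note, is essentially the route Liu takes in \cite{Liu18}: write $f\ast\sigma_t$ as a zero-order Hankel transform of the angular average $G(r)=\int_{S^1}\hat f(r\omega)e^{2\pi i r x\cdot\omega}\,d\omega$, identify $f\ast\hat\sigma_r(x)=\frac{1}{2\pi}G(r)$ via $\widehat{\hat\sigma_r}=\sigma_r$, and then invoke Hankel--Plancherel (equivalently the distributional orthogonality $\int_0^\infty J_0(at)J_0(bt)\,t\,dt=\delta(a-b)/a$). The normalization check works out: with $\sigma_t$ a probability measure one has $\hat\sigma_t(\xi)=J_0(2\pi t|\xi|)$ and $d\sigma_r=\frac{1}{2\pi}\,d\omega$ on $|\xi|=r$, so both sides equal $(2\pi)^{-2}\int_0^\infty|G(r)|^2\,r\,dr$.

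It is worth flagging, however, that this paper does not prove Theorem~\ref{Liu} by this method; it simply cites \cite{Liu18} for the Euclidean case, and when it needs the result for a general norm $\|\cdot\|_K$ it gives a \emph{different} argument (Lemma~\ref{LiuK}). That argument replaces the exact Bessel identity by Herz's stationary-phase asymptotics for $\widehat{\sigma^K}$, splits $F(t)=t^{1/2}\sigma^K_t\ast f(x)$ into two oscillatory main terms $F_1,F_2$ plus a lower-order remainder, identifies $\hat F_1(r)$ with $f\ast\widehat{\sigma^{K^*}_r}(x)\,r^{1/2}$ up to constants, and applies one-dimensional Plancherel in $t$. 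Your Hankel-transform proof buys an exact identity but depends crucially on rotational invariance; the paper's Herz-expansion proof sacrifices exactness (one only gets $\lesssim$ plus $\dot H^{-1/2}$-type error terms, which must then be checked finite) but works for any smooth strictly convex $K$, which is precisely what Section~\ref{sec:genmetric} requires. Your closing remark on the regularization issue for rough $f$ is well taken and mirrors the care the paper takes in controlling the remainder terms via energy integrals of $\mu_1$.
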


Notice that on the left-hand side we have $t dt$ instead of $t^2 dt$.  If $x \in E_2$, then $ \mu_1 * \sigma_t(x) = 0$ unless $t \sim 1$ because $E_1$ and $E_2$ are contained in the unit disk and the distance between them is $\gtrsim 1$.  Therefore, we can write

$$ \int_0^\infty | \mu_1 * \sigma_t(x)|^2 t^2 dt \sim \int_0^\infty | \mu_1 * \sigma_t(x)|^2 t dt. $$

We would like to write the same thing with $\mug$ in place of $\mu_1$.  To justify this, we need to argue that $\mug$ is essentially supported in a small neighborhood of $E_1$, which we now check.

\begin{lemma} Let $A$ be the complement of the $R_0^{-1/2 + \delta}$-neighborhood of $E_1$.  Then
	
	$$ \int_{A} |\mug| = \RD(R_0) \textrm{ and } \max_{x \in A} |\mug(x)| = \RD(R_0). $$
\end{lemma}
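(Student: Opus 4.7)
The plan is to exploit the fact that $\mug$ is built from convolutions of $\mu_1$ with the Schwartz functions $\psi_0^\vee$ and $\psi_{j,\tau}^\vee$, each of which is concentrated on a small set around the origin with rapid decay away from it. Since every $x \in A$ lies at distance at least $R_0^{-1/2+\delta}$ from every point of $\supp \mu_1 \subset E_1$, these convolutions must be negligible there.

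Concretely, I would first recall that $M_T\mu_1 = \eta_T(\psi_{j,\tau}^\vee * \mu_1)$, where $\psi_{j,\tau}^\vee$ has $L^\infty$ norm $\sim R_j^{3/2}$, is essentially concentrated on an $R_j^{-1/2}\times R_j^{-1}$ rectangle, and enjoys the rapid decay bound
$$|\psi_{j,\tau}^\vee(z)| \lesssim_N R_j^{3/2}\bigl(1+R_j^{1/2}|z|\bigr)^{-N}.$$
For $x \in A$ and $y \in \supp\mu_1$ we have $|x-y|\ge R_0^{-1/2+\delta}$, and since $R_j = 2^j R_0$, we can rewrite the decay factor as $R_j^{1/2}|x-y| \ge 2^{j/2} R_0^\delta$. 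Integrating against the probability measure $\mu_1$ then yields
$$\sup_{x\in A}\,|(\psi_{j,\tau}^\vee*\mu_1)(x)| \;\lesssim_N\; R_j^{3/2}\,2^{-jN/2}\,R_0^{-N\delta}.$$

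From here the two claims reduce to counting tubes. For the pointwise bound, each $x$ lies in $\lesssim 1$ tube $T\in\TT_{j,\tau}$ per $\tau$, and the number of blocks $\tau$ at scale $R_j$ is $\sim R_j^{1/2}$, giving a total of $\lesssim R_j^{1/2}$ good tubes containing $x$ at each scale. Multiplying by the sup estimate and summing over $j\ge 1$ produces
$$\sum_{j\ge 1}\, R_j^{1/2}\cdot R_j^{3/2}\,2^{-jN/2}R_0^{-N\delta} \;=\; R_0^{2-N\delta}\sum_{j\ge 1}2^{j(2-N/2)},$$
which is a geometric series that converges and becomes $\RD(R_0)$ once $N$ is chosen large (e.g.\ $N\ge 5$ and $N\delta\ge 100$). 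For the $L^1$ bound I would instead use $\|\eta_T\|_{L^1}\lesssim R_j^{-1/2+\delta}$ and the fact that $|\TT_j|\lesssim R_j^{1/2}\cdot R_j^{1/2-\delta}=R_j^{1-\delta}$, which combines to give the same $R_j^2\cdot 2^{-jN/2}R_0^{-N\delta}$ per-scale bound. Finally, $M_0\mu_1=\psi_0^\vee*\mu_1$ is handled by the same rapid-decay argument applied with the parameters of $\psi_0$, since $\psi_0^\vee$ is localized at scale $R_0^{-1}\ll R_0^{-1/2+\delta}$.

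The only point that needs care is the interplay between the growth in the number of tubes and the rapid decay. The key observation, and the place where the argument would fail if we tried to use the naive decay $R_0^{-N\delta}$ uniformly in $j$, is that the decay factor actually improves at larger scales like $2^{-jN/2}$; this improvement is what keeps the geometric sum over $j$ summable. All other steps are routine verifications with Schwartz tails, so I would present them in a single compact argument organized around the estimate for $|\psi_{j,\tau}^\vee*\mu_1|$ on $A$.
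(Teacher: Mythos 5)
Your proof is correct and follows the same route as the paper's, which disposes of the lemma in two sentences by noting that $\psi_0^\vee$ and $\psi_{j,\tau}^\vee$ are essentially supported near the origin and $\mu_1$ is supported on $E_1$; you simply make that sketch rigorous by writing out the Schwartz decay $|\psi_{j,\tau}^\vee(z)| \lesssim_N R_j^{3/2}(1+R_j^{1/2}|z|)^{-N}$, converting the separation $|x-y|\ge R_0^{-1/2+\delta}$ into the factor $(2^{j/2}R_0^\delta)^{-N}$, and checking that the $2^{-jN/2}$ gain beats the $R_j^2$ growth from counting blocks and tubes so the sum over $j$ converges to $\RD(R_0)$.
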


\begin{proof} By definition,
	
	$$ \mug = M_0 \mu_1 + \sum_{j, \tau} \sum_{\substack{T\in \TT_{j,\tau}\\T \textrm{ good }}} M_T \mu_1 = $$
	
	$$ = \psi_0^\vee * \mu_1 + \sum_{j, \tau} \sum_{T} \eta_T (\psi_{j, \tau}^\vee * \mu_1). $$
	
Now $\psi_0^{\vee}$ is essentially supported on a ball of radius $R_0^{-1}$ and $\psi_{j, \tau}^\vee$ is essentially supported on a rectangle of dimensions $R_j^{-1/2} \times R_j^{-1}$ centered at the origin.  Since $\mu_1$ is supported on $E_1$, the result follows.
\end{proof}

Since $\mug$ is essentially supported in a thin neighborhood of $E_1$, we can indeed say that for any $x \in E_2$,

$$ \int_0^\infty | \mug * \sigma_t(x)|^2 t^2 dt \lesssim \int_0^\infty | \mug * \sigma_t(x)|^2 t dt. $$

Now we can apply Theorem \ref{Liu} to get

$$ 	\int_{E_2}  \| d^x_*(\mug) \|_{L^2}^2 d \mu_2(x) \lesssim \int_{E_2}  \int_0^\infty | \mug * \hat \sigma_r (x)|^2 r dr d\mu_2(x) = $$

\begin{equation} \label{liuint} = \int_0^\infty \left(\int_{E_2} | \mug * \hat \sigma_r(x)|^2 d \mu_2(x) \right) r dr.  \end{equation}

We will use Theorem \ref{refdec} to estimate the inner integral for each $r$.

\begin{proposition} \label{mug*sr} For any $\alpha > 0$, $r>0$:
	
	$$ \int_{E_2} | \mug * \hat \sigma_r(x)|^2 d \mu_2(x) \le C(R_0) r^{- \frac{\alpha + 1}{3} + \epsilon} r^{-1} \int  | \hat \mu_1 |^2 \psi_r d \xi, $$

\noindent where $\psi_r$ is a weight function which is $\sim 1$ on the annulus $r-1 \le |\xi| \le r+1$ and decays off of it.  To be precise, we could take

$$ \psi_r(\xi) = \left( 1 + | r- |\xi|| \right)^{-100}.$$

\end{proposition}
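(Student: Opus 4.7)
My plan is to combine Liu's framework with a wave packet decomposition of $\mug * \hat\sigma_r$ and the refined decoupling result (Corollary \ref{correfdec}). First I would reduce to a single scale: since $f_T := M_T \mu_1 * \hat\sigma_r$ has Fourier support essentially in $\tau \cap S^1_r$, only tubes $T \in \TT_j$ with $R_j \sim r$ contribute non-negligibly. Setting $R = R_j \sim r$ and letting $\WW$ be the good tubes at this scale, up to $\RD(R_0)$ errors I would have $\mug * \hat\sigma_r = \sum_{T \in \WW} f_T$, where each $f_T$ is essentially supported on $T$ (of dimensions $R^{-1/2+\delta} \times 1$) with Fourier support on a cap $\theta_T \subset S^1_r$ of dimensions $R^{1/2} \times 1$---matching the wave packet setup of Corollary \ref{correfdec}.

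A preliminary ingredient is the Plancherel-type bound $\sum_{T \in \WW} \|f_T\|_{L^2(B_2)}^2 \lesssim r^{-2} \int |\hat\mu_1|^2 \psi_r\,d\xi$. To establish this, for each cap $\tau$ set $g_\tau = M_{j,\tau}\mu_1 * \hat\sigma_r$, so $\sum_{T \in \TT_{j,\tau}} \|f_T\|_{L^2}^2 \sim \|g_\tau\|_{L^2(B_2)}^2$. Take a Schwartz bump $\phi \equiv 1$ on $B_2$ and compute $\|\phi g_\tau\|_{L^2}^2 = \|\widehat{\phi g_\tau}\|_{L^2}^2$ via Cauchy--Schwarz against the measure $\sigma_r$: the key observation is that $\int |\hat\phi(\xi-\eta)|\,d\sigma_r(\eta) \lesssim r^{-1}$ near $S^1_r$, while $\int |G|^2 d\sigma_r$ carries a further $r^{-1}$ from the normalization of $\sigma_r$. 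Summing over $\tau$ at scale $R_j \sim r$ then yields the claim.

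Next I would pigeonhole the $R^{-1/2}$-cubes $Q$ covering $E_2$ by dyadic $\beta \sim \mu_2(Q) \le R^{-\alpha/2}$, by dyadic $M \sim \#\{T \in \WW_\lambda : Q \cap T \ne \emptyset\}$, and by dyadic $\lambda \sim \|f_T\|_{L^6}$. Writing $Y = Y_{\beta,M,\lambda}$, $N = \#\{Q \subset Y\}$, $W_\lambda = |\WW_\lambda|$, the goodness condition $\mu_2(2T) \lesssim R^{-1/2+100\delta}$ will give the crucial incidence bound $MN \lesssim W_\lambda R^{-1/2+100\delta}/\beta$: each good tube meets at most $R^{-1/2+100\delta}/\beta$ cubes of $Y$, since their total $\mu_2$-mass cannot exceed $\mu_2(2T)$.

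The final step will combine refined decoupling with a density bound on $\mu_2|_Y$. Exploiting that $\mu_2$ restricted to $Y_{\beta,M,\lambda}$ has uniform density $\sim \beta R$ at the $R^{-1/2}$ scale, I aim to convert to Lebesgue $L^6$ via
$$\int_Y |f|^2 d\mu_2 \;\lesssim\; \beta R \cdot |Y|^{2/3}\,\|f\|_{L^6(Y)}^2$$
(using Hölder on the $Q$-sum) and then apply Corollary \ref{correfdec}:
$$\|f\|_{L^6(Y)}^2 \;\lesssim\; R^{O(\epsilon)}(M/W_\lambda)^{2/3}\sum_{T \in \WW_\lambda}\|f_T\|_{L^6}^2 \;\lesssim\; R^{O(\epsilon)+1/3}(M/W_\lambda)^{2/3}\,r^{-2}\int|\hat\mu_1|^2\psi_r\,d\xi,$$
using the wave packet ratio $\|f_T\|_{L^6}^2 \sim R^{1/3}\|f_T\|_{L^2}^2$ (from $|T| \sim R^{-1/2}$). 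Substituting $NM/W_\lambda \lesssim R^{-1/2+O(\delta)}/\beta$ and $\beta \le R^{-\alpha/2}$, then summing dyadically over the pigeonhole parameters $(\beta, M, \lambda)$, will produce the required exponent $r^{-(\alpha+1)/3 + \epsilon} r^{-1}$. The main obstacle will be justifying the density conversion $\int_Y |f|^2 d\mu_2 \lesssim (\beta R)\int_Y |f|^2\,dx$: since $|f|^2$ is only locally constant at scale $R^{-1}$ (not $R^{-1/2}$), a naive pointwise Bernstein argument loses a factor of $R^{1/3}$ and falls short of the sharp bound. Closing this gap---by carefully exploiting the $R^{-1/2}$-cube structure of $Y$ together with the wave packet decomposition, so that the $\mu_2$-density at the wave packet scale $R^{-1/2}$ is what enters rather than the worst-case density at scale $R^{-1}$---is what makes the argument produce the sharp exponent $(\alpha+1)/3$ required for the theorem.
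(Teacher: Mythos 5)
Your proposal follows the paper's overall architecture: reduce to $R_j \sim r$, treat the $f_T := \eta_1(M_T\mu_1 * \hat\sigma_r)$ as wave packets microlocalized to $(T,\theta(T))$, pigeonhole the $r^{-1/2}$-cubes by $\mu_2$-mass and by tube multiplicity $M$, prove an incidence bound using the goodness condition $\mu_2(2T)\lesssim R^{-1/2+100\delta}$, apply Corollary \ref{correfdec}, and close with a Plancherel/orthogonality bound relating $\sum_T\|f_T\|^2$ to $r^{-1}\int|\hat\mu_1|^2\psi_r$. All of that matches the paper's proof.

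However, the step you flag as your ``main obstacle'' is a real gap, and you do not close it. You want the conversion
$$\int_Y |f|^2\, d\mu_2 \;\lesssim\; (\beta R)\int_Y |f|^2\, dx,$$
but this requires $\mu_2$ to have density $\lesssim \beta R$ against Lebesgue at the scale where $|f|^2$ is locally constant, which is $\sim r^{-1}$, not $r^{-1/2}$. The only pointwise bound available at scale $r^{-1}$ is the Frostman bound $\mu_2(B(x,r^{-1})) \lesssim r^{-\alpha}$, i.e.\ density $r^{2-\alpha}$. Within a cube $Q$ with $\mu_2(Q)\sim\beta$ the measure may concentrate on a single $r^{-1}$-ball, giving density $\beta r^2$; there is no reason the density is $\sim\beta r$ uniformly on $Y$. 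Your suggestion to make ``the $\mu_2$-density at the wave packet scale $R^{-1/2}$ enter rather than the worst-case density at scale $R^{-1}$'' does not have teeth: $|f|^2$ is not locally constant at scale $R^{-1/2}$, so the $R^{-1/2}$-cube mass cannot be upgraded to a pointwise density bound. In fact, carrying your bound to the end gives the exponent $r^{-\alpha/6-5/3}$, which is strictly stronger than the paper's $r^{-(\alpha+1)/3-1}$ for $\alpha<2$ and would yield a threshold below $5/4$; this is a sign the conversion cannot hold.

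The paper's way around this is to avoid pulling out a density entirely. Replace $d\mu_2$ by $\mu_2 * \eta_{1/r}$ (justified by local constancy at scale $r^{-1}$) and apply H\"older with exponents $(3,3/2)$:
$$\int_Y |f_\lambda|^2\, d\mu_2 \;\lesssim\; \Bigl(\int_Y |f_\lambda|^6\Bigr)^{1/3}\Bigl(\int_Y |\mu_2*\eta_{1/r}|^{3/2}\Bigr)^{2/3}.$$
The second factor is bounded by $\|\mu_2*\eta_{1/r}\|_{L^\infty}^{1/2}\cdot \mu_2(Y) \lesssim r^{(2-\alpha)/2}\mu_2(Y)$, so the worst-case density $r^{2-\alpha}$ only enters to the power $1/2$. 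The first factor is handled by Corollary \ref{correfdec} together with the incidence Lemma \ref{mu2Y}, which produces a compensating factor $\mu_2(Y_{\gamma,M})^{-1/3}$; the powers of $\mu_2(Y)$ cancel exactly. This H\"older-with-smoothed-measure step, not a pointwise density conversion, is what produces the sharp exponent $(\alpha+1)/3$, and it is the ingredient your proposal is missing.
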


The conclusion here is very similar to saying 

$$\int_{E_2}| \mug * \hat \sigma_r(x)|^2 d \mu_2(x) \le C(R_0) r^{-\frac{\alpha +1}{3} + \epsilon} \| \hat \mu_1 \|_{L^2(d \sigma_r)}^2.$$  

\noindent For technical reasons, we have the bound in the form above.   Before turning to the proof of Proposition \ref{mug*sr}, let us see how it implies Proposition \ref{mainest2}.  Like most previous work on the Falconer problem, the proof uses the idea of the $\beta$-dimensional energy of a measure.  Recall that this energy is given by

$$ I_\beta (\mu) := \int |x-y|^{-\beta} \mu(x) \mu(y). $$

If a measure $\mu$ on the unit ball obeys $\mu (B(x,r)) \lesssim r^\alpha$, then $I_\beta(\mu)$ is finite for every $\beta < \alpha$  (cf. Lemma 8.3 of \cite{W03}).  In particular, $I_\beta(\mu_1) < \infty$ for every $\beta < \alpha$.  There is also a Fourier representation for $I_\beta(\mu)$ (cf. Proposition 8.5 of \cite{W03}): if $\mu$ is a measure on $\RR^n$, then 

$$ I_{\beta} (\mu) = c_{n, \beta}  \int_{\RR^n} |\xi|^{-(n - \beta)} |\hat \mu(\xi) |^2 d \xi. $$

\begin{proof} [ Proof of Proposition \ref{mainest2} using Proposition \ref{mug*sr}] By (\ref{liuint}),
	
\begin{equation} \label{nirvanaisnear} 	\int_{E_2}  \| d^x_*(\mug) \|_{L^2}^2 d \mu_2(x) \lesssim  \int_0^\infty \left(\int_{E_2} | \mug * \hat \sigma_r(x)|^2 d \mu_2(x) \right) r dr. \end{equation}

Plugging in Proposition \ref{mug*sr} to bound the inner integral, we get

$$ \lesssim_{R_0} \int_0^\infty \int_{\RR^2}  r^{- \frac{\alpha + 1}{3} + \epsilon} \psi_r(\xi) | \hat \mu_1(\xi)|^2 d \xi dr \lesssim  \int_{\RR^2} |\xi|^{- \frac{\alpha + 1}{3} + \epsilon} | \hat \mu_1 (\xi)|^2 d \xi \sim I_{\beta} (\mu_1) $$

\noindent with $\beta = 2 - \frac{\alpha + 1}{3} + \epsilon$.   We know that $I_\beta(\mu_1) < \infty$ as long as $\beta < \alpha$, so we get the desired bound as long as

$$ 2 - \frac{\alpha + 1}{3} < \alpha. $$

\noindent This is equivalent to $\alpha > 5/4$. 
	
\end{proof}

\vskip.125in 

\begin{proof}[Proof of Proposition \ref{mug*sr}] Recall that

$$ \mug = M_0 \mu_1 + \sum_{j \ge 1, \tau} \sum_{T \in \TT_{j, \tau}, T \textrm{ good}} M_T \mu_1. $$

When we convolve with $\hat \sigma_r$ the only terms that remain are those with Fourier support intersecting the circle of radius $r$.  The interesting case is when $r > 10 R_0$.  We will return at the end to the case $R < 10 R_0$.  Assuming $r > 10 R_0$, $\mug * \hat \sigma_r$ is essentially equal to 

$$ \sum_{R_j \sim r} \sum_\tau \sum_{T\in \TT_{j, \tau} T \textrm{ good}} M_T \mu_1 * \hat \sigma_r. $$

Let $\eta_1$ be a bump function adapted to the unit ball.  We define 

$$ f_T = \eta_1 \left( M_T \mu_1 * \hat \sigma_r \right). $$

\noindent We claim that each $f_T$ is microlocalized in the way we would want to apply Corollary \ref{correfdec}.  If $T \in \TT_{j, \tau}$, then we let $\theta(T)$ be the 1-neighborhood of  $3 \tau \cap S^1_r$.  We claim that $\hat f_T$ is essentially supported in $\theta(T)$.  First we recall that $\widehat{M_T \mu_1}$ is essentially supported in $2 \tau$.  Therefore, the Fourier transform of $M_T \mu_1 * \hat \sigma_r$ is essentially supported in $2 \tau \cap S^1_r$.  Finally, the Fourier transform of $f_T$ is essentially supported in the 1-neighborhood of $2 \tau \cap S^1_r$, which is contained in $\theta$.  Note that $\theta$ is a rectangular block of dimensions roughly $r^{1/2} \times 1$.  

Next we claim that $f_T$ is essentially supported in $2T$.  We know that $M_T \mu_1$ is supported in $T$.  Let $\tilde \psi_\tau$ be a smooth bump function which is 1 on $2 \tau$ and rapidly decaying.  Since the Fourier transform of $M_T \mu_1$ is essentially supported on $2 \tau$, we have $M_T \mu_1 * \hat \sigma_r$ is essentially equal to $M_T \mu_1 * (\tilde \psi_\tau \sigma_r)^\wedge$.  It is standard to check by stationary phase that $(\tilde \psi_\tau \sigma_r)^\wedge$ is bounded by $\RD(r)$ on $B^2(1)$ outside of a tube of radius $r^{-1/2 + \delta}$ in the direction of $\tau$ passing through the origin.  So $M_T \mu_1 * (\tilde \psi_\tau \sigma_r)^\wedge$ is negligible on $B^2(1) \setminus 2T$.  So $f_T$ is essentially supported on $2T$.

We have $ \mug * \hat \sigma_r$ is essentially equal to $\sum_{T \textrm{ good}} f_T$.  Next we sort the $f_T$ according to their $L^p$ norms.

$$ \WW_\lambda:= \{ T: \| f_T \|_{L^p} \sim \lambda \}.$$

$$ f_\lambda := \sum_{T \in \WW_\lambda} f_T. $$

Since the number of scales $\lambda$ is $\lesssim \log r$, it suffices to prove the bound

$$ \int | f_\lambda (x) |^2 d\mu_2(x) \lesssim r^{- \frac{\alpha + 1}{3} + \epsilon} \| \hat \mu_1 \|_{L^2(d \sigma_r)}^2. $$

Next we divide the unit ball into $r^{-1/2}$-squares $q$ and sort them.  We let

$$ \QQQ_{\gamma, M} := \{ r^{-1/2} \textrm{ squares } q: \mu_2(q) \sim \gamma \textrm{ and } q \textrm{ intersects } \sim M \textrm{ tubes } T \in \WW_\lambda \}. $$

We let $Y_{\gamma, M} = \bigcup_{q \in \QQQ_{\gamma, M}} q$.  Since there are only $\sim \log^2 r$ choices of $\gamma, M$, it suffices to bound $\int_{Y_{\gamma,M}} | f_\lambda|^2 d\mu_2$.  Next we bound the measure of $Y_{\gamma, M}$.

\begin{lemma} \label{mu2Y} For any $\gamma, M$,
	
	$$ \mu_2 \left( Y_{\gamma, M} \right) \lesssim \frac{ | \WW_\lambda | r^{-1/2 + 100 \delta} }{M}. $$
\end{lemma}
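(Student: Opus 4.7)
The plan is to prove this bound by a direct double-counting argument that exploits the key fact that every tube $T \in \WW_\lambda$ is \emph{good}. Since $\WW_\lambda$ is a subset of the good tubes in $\TT_{j,\tau}$ with $R_j \sim r$, the definition of goodness immediately gives $\mu_2(T) < r^{-1/2+100\delta}$ for each such $T$. A minor technical point is that we will want the slightly weaker bound $\mu_2(2T) \lesssim r^{-1/2+100\delta}$; this follows either by covering $2T$ by $O(1)$ tubes at the same scale, or simply by absorbing the constant.

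Next, I would note that each square $q \in \QQQ_{\gamma, M}$ has diameter $O(r^{-1/2})$, while the tubes $T \in \WW_\lambda$ have thickness $r^{-1/2+\delta} \gg r^{-1/2}$. Consequently, whenever $q \cap T \neq \emptyset$, we have $q \subset 2T$ (for $R_0$ large enough so that $\delta$-gain swamps the constants). This lets us convert the incidence information between squares and tubes into an inequality about $\mu_2$-mass.

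The double-counting step is then:
\begin{equation*}
\sum_{T \in \WW_\lambda} \mu_2(2T) \;\ge\; \sum_{T \in \WW_\lambda} \sum_{\substack{q \in \QQQ_{\gamma, M}\\ q \cap T \neq \emptyset}} \mu_2(q) \;=\; \sum_{q \in \QQQ_{\gamma, M}} \mu_2(q) \cdot \#\{T \in \WW_\lambda : q \cap T \neq \emptyset\}.
\end{equation*}
The squares in $\QQQ_{\gamma, M}$ are essentially disjoint (they come from a fixed tiling at scale $r^{-1/2}$), so the inner incidence count is $\sim M$ by the definition of $\QQQ_{\gamma, M}$. Thus the right side is $\gtrsim M \cdot \mu_2(Y_{\gamma, M})$. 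On the other hand, using goodness on each term of the left side yields $\sum_{T \in \WW_\lambda} \mu_2(2T) \lesssim |\WW_\lambda| \cdot r^{-1/2+100\delta}$. Dividing by $M$ gives the advertised inequality.

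There is no real obstacle here—this is a clean pigeonholing argument, and the only thing to be careful about is the bookkeeping between $T$ and $2T$ and the comparison between the $r^{-1/2}$-square scale and the $r^{-1/2+\delta}$ tube-thickness scale, both of which are harmless provided $R_0$ (hence $r$) is large enough relative to the implicit $\delta$-dependent constants.
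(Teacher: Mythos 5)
Your proof is correct and follows essentially the same double-counting argument as the paper: both rely on the goodness bound $\mu_2(2T)\lesssim r^{-1/2+100\delta}$, the observation that a $q$ meeting $T$ lies in $2T$, and the incidence count $\gtrsim M$ per square. The only cosmetic difference is that you sum $\mu_2(q)$ directly, whereas the paper first counts incidences and uses $\mu_2(q)\sim\gamma$ as an intermediate bookkeeping step; the two are interchangeable.
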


\begin{proof} This is a double counting argument.  We count in two ways the size of the set of incidences,
	
	$$ I :=  \{ (q, T) \in \QQQ_{\gamma, M} \times \WW_\lambda: q \textrm{ intersects } T \}. $$
	
Since each tube $T \in \WW_\lambda$ is good, each tube $T$ has $\mu_2( 2 T) \lesssim r^{-1/2 + 100 \delta}$.  Therefore, the number of $q \in \QQQ_{\gamma, M}$ that $T$ intersects is $\lesssim 
 \gamma^{-1} r^{-1/2 + 100 \delta}$.  Therefore,
 
 $$ I \lesssim \gamma^{-1} r^{-1/2 + 100 \delta} | \WW_\lambda |. $$
 
 On the other hand, each cube $q \in \QQQ_{\gamma, M}$ intersects $\gtrsim M$ tubes $T \in \WW_\lambda$.  Therefore,
 
 $$ I \gtrsim | \QQQ_{\gamma, M} | M. $$
 
 Comparing these bounds for $I$, we get 
 
 $$|\QQQ_{\gamma, M}| \lesssim \frac{ r^{-1/2 + 100 \delta} | \WW_\lambda |}{\gamma M}. $$
 
 Since each cube $q \in \QQQ_{\gamma, M}$ has $\mu_2 (q) \sim \gamma$, we get
 
 $$ \mu_2 (Y_{\gamma, M}) \lesssim \frac{ | \WW_\lambda | r^{-1/2 + 100 \delta} }{M}. $$

\end{proof}

Now we are ready to bound

$$ \int_{Y_{\gamma,M}} | f_\lambda|^2 d\mu_2. $$

The Fourier support of $f_\lambda$ is essentially contained in the 1-neighborhood of $S^1_r$, and so $f_\lambda$ is (morally) locally constant at scale $\sim r^{-1}$.  Therefore we can replace $d \mu_2$ by $\mu_2 * \eta_{1/r}$, where $\eta_{1/r}$ is a bump function with integral 1 essentially supported on a ball of radius $1/r$.  Then we can use H\"older to bound

$$ \int_{Y_{\gamma,M}} | f_\lambda|^2 d\mu_2 \lesssim \left(\int_{Y_{\gamma,M}} | f_\lambda|^6 \right)^{1/3} \left(\int_{Y_{\gamma, M}} |\mu_2 * \eta_{1/r}|^{3/2} \right)^{2/3}. $$

To bound the first factor, we use Corollary \ref{correfdec} with $W = | \WW_\lambda|$ wave packets and multiplicity $M$.  We get

$$ \| f_\lambda \|_{L^6(Y_{\gamma, M})} \lessapprox \left(\frac{M}{| \WW_\lambda|} \right)^{1/3} \left( \sum_{T \in \WW_\lambda} \| f_T \|_{L^6}^2 \right)^{1/2}. $$

By Lemma \ref{mu2Y}, we can bound $M/ | \WW_\lambda|$ to get

$$ \lessapprox \left(\frac{r^{-1/2 + 100 \delta}}{\mu_2(Y_{\gamma, M})} \right)^{1/3} \left(\sum_{T \in \WW_\lambda} \| f_T \|_{L^6}^2 \right)^{1/2}. $$

To bound the second factor, we note that $\mu_2$ of a ball of radius $r^{-1}$ is at most $r^{-\alpha}$.  Therefore, 

$$\| \mu_2 * \eta_{1/r} \|_{L^\infty} \lesssim r^{2 - \alpha}. $$

And so

$$ \int_{Y_{\gamma, M}} | \mu_2 * \eta_{1/r} |^{3/2} \lesssim \left(r^{2 - \alpha} \right)^{1/2} \int_{Y_{\gamma, M}} d \mu_2 * \eta_{1/r} \sim r^{1 - \frac{\alpha}{2}} \mu_2(Y_{\gamma, M}). $$

Plugging in these two bounds, we get

$$ \int_{Y_{\gamma,M}} | f_\lambda|^2 d\mu_2 \lesssim r^{O(\delta)} r^{-1/3} \mu_2(Y_{\gamma, M})^{-2/3} \sum_{T \in \WW_\lambda} \| f_T \|_{L^6}^2 \cdot r^{\frac{2}{3} - \frac{\alpha}{3}} \mu_2(Y_{\gamma, M})^{2/3}. $$

Notice that the powers of $\mu_2(Y_{\gamma, M})$ cancel, leaving

$$ \int_{Y_{\gamma,M}} | f_\lambda|^2 d\mu_2 \lesssim r^{O(\delta)} r^{\frac{1 - \alpha}{3}} \sum_{T \in \WW_\lambda}  \| f_T \|_{L^6}^2.  $$

Next we record an elementary bound for $\| f_T \|_{L^6}$.  Since $f_T$ is essentially supported on $T$, $\| f_T \|_{L^6} \lesssim |T|^{1/6} \| f_T \|_{L^\infty} \sim r^{-1/12} \| f_T \|_{L^\infty}$.  Recall that

$$ f_T = \eta_1 (M_T \mu_1 * \hat \sigma_r) = \eta_1 \int_{S^1_r} \widehat{M_T\mu_1} d \sigma_r. $$

\noindent Since $\widehat {M_T \mu_1}$ restricted to $S^1_r$ is essentially supported on $\theta(T)$, we get

$$ \| f_T \|_{L^\infty} \lesssim \sigma_r (\theta(T))^{1/2} \| \widehat{M_T \mu_1} \|_{L^2(d \sigma_r)} \sim r^{-1/4}  \| \widehat{M_T \mu_1} \|_{L^2(d \sigma_r)}. $$

\noindent Therefore 

$$ \| f_T \|_{L^6} \lesssim r^{-1/3}  \| \widehat{M_T \mu_1} \|_{L^2(d \sigma_r)}. $$

Plugging into the last bound, we get

$$ \int_{Y_{\gamma,M}} | f_\lambda|^2 d\mu_2 \lesssim r^{O(\delta)} r^{\frac{-1 - \alpha}{3}} \sum_{T \in \WW_\lambda}  \| \widehat{M_T \mu_1} \|_{L^2(d \sigma_r)}^2.  $$

To finish the proof of Proposition \ref{mug*sr}, it just remains to check that 

\begin{equation} \label{orth} \sum_{R_j \sim r} \sum_{\tau} \sum_{T \in \TT_{j,\tau}}  \int |\widehat{M_T \mu_1}|^2 d \sigma_r \lesssim r^{-1}  \int | \hat \mu_1|^2 \psi_r d\xi. \end{equation}

Morally, we are showing that the $\widehat{M_T \mu_1}$ are approximately orthogonal with respect to $d \sigma_r$ and/or $\psi_r$.  The pieces $\widehat{M_T \mu_1}|_{S^1_r}$ correspond to the wave packet decomposition of $\hat \mu_1 * \hat \sigma_r$.  It's a standard fact that the wave packets in a wave packet decomposition are approximately orthogonal.  (For instance, see Section 3 of \cite{GuthII} for related orthogonality arguments.)  But because of the direction of \eqref{orth}, it takes some extra care to be completely rigorous.  In particular, it makes matters easier to put $\psi_r$ instead of $d \sigma_r$ on the right-hand side of (\ref{orth}), although we're not sure whether this is necessary.  Now we turn to the details.

Recall that $\psi_r$ is a weight function which is $\sim 1$ on the annulus $r-1 \le |\xi| \le r+1$ and then rapidly decaying.  Similarly, define $\psi_{j, \tau, r}$ to be a weight function which is roughly 1 on the intersection of $\tau$ with the annulus $r-1 \le |\xi| \le r+1$ and then rapidly decaying.  We recall that if $T \in \TT_{j, \tau}$, then $\widehat {M_T \mu_1}$ is rapidly decaying outside of $\tau$.   Since $M_T \mu_1$ is supported in $T \subset B^2(1)$, its Fourier transform is morally locally constant on scale 1.  Therefore, for any $T \in \TT_{j, \tau}$, we have

$$ \int |\widehat{M_T \mu_1}|^2 d \sigma_r \lesssim r^{-1} \int |\widehat{M_T \mu_1}|^2 \psi_{j, \tau, r} d \xi, $$

\noindent where the $r^{-1}$ comes because $\sigma_r$ is the normalized arc-length measure on $S^1_r$, which is equal to approximately $1/r$ times arc length measure.  Next we expand out

$$  r^{-1} \int |\widehat{M_T \mu_1}|^2 \psi_{j, \tau, r} d \xi = r^{-1}  \int | \hat \eta_T * (\psi_{j, \tau} \hat \mu_1) |^2 \psi_{j, \tau, r} d \xi. $$

Since $\hat \eta_T$ is essentially supported in a rectangle of dimensions $R^{1/2} \times 1$, with the long direction parallel to $S^1_r$ at points in $\tau \cap S^1_r$, we can bound

$$ r^{-1}  \int | \hat \eta_T * (\psi_{j, \tau} \hat \mu_1) |^2 \psi_{j, \tau, r} d \xi \lesssim r^{-1} \int | \hat \eta_T * (\tilde \psi_{j, \tau,r} \hat \mu_1) |^2 d \xi, $$

\noindent where $\tilde \psi_{j, \tau, r}$ is again rapidly decaying outside of $\tau \cap \{ r-1 \le |\xi| \le r\}$, but a bit more slowly than $\psi_{j, \tau, r}$.  The point of all these adjustments is that we can now apply Plancherel in a clean way:

$$ \sum_{T \in \TT_{j, \tau}} \int | \widehat{M_T \mu_1}|^2 d \sigma_r \lesssim r^{-1} \sum_{T \in \TT_{j, \tau}} \int | \eta_T|^2 | \tilde \psi_{j, \tau, r}^\vee * \mu_1|^2 dx. $$

Since any point $x$ lies in $\lesssim 1$ different $T \in \TT_{j,\tau}$, the last expression is bounded by

$$ \lesssim r^{-1} \int  | \tilde \psi_{j, \tau, r}^\vee * \mu_1|^2 dx = r^{-1} \int | \tilde \psi_{j, \tau, r} \hat \mu_1|^2 d \xi. $$

So now

\[
\begin{split}
\sum_{R_j \sim r} \sum_{\tau} \sum_{T \in \TT_{j, \tau}} \int|\widehat{M_T \mu_1}|^2 d \sigma_r 
\lesssim &r^{-1} \sum_{R_j \sim r, \tau} \int | \tilde \psi_{j, \tau, r} \hat \mu_1|^2 d \xi \\
=& r^{-1} \int \left(\sum_{R_j \sim r, \tau} \tilde \psi_{j, \tau,r}^2 \right)| \hat \mu_1|^2 d \xi. 
\end{split}
\]

The regions where $\tilde \psi_{j, \tau, r}$ are $\sim 1$ tile the annulus $r-1 \le |\xi | \le r+1$, with each point lying in $\lesssim 1$ regions.  Therefore, $\sum_{R_j \sim r,\tau}\tilde  \psi_{j, \tau, r}^2$ is $\sim 1$ on the annulus $r-1 \le \xi \le r+1$ and rapidly decaying elsewhere.  So $ \sum_{R_j \sim r,\tau}\tilde \psi_{j, \tau, r}^2 \lesssim \psi_r$, and we get 

$$ \sum_{T} \int |\widehat{M_T \mu_1}|^2 d \sigma_r \lesssim r^{-1} \int | \hat \mu_1|^2 \psi_r d \xi. $$

This gives (\ref{orth}) and finishes the proof for the main case $r > 10 R_0$.  

If $r < 10 R_0$, we give a more elementary estimate.  It is rather lossy, but the loss can be absorbed into the factor $C(R_0)$.  We write

$$ \int_{E_2} |\mug * \hat \sigma_r(x)|^2 d \mu_2(x) \le \| \mug * \hat \sigma_r \|_\infty^2 \le \| \widehat{\mug} \|_{L^1(d \sigma_r)}^2 \le \| \widehat{\mug} \|_{L^2(d \sigma_r)}^2. $$

Recall that $\mug$ is the sum of the good $M_T \mu_1$ while $\mu_1$ is the sum of all $M_T \mu_1$.  As we discussed above, the $M_T \mu_1$ are approximately orthogonal with respect to $\psi_r$, and so a similar argument to the one above shows that 

$$ \| \widehat{\mug} \|_{L^2(d \sigma_r)}^2 \lesssim r^{-1} \int |\hat \mu_1|^2 \psi_r d \xi. $$

\noindent Since $r \le 10 R_0$, we get

$$  \int_{E_2} |\mug * \hat \sigma_r(x)|^2 d \mu_2(x) \lesssim r^{-1} \int |\hat \mu_1|^2 \psi_r d \xi \lesssim  C(R_0) r^{- \frac{\alpha + 1}{3} + \epsilon} r^{-1} \int  | \hat \mu_1 |^2 \psi_r d \xi. $$

 \end{proof}

\section{Train track examples} \label{sec:traintracks}

As we mentioned in the introduction, when $\alpha < 4/3$, there are examples of measures where the Mattila integral is infinite, and the related $L^2$ integral in Liu's framework is also infinite.  The relevant sets look like several trains tracks.  These train track examples are based on the train track example in \cite{KT01} (page 151).  In this section, we discuss these measures and their properties.

\begin{proposition} For every $\alpha < 4/3$ and every $B$, there is a probability measure $\mu$ on $B^2(1)$ with the following properties:
	
\begin{enumerate}
	\item For any ball $B(x,r)$, $\mu(B(x,r)) \lesssim r^\alpha$.
	
	\item If $d(x,y) := |x-y|$, then
	
	$$ \| d_* (\mu \times \mu) \|_{L^2} > B. $$
		
\item If $d^x(y) := |x-y|$, then for every $x$ in the support of $\mu$, 

$$ \| d^x_*(\mu) \|_{L^2} > B. $$

\end{enumerate}	

\end{proposition}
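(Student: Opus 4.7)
The plan is to build $\mu$ as a uniform measure on an explicit union of rectangles, following the sketch given with Figure 1 in the introduction. Fix a large parameter $R$ (to be chosen at the end in terms of $\alpha$ and $B$) and set $N_T := \lceil R^{(\alpha-1)/2}\rceil$. Inside the unit square, place $N_T$ pairwise disjoint parallel train tracks $T_1,\dots,T_{N_T}$, each a rectangle of dimensions $R^{-1/2}\times 1$, equispaced in their short direction at distance $N_T^{-1}$. Inside each $T_i$ put $\lfloor R^{\alpha/2}\rfloor$ slats of dimensions $R^{-1/2}\times R^{-1}$ (long edge along the short edge of $T_i$), with consecutive slats separated by $R^{-\alpha/2}$ along the long axis of $T_i$. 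Let $E_R$ be the union of all slats, and set $\mu := |E_R|^{-1}\mathbf{1}_{E_R}\,dx$. By construction, each slat carries mass exactly $R^{-\alpha/2}/N_T$, and each train track carries mass $1/N_T$.

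For the Frostman bound $\mu(B(x,r))\lesssim r^\alpha$, I would check the five representative scales $r\sim R^{-1}, R^{-\alpha/2}, R^{-1/2}, N_T^{-1}, 1$, with the worst case always attained when $B(x,r)$ is centered at a slat. At $r=R^{-1/2}$ the ball intersects $\sim R^{(\alpha-1)/2}$ slats of a single track, giving total mass $\sim R^{-1/2}/N_T$; this is $\lesssim R^{-\alpha/2}$ precisely because $N_T = R^{(\alpha-1)/2}$. Intermediate and larger scales are then handled by the uniform estimates $\mu(B(x,r))\lesssim r/N_T$ (one-track regime) and $\mu(B(x,r))\lesssim r^2$ (many-tracks regime), both of which are $\lesssim r^\alpha$ throughout the relevant ranges for $1<\alpha<2$. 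Smaller scales $r<R^{-1}$ reduce to the density $R^{(3-\alpha)/2}/N_T$ on a single slat, and the same choice of $N_T$ makes the bound tight.

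To establish the $L^2$ blow-ups, I would isolate the contributions of pairs $(x,y)$ lying in the same train track with $y$ in a slat $M$ positions from the slat of $x$. A geometric expansion of $|x-y|$ shows that such distances concentrate in an interval of width $\approx R^{\alpha/2-1}/M$ (dominated by the quadratic-in-perpendicular-displacement error) around $MR^{-\alpha/2}$, carrying total mass per slat pair $(R^{-\alpha/2}/N_T)^2$ (Mattila case) or $R^{-\alpha/2}/N_T$ (pinned case). Summing densities squared times widths over the $R^{\alpha/2}$ slat pairs in a given track, over $N_T$ tracks (Mattila case), and over $M=1,\dots,R^{\alpha/2}$, both computations produce the same lower bound $\|d_*(\mu\times\mu)\|_{L^2}^2,\ \|d^x_*\mu\|_{L^2}^2\gtrsim R^{2-3\alpha/2}$. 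For $\alpha<4/3$ this is an unbounded power of $R$, so choosing $R$ large enough (depending on $B$) yields the conclusion for both (2) and (3) simultaneously, with (3) holding uniformly for every $x$ in the support since the slat-pair count for a fixed $x$ does not depend on the location of $x$ within its slat or track.

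The main technical nuisance is bookkeeping the width of each peak in the pinned case: the naive interval $I_M$ of width $R^{-1}$ is too narrow except when $M\sim R^{\alpha/2}$, and the true concentration interval grows like $R^{\alpha/2-1}/M$. Once this is tracked correctly, the density-times-width-squared contributions telescope into the same critical exponent $2-3\alpha/2$ that appears in the Mattila calculation; this is what pins the threshold at $\alpha=4/3$ and is essentially the phenomenon that motivates the rest of the paper. No deep tools are needed beyond the explicit geometry of the construction.
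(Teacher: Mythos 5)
Your approach is the same as the paper's own proof: the train-track construction, the Frostman-measure check by a case analysis on the scale $r$, and the $L^2$ lower bound via Cauchy--Schwarz over concentration intervals around $MR^{-\alpha/2}$ all match. Your added care about the width of each concentration interval (growing like $R^{\alpha/2-1}/M$ rather than staying at $R^{-1}$ for all $M$) is a legitimate refinement of the paper's cruder claim that $|x-y|\in I_M$, which as written is only valid for $M\gtrsim R^{\alpha/2}$; since the sum over $M$ is dominated by those large $M$ in either case, both accountings reach the same blow-up rate $R^{2-3\alpha/2}$.
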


\begin{proof}  Let $R$ be a large parameter.  Let $A_R$ be the set of points $(x_1,x_2) \in [0,1]^2$ where $0 \le x_1 \le R^{-1/2}$ and where, for some integer $M$, 
	
$$ M R^{-\alpha/2} \le x_2 \le  M R^{-\alpha/2}+ R^{-1} . $$

\noindent This set reminds me of a train track.  The slats of the train track are rectangles with dimensions $R^{-1/2} \times R^{-1}$, and there are $\sim R^{\alpha/2}$ slats evenly spaced inside a vertical rectangle of dimension $R^{-1/2} \times 1$.  We form a set $E_R$ by taking the union of $R^{\frac{\alpha -1}{2}}$ train tracks that are evenly spread.  To be definite, let us define $A_{R,l}$ to be the translate of $A_R$ by the vector $ (R^{- \frac{\alpha-1}{2}} l, 0)$. and then define $E_R$ to be the union of $A_{R,l}$ as $l$ goes from $0$ to $R^{\frac{\alpha-1}{2}}$.    (There is considerable freedom in how to take the union of the train tracks, and we could make similar examples with non-parallel train tracks also.)  Let $\mu_R$ be the normalized area measure on $E_R$.

First we check that $\mu_R (B(x,r)) \lesssim r^\alpha$.  The number of $R^{-1}$-boxes in $E_R$ is $R^{\frac{\alpha-1}{2}} R^{\alpha/2} R^{1/2} = R^\alpha$.  So we have to check that if $r = A R^{-1}$, then the number of $R^{-1}$ boxes in $E_R \cap B(x,r)$ is $\lesssim A^\alpha$.  If $A \le R^{1/2}$, then $B(x,r) \cap E_R$ is contained inside one train track.  The spacing between horizontal slats is $R^{-\alpha/2}$, and so the number of horizontal slats that intersect the ball $B(x,r)$ is at most

$$ \frac{r}{R^{-\alpha/2}} = A R^{-1} R^{\alpha/2}. $$
	
\noindent Each horizontal slat intersects $B(x,r)$ in at most $A$ $R^{-1}$-boxes.  So the total number of $R^{-1}$-boxes in $B(x,r)$ is at most

$$ A^2 R^{\frac{-2 + \alpha}{2}} = A^\alpha A^{2 - \alpha} R^{- \frac{2 - \alpha}{2}} \le A^\alpha, $$

\noindent where in the last inequality we used $A \le R^{1/2}$.  

Suppose $A \ge R^{1/2}$.  Morally, since the train tracks are spaced evenly, the estimates will be even better than for the case $A = R^{1/2}$.  Here are the details.  Since the spacing between train tracks is $R^{-\frac{\alpha-1}{2}}$, the number of train tracks that $B(x,r)$ intersects is at most

$$ r R^{\frac{\alpha-1}{2}} + 1= A R^{\frac{\alpha -3}{2}} + 1. $$

\noindent Within each train track, the number of slats that $B(x,r)$ intersects is at most

$$ r R^{\alpha/2} = A R^{\frac{\alpha-2}{2}}. $$

\noindent Each slat contains $R^{1/2}$ $R^{-1}$-boxes.  So the total number of $R^{-1}$-boxes in $B(x,r)$ is at most

$$ A^2 R^{\alpha - 2} + A R^{\frac{\alpha - 1}{2}}= A^\alpha A^{2 - \alpha} R^{\alpha - 2} +  A^\alpha A^{1- \alpha} R^{\frac{\alpha-1}{2}}\le A^\alpha, $$

\noindent where in the last inequality we used $R^{1/2} \le A \le R$.  

Next we estimate $\int |d_* (\mu \times \mu) |^2$.  The key point is that $d_*(\mu \times \mu)$ assigns a large measure to each interval $I_M = [M R^{- \alpha/2} - 2 R^{-1}, M R^{-\alpha/2} + 2 R^{-1}]$, where $M$ is an integer with $M \sim R^{\alpha/2}$.  Indeed, if $x$ is any point in $E_R$, and if $y$ lies in the same train track as $x$, in a horizontal slat which is $M$ steps from the horizontal slat containing $x$, then $|x-y| \in [M R^{- \alpha/2} - 2 R^{-1}, M R^{-\alpha/2} + 2 R^{-1}]$.  The $\mu_R$ measure of a single slat is $R^{-\alpha + 1/2}$, because the slat contains $R^{1/2}$ $R^{-1}$-boxes, which each have $\mu_R$ measure $R^{-\alpha}$.  Therefore,

$$ d_*(\mu \times \mu) (I_M) \gtrsim R^{-\alpha + \frac{1}{2}}. $$

\noindent By Cauchy-Schwarz,

$$ \int_{I_M} d_*(\mu \times \mu)^2 dt \gtrsim \frac{R^{-2 \alpha +1}}{|I_M|} \sim R^{- 2 \alpha + 2}. $$

\noindent The number of different $I_M$ is $\sim R^{\alpha/2}$, and so

$$ \int d_*(\mu \times \mu)^2 dt \gtrsim R^{- \frac{3}{2} \alpha + 2}. $$

\noindent If $\alpha < 4/3$, then the power of $R$ is positive, and $\int d_*(\mu \times \mu)^2 dt$ goes to infinity with $R$.

Finally we estimate $\int |d^x_*(\mu)|^2$.  The computation is similar: $d^x_*(\mu)$ assigns a large measure to each interval $I_M$ defined above.  In fact, just as above, 

$$d^x_*(\mu) (I_M) \gtrsim R^{-\alpha + \frac{1}{2}}. $$

\noindent because if $y$ lies in the slat of $E_R$ lying in the same train track as $x$ and $M$ horizontal slats from the horizontal slat containing $x$, then $d^x(y) = |x-y| \in I_M$, and the $\mu_R$ measure of this slat is $R^{-\alpha + 1/2}$.  Then just as above we get

$$ \int_{I_M} d^x_*(\mu)^2 dt \gtrsim \frac{R^{-2 \alpha +1}}{|I_M|} \sim R^{- 2 \alpha + 2} \textrm{ and }$$

$$ \int d^x_*(\mu)^2 dt \gtrsim R^{- \frac{3}{2} \alpha + 2}. $$

\noindent If $\alpha < 4/3$, then the right-hand side tends to infinity as desired.  \end{proof}

We can also take limits of these examples with different scalings.  Suppose that $R_j$ is a sequence of scales that goes to infinity rapidly.  Define

$$ E_j = \bigcap_{i = 1}^j E_{R_i}, \textrm{ and }$$

$$ E = E_\infty = \bigcap_{i=1}^\infty E_{R_i}. $$

Define $\mu_i$ to be $\mu_{R_i}$ restricted to $E_i$ and renormalized, and let $\mu = \mu_\infty$ be a weak limit of the measures $\mu_i$.  It is not hard to check that the Hausdorff dimension of $E$ is $\alpha$, that $\mu(B(x,r)) \lesssim r^\alpha$, and that $\| d_*(\mu \times \mu) \|_{L^2} = + \infty$ and $\| d^x_*(\mu) \|_{L^2} = + \infty$ for each $x \in E$.

\section{Generalization to other norms: proof of Theorem \ref{main2}} \label{sec:genmetric}

In this section, we consider the generalization of the Falconer problem where the Euclidean norm is replaced by other norms.  We will show that our main theorem generalizes to other norms as long as the unit ball of the norm is strictly convex and smooth.  

\begin{theorem} \label{mainK} Let $K$ be a symmetric convex body in $\mathbb{R}^2$ whose boundary $\partial K$ is $C^\infty$ smooth and has everywhere positive curvature bounded from above and below.  Let $\|\cdot \|_K$ denote the norm with unit ball $K$.  Define the pinned distance set
	
	$$ \Delta_{x,K}(E) := \{ \| x-y \|_K \}_{y \in E}. $$
	
If $E \subset \RR^2$ has Hausdorff dimension $> 5/4$, then there exists $x \in E$ so that the pinned distance set $\Delta_{x,K}(E)$ has positive Lebesgue measure.
\end{theorem}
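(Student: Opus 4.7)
The proof mirrors that of Theorem \ref{main}, adapting each step to the $K$-norm. First, I would choose subsets $E_1, E_2 \subset E$ of positive $\alpha$-dimensional Hausdorff measure separated by distance $\sim 1$, equip them with Frostman measures $\mu_1, \mu_2$, and build the decomposition $\mu_1 = \mug + (\mu_1 - \mug)$ via a wave-packet construction. The scales $R_j = 2^j R_0$ and frequency blocks $\tau$ of dimensions $R_j^{1/2} \times R_j$ are as in the Euclidean case. Physical tubes $T$ of dimensions $R_j^{-1/2+\delta} \times 1$ are reused but now oriented so that the long axis of $T$ is the normal to $\partial K$ at the point where the radial direction of $\tau$ meets $\partial K$; since $\partial K$ is $C^\infty$ with curvature bounded above and below, this is a smooth bounded perturbation of the Euclidean choice and the combinatorial structure of good and bad tubes is preserved.

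The $L^1$ estimate (Proposition \ref{mainest1}) transfers with purely cosmetic modifications. The core of the argument, Lemma \ref{xnotinT}, writes an arc of $S^1(x,t)$ as a graph $y_2 = h(y_1)$ and uses $|h^{(k)}| \lesssim 1$ together with $|h'(y_1)| \gtrsim R_j^{-1/2+\delta}$ on the support of $\eta_T$ to run stationary phase. Arcs of $x + t\partial K$ admit the same graph parametrizations by the curvature bounds on $\partial K$, and the required lower bound on $|h'|$ rests on the tube $T$ being aligned with the normal to $\partial K$ at the relevant point, which our choice of orientation guarantees by construction. Orponen's radial projection theorem (Theorem \ref{orponenthm}) is norm-independent, so its use is unchanged.

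For the $L^2$ estimate (Proposition \ref{mainest2}), I would first establish a $K$-analog of Liu's identity. Writing $\sigma_t^K$ for the normalized surface measure on $\partial(tK)$, one anticipates
\[
\int_0^\infty |f * \sigma_t^K(x)|^2 \, t\, dt \;\sim\; \int_0^\infty |f * \widehat{\sigma_r^K}(x)|^2 \, r\, dr,
\]
provable by a polar decomposition in Fourier space adapted to the dual body $K^*$ together with stationary-phase asymptotics for $\widehat{\sigma_1^K}$. Next, the same reasoning as in the Euclidean case shows that $\mug * \widehat{\sigma_r^K} \approx \sum_{T \text{ good}} M_T\mu_1 * \widehat{\sigma_r^K}$ is a wave-packet decomposition, each piece morally supported on the tube $T$ and with Fourier support in an $O(1)$-neighborhood of an arc of $\partial(rK)$ inside $\tau$. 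Corollary \ref{correfdec} applies because $\partial K$ is a strictly convex $C^2$ curve with positive curvature. The remaining ingredients --- sorting tubes by $L^6$ norm, pigeonholing cubes by multiplicity as in Lemma \ref{mu2Y}, invoking refined decoupling at $p = 6$, and integrating the result against $|\widehat{\mu_1}|^2$ --- proceed verbatim, recovering the same threshold $\alpha > 5/4$.

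The main obstacle is the generalized Liu identity, since its Euclidean proof uses rotational symmetry unavailable for a general $K$. I would try to derive it directly through a $K^*$-adapted polar change of variable in Fourier space. Failing that, one can sidestep the identity by expanding $\int_{E_2} \int_0^\infty |\mug * \sigma_t^K(x)|^2 \, t\, dt\, d\mu_2(x)$ directly on the physical side, decomposing $\sigma_t^K$ into wave packets (whose Fourier transform concentrates near $t \partial K^*$ with $|\xi|^{-1/2}$ decay), and applying Corollary \ref{correfdec} to the resulting double sum. Once this single technical ingredient is handled, the rest of the proof is a faithful transcription of the Euclidean argument.
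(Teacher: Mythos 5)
Your overall strategy mirrors the paper's and is sound, but you have the $K$ versus $K^*$ duality systematically confused, and since that duality is really the only new idea in the generalization, this is a genuine gap. The Herz asymptotic expansion of $\widehat{\sigma^K}(\xi)$ oscillates with phase $\|\xi\|_{K^*}$ (not $\|\xi\|_K$), so after Plancherel in $t$ the variable dual to $t$ is $\|\xi\|_{K^*}$, and the $r$-level set is $\partial(rK^*)$, not $\partial(rK)$. The correct analogue of Liu's identity is
$$\int_0^\infty |f*\sigma^K_t(x)|^2\,t\,dt \;\lesssim\; \int_0^\infty |f*\widehat{\sigma^{K^*}_r}(x)|^2\,r\,dr + (\text{lower-order remainders}),$$
where $\sigma^{K^*}_r$ is a smooth (not arclength) measure on $\partial(rK^*)$ carrying the curvature and Jacobian factors from the stationary-phase expansion of $\widehat{\sigma^K}$; your displayed version with $\widehat{\sigma_r^K}$ on the right is only correct when $K$ is self-dual. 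Consequently the decoupling surface in Corollary \ref{correfdec} must be $\partial K^*$, and the wave packets of $\mug * \widehat{\sigma^{K^*}_r}$ are Fourier-supported near arcs of $\partial(rK^*)$, not of $\partial(rK)$. (Your alternative-route sketch at the very end, where you write that the Fourier transform of $\sigma^K_t$ concentrates near $t\partial K^*$, has this right --- so the earlier display reads as a slip rather than a misconception --- but it must be corrected before the microlocalization can be checked.)

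The same duality also corrects your tube orientation. You say the long axis of $T\in\TT_{j,\tau}$ should be the normal to $\partial K$ at the point where the radial direction of $\tau$ meets $\partial K$. That prescription coincides with the correct one only for Euclidean balls; already for an ellipse with semi-axes $a\neq b$ and $\omega=(1,1)$ it gives a direction proportional to $(b^2,a^2)$ when the right answer is proportional to $(a^2,b^2)$. What is needed is that $T$ point in the direction $v(\omega)$, the point of $\partial K$ where $\omega$ is the outward normal --- equivalently, $T$ is perpendicular to $\partial K^*$ at $\omega$. This is forced twice over: (i) in the stationary-phase argument of Lemma \ref{xnotinTK}, the tube $T_0$ through $x$ meets $S^1_K(x,t)$ precisely where the outward normal is parallel to $\omega$, so that on nearby tubes one gets $|h'|\gtrsim R_j^{-1/2+\delta}$; and (ii) in Proposition \ref{mainest2K}, $M_T\mu_1*\widehat{\sigma^{K^*}_r}$ is physically concentrated in a thin tube in the direction normal to $\partial K^*$ at $\omega$, so $T$ must have that direction for the wave-packet localization to hold. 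Once these two points are fixed, the rest of your proposal does go through essentially as written; the paper additionally replaces $d^x_{K,*}$ by the comparable $T_{K,x}f(t)=t^{1/2}\sigma^K_t*f(x)$ so that the $|t\xi|^{-1/2}$ Herz factor cancels cleanly, but that is a convenience rather than an obstacle.
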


Many parts of the proof work in the same way, and we will only discuss the required changes.  The most interesting new ingredient is a generalization of Liu's identity - Theorem \ref{Liu}.

Let us start by discussing the analogue of the train track examples for a general norm.  This will help us motivate the right way to decompose $\mu_1$ into pieces $M_T \mu_1$.  A train track consists of many parallel slats of dimensions $\sim R^{-1/2} \times R^{-1}$ contained in a larger rectangle of dimensions $\sim R^{-1/2} \times 1$.  In the original Euclidean case, the direction of the slats is perpendicular to the direction of the larger rectangle.  But to build an interesting example for the norm $\| \cdot \|_K$, the angle of the slats should be dictated by the geometry of $K$ in the following way.  Suppose that the long axis of the large rectange is parallel to a vector $v$.  By rescaling, we can assume that $v \in \partial K$.  Then build a train track where the slats are rectangles with long axis parallel to the tangent vector of $K$ at $v$.  In this case, if $x$ lies in one slat in the train track, and $y_1, y_2$ lie in the same slat at the opposite side of the train track, then $ \| x - y_1 \|_K = \| x - y_2 \|_K + O(R^{-1})$.  From here on, train track examples have the same properties as in the Euclidean case.

The angles of the slats have a nice interpretation in terms of the dual norm $K^*$.  We recall here some standard facts about dual norms.  Let $\| \cdot \|_{K^*}$ be the dual norm to $\| \cdot \|_K$, and let $K^*$ be the unit ball of the dual norm, which will also be smooth and strictly convex.  Recall that the dual norm is defined by

$$ \| \omega \|_{K^*} = \sup_{v \in K} \omega \cdot v. $$

\noindent By strict convexity, there is a unique $v \in K$ which achieves the supremum, which we denote by $v(\omega)$.  The plane $\omega \cdot v = \omega \cdot v(\omega)$ is tangent to $K$ at $v(\omega)$, and so we see that $\omega$ is normal to $\partial K$ at $v(\omega)$.  Similarly, for each vector $v$, there is a unique $\omega(v) \in \partial K^*$ so that $\omega(v) \cdot v = \| v \|_K$.  The plane $\{ \omega:\omega \cdot v = \omega(v) \cdot v \}$ is tangent to $K^*$ at $\omega(v)$ and so $v$ is normal to $\partial K^*$ at $\omega(v)$.  If $\omega \in \partial K^*$ and $v \in \partial K$, then $v \cdot \omega = 1$ if and only if $v = v(\omega)$ if and only if $\omega = \omega(v)$.  Therefore $\omega(v(\omega)) = \omega$ and $v(\omega(v)) = v$.  Since $\omega(v)$ is normal to $\partial K$ at $v$, the map $\omega: \partial K \rightarrow \partial K^*$ is essentially the Gauss map.  Because the curvature of $\partial K$ is $\sim 1$, the map $\omega$ is bilipschitz: for $v_1, v_2 \in \partial K, | \omega(v_1) - \omega(v_2)| \sim |v_1 - v_2|$.  Therefore the map $v: \partial K^* = \partial K$ is bilipschitz.  This shows that the curvature of $K^*$ is $\sim 1$.  

We can now generalize the decomposition $\mu_1 = \sum_T M_T \mu_1$ to the case of general norms $\| \cdot \|_K$, where $M_T \mu_1$ is designed to isolate the train track configurations described above.  We let $R_j$ and $\tau$ and $\psi_{j, \tau}$ be the same as in the Euclidean case.  But we redefine the tubes $\TT_{j, \tau}$.  For each $\tau$, consider $\partial (RK^*)$ where $R$ is chosen so that $\partial (RK^*) \cap \tau$ is non-empty (so $R \sim R_j$).  Then we let $\TT_{j, \tau}$ be a set of tubes $T$ with long direction perpendicular to $\partial(RK^*) \cap \tau$.  In other words, if $\omega \in \tau$, then the direction of the tubes $T$ is $v(\omega)$.  As before, the dimensions of the tubes are $R^{-1/2 + \delta} \times 1$ and the set of $T \in \TT_{j,\tau}$ covers $B^2(2)$.  We choose $\eta_T$ so that $\sum_{T \in \TT_{j, \tau}} \eta_T$ is 1 on $B^2(2)$.  Then we define, for each $T \in \TT_{j, \tau}$, 

$$ M_T f := \eta_T (\psi_{j, \tau} \hat f)^\vee. $$

We define good and bad tubes in the same way as in the Euclidean case, and as before we let

$$ \mug = M_0 \mu_1 + \sum_{j \ge 1} \sum_{\tau} \sum_{T \in \TT_{j,\tau}, T \textrm{ good}} M_T \mu_1. $$

In the Euclidean case, we studied the pushforwards $d^x_* \mu_1$ and $d^x_* \mug$ for $x \in E_2$.  In the case of general norms, we will use a small variation of the pushforward measure.  We need the small variation because of the way that the generalization of Liu's identity is stated, cf. Lemma \ref{LiuK} below.

Recall that $\sigma_t$ denotes the normalized arc length measure on the (Euclidean) circle of radius $t$.  Then as we saw above

$$ d^x_* (f darea) (t) = t \sigma_t * f(x). $$

Define $\sigma^K_t$ to be the normalized (Euclidean) arc length measure on $S^1_K(t)$ - the circle of radius $t$ in the norm $\| \cdot \|_K$.  Then we define

$$ T_{K, x} (f darea) (t) = t^{1/2} \sigma_t^K * f(x). $$

\noindent We note that the support of $T_{K,x} (f darea)$ is contained in $\Delta_{x,K}( \supp(f))$.  In particular, if $x \in E_2$, then the support of $T_{K,x} \mu_1$ is contained in $\Delta_{x,K}(E_1) \subset \Delta_{x, K}(E)$.   For comparison, if we let $d_K^x(y) = \| x - y \|_K$, we would have

$$ d^x_{K, *} (f darea) (t) = t (k \sigma_t^K) * f(x), $$

\noindent where $k(y)$ is a smooth positive function which only depends on the direction of $y$ -- i.e. $k(\lambda y) = k(y)$ for $\lambda \not= 0$.  If $x \in E_2$ and $y \in E_1$, then $\| x - y \|_K \sim 1$.  Therefore, both $d^x_{K,*} \mu_1$ and $T_{K,x} \mu_1$ are supported in $t \sim 1$, and by comparing the two formulas, we see that $T_{K,x} \mu_1 (t) \sim d^x_{K, *} \mu_1 (t)$.  In particular, this implies that $\int T_{K,x} \mu_1(t) dt \sim 1$.  

To prove Theorem \ref{mainK}, we have to prove analogues of Proposition \ref{mainest1} and Proposition \ref{mainest2}:

\begin{proposition} \label{mainest1K}  Let $K$ be a symmetric convex body in $\mathbb{R}^2$ whose boundary $\partial K$ is $C^\infty$ smooth and has everywhere positive curvature bounded from above and below.  If $\alpha > 1$, then there is a subset $E_2' \subset E_2$ so that $\mu_2(E_2') \ge 1 - \frac{1}{1000}$ and for each $x \in E_2'$,
	
	$$	\| T_{K,x} \mu_1 - T_{K,x} \mug \|_{L^1} < \frac{1}{1000}. $$
\end{proposition}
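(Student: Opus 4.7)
The plan is to mirror the proof of Proposition \ref{mainest1} step by step, with the only substantive change being the stationary phase lemma (the analogue of Lemma \ref{xnotinT}). All the downstream ingredients in Section \ref{sec:mainest1}---Lemma \ref{MTfL1}, Lemma \ref{f=MTf}, and Lemma \ref{badrect}---do not see the Euclidean distance and will carry through verbatim. In particular, the definition of a bad tube depends only on $\mu_2(T)$, and Orponen's argument projects $2T$ to a short arc of $S^1$ via the Euclidean radial projection $P_y$, which is insensitive to the tube orientation dictated by $K$.

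First I would prove: for $T \in \TT_{j,\tau}$, $x \in E_2$ and $x \notin 2T$,
$$ \| T_{K,x}(M_T \mu_1) \|_{L^1} \lesssim \RD(R_j). $$
As in the Euclidean case, $t$ may be restricted to $t \sim 1$, and a pointwise bound reduces the claim to showing, for each frequency $\omega \in \tau$,
$$ \left| \int_{\{y: \|y-x\|_K = t\}} \eta_T(y)\, e^{2\pi i \omega \cdot y}\, dl(y) \right| \lesssim \RD(R_j). $$
This is where the dual-norm geometry of Section \ref{sec:genmetric} enters. The critical points of the phase $y \mapsto \omega \cdot y$ on the level set $\{\|y-x\|_K = t\}$ are exactly the two points $y = x \pm t\, v(\omega)$, so the critical locus is the line through $x$ in direction $v(\omega)$, which is parallel to the long axis of $T$ by the very definition of $\TT_{j,\tau}$. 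The assumption $x \notin 2T$ forces this line to stay at Euclidean distance $\gtrsim R_j^{-1/2+\delta}$ from the axis of $T$, and by the uniform lower bound on the curvature of $\partial K$ this separation translates into $|\partial_s (\omega \cdot y(s))| \gtrsim R_j^{1/2+\delta}$ along $T \cap \{\|y-x\|_K = t\}$. The derivatives of the amplitude $\eta_T$ and the higher derivatives of the phase are bounded as in the Euclidean case. Repeated integration by parts, with the same two-case split according to whether $T$ meets the set $\{\|y-x\|_K = t/2\}$, then yields the claimed rapid decay.

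The rest of the proof assembles as in Section \ref{sec:mainest1}. Combining the stationary phase estimate above with Lemma \ref{MTfL1} and the elementary $L^1 \to L^1$ boundedness of $T_{K,x}$ on functions at Euclidean distance $\sim 1$ from $x$ (which follows from the coarea formula together with $\|x-z\|_K \sim 1$ and $|\nabla \|\cdot\|_K| \sim 1$ on the relevant region), I obtain the analogue of Corollary \ref{MTmuL1}, and hence
$$ \| T_{K,x}\mu_1 - T_{K,x}\mug \|_{L^1} \lesssim \sum_{j \ge 1} \mu_1(\Bad_j(x)) + \RD(R_0). $$
Lemma \ref{badrect} then bounds each $\mu_1(\Bad_j(\cdot))$ after integration against $\mu_2$, and the same dyadic pigeonholing as in the Euclidean case produces the desired subset $E_2' \subset E_2$. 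The main obstacle in this plan is the stationary phase step: the decomposition $\mu_1 = \sum_T M_T \mu_1$ was engineered precisely so that the physical tube direction $v(\omega)$ matches the critical direction of $e^{2\pi i \omega \cdot y}$ on the $K$-level sets, and verifying that this duality yields exactly the same phase-derivative lower bound as in the round-circle case is where the smoothness and strict convexity of $\partial K$ are actually used.
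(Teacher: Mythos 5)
Your proposal is correct and follows essentially the same path as the paper's proof: the only genuinely new ingredient is the stationary phase bound for $T_{K,x}(M_T\mu_1)$, where you correctly use that the tubes in $\TT_{j,\tau}$ are aligned with $v(\omega)$ so that $x\notin 2T$ forces the critical locus of $\omega\cdot y$ on the $K$-sphere away from $T$, and the curvature lower bound on $\partial K$ converts that separation into the phase-derivative bound $\gtrsim R_j^{1/2+\delta}$, exactly as the paper argues via the Gauss map and strict convexity. The remaining ingredients (Lemmas analogous to \ref{MTfL1}, \ref{f=MTf}, \ref{badrect} and the dyadic pigeonholing) are indeed metric-independent, and your coarea-formula justification of the elementary $L^1$ bound for $T_{K,x}$ matches the paper's unstated "straightforward" check.
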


\begin{proposition} \label{mainest2K}  Let $K$ be a symmetric convex body in $\mathbb{R}^2$ whose boundary $\partial K$ is $C^\infty$ smooth and has everywhere positive curvature bounded from above and below.  If $\alpha > 5/4$, then 
	
	$$	\int_{E_2}  \| T_{K,x} \mug \|_{L^2}^2 d \mu_2(x) < + \infty. $$
\end{proposition}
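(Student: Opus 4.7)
The plan is to run the argument of Proposition~\ref{mainest2} essentially line by line, with two adjustments: replace Liu's identity (Theorem~\ref{Liu}) by its analogue Lemma~\ref{LiuK} (which is the reason $T_{K,x}$ carries the weight $t^{1/2}$ rather than $t$), and replace the circle $rS^1$ by $\partial(rK)$ throughout. Concretely, expanding $T_{K,x}\mug(t)=t^{1/2}\sigma_t^K*\mug(x)$ and applying Lemma~\ref{LiuK}, the problem reduces to proving the pointwise-in-$r$ estimate
\[
\int_{E_2}|\mug*\widehat{\sigma_r^K}(x)|^2\,d\mu_2(x)\le C(R_0)\,r^{-\frac{\alpha+1}{3}+\epsilon}\,r^{-1}\int|\hat\mu_1|^2\psi_r\,d\xi,
\]
where $\psi_r$ is a bump adapted to the unit neighborhood of $\partial(rK)$. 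Integrating in $r$ and recognizing the Fourier representation of $I_\beta(\mu_1)$ with $\beta=2-(\alpha+1)/3+\epsilon$ then finishes the proof whenever $\alpha>5/4$, just as in the Euclidean case.

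To obtain the pointwise-in-$r$ estimate I would decompose $\mug*\widehat{\sigma_r^K}$ via the good $M_T\mu_1$ with $R_j\sim r$, set $f_T=\eta_1(M_T\mu_1*\widehat{\sigma_r^K})$, and verify the microlocalization demanded by Corollary~\ref{correfdec}: the Fourier support of $f_T$ lies essentially in a rectangular cap $\theta(T)$ of dimensions $r^{1/2}\times 1$ in the $1$-neighborhood of $\partial(rK)$, and the physical support of $f_T$ lies essentially in $2T$. The physical localization is the main place where $K$ enters: by stationary phase on $\partial(rK)$, using the uniform positive curvature of $\partial K$, $(\tilde\psi_\tau\sigma_r^K)^\vee$ decays rapidly on $B^2(1)$ outside a tube of width $r^{-1/2+\delta}$ in the direction normal to $\partial K$ at the stationary point, namely $v(\omega)$ for $\omega\in\tau\cap\partial(rK^*)$; this is precisely the long direction of the tubes $T\in\TT_{j,\tau}$ in the generalized definition. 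With this in hand the refined Strichartz estimate applies to $\partial(rK)$ exactly as to $rS^1$, since $\partial K$ is $C^\infty$ with curvature $\sim 1$.

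From there I would run the dyadic pigeonholing of the Euclidean proof without change: sort the $f_T$ by $L^6$-norm into $\WW_\lambda$, sort the $r^{-1/2}$-cubes into $\QQQ_{\gamma,M}$ by $\mu_2$-mass and multiplicity, use the goodness bound $\mu_2(2T)\lesssim r^{-1/2+100\delta}$ to control $\mu_2(Y_{\gamma,M})$ by the double-counting of Lemma~\ref{mu2Y}, apply Corollary~\ref{correfdec} with $p=6$, and interpolate via H\"older between $\|f_\lambda\|_{L^6(Y_{\gamma,M})}^2$ and $\|\mu_2*\eta_{1/r}\|_{L^{3/2}}$. Combined with the trivial bound $\|f_T\|_{L^6}\lesssim r^{-1/3}\|\widehat{M_T\mu_1}\|_{L^2(d\sigma_r^K)}$ and the almost-orthogonality
\[
\sum_{R_j\sim r}\sum_\tau\sum_{T\in\TT_{j,\tau}}\int|\widehat{M_T\mu_1}|^2\,d\sigma_r^K\lesssim r^{-1}\int|\hat\mu_1|^2\psi_r\,d\xi,
\]
which depends only on the fact that the caps $\tau\cap\partial(rK)$ tile $\partial(rK)$ with bounded overlap and that $\widehat{\eta_T}$ is essentially supported in an $r^{1/2}\times 1$ rectangle aligned with the tangent to $\partial K$, this closes the estimate.

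The main obstacle is establishing the correct form of Lemma~\ref{LiuK}: the Euclidean identity leans on the rotational symmetry of $\sigma_t$ and explicit Bessel-function calculations, whereas for a curved norm one must argue intrinsically, using only the scaling $\sigma_t^K(y)=t^{-1}\sigma_1^K(y/t)$ together with the curvature of $\partial K$ (which ensures the relevant stationary phase gives the expected decay rate). The weight $t^{1/2}$ in the definition of $T_{K,x}$ is built in precisely to compensate for this so that the Fourier side identity takes a clean form. Once Lemma~\ref{LiuK} is available, every other ingredient — stationary phase, decoupling, double counting, orthogonality — depends on $K$ only through uniform upper and lower bounds on the curvature and smoothness of $\partial K$, so the numerology matches the Euclidean case and the threshold $\alpha>5/4$ is unchanged.
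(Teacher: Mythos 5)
Your overall strategy is the paper's: apply the $K$-analogue of Liu's identity (Lemma \ref{LiuK}) to pass to the Fourier side, then run the same wave-packet decomposition, refined Strichartz estimate, and double-counting argument as in the Euclidean case. However, there is a genuine error in the setup, not merely a slip of notation: you consistently place the Fourier-side surface on $\partial(rK)$ and convolve with $\widehat{\sigma_r^K}$, whereas the correct surface is $\partial(rK^*)$ and the correct convolution is with $\widehat{\sigma_r^{K^*}}$, a measure on the dual body. The source of this is Herz's asymptotic formula
\[
\widehat{\sigma^K}(\xi) = |\xi|^{-1/2}\kappa(\xi)^{-1/2}e^{2\pi i(\|\xi\|_{K^*}-1/8)} + |\xi|^{-1/2}\kappa(-\xi)^{-1/2}e^{-2\pi i(\|\xi\|_{K^*}-1/8)} + O(|\xi|^{-3/2}),
\]
which your sketch of Lemma \ref{LiuK} never invokes: the oscillation of $\widehat{\sigma^K}$ is governed by the \emph{dual} norm $\|\cdot\|_{K^*}$, so after taking the Fourier transform of $F(t)=t^{1/2}\sigma_t^K * f(x)$ in $t$ the relevant level sets become $\{\|\xi\|_{K^*}=r\}=\partial(rK^*)$, not $\partial(rK)$.

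This matters because the physical localization step would fail with your choice. The tubes $T\in\TT_{j,\tau}$ in the generalized decomposition were built to point in direction $v(\omega)$ for $\omega\in\tau$, and by the paper's dual-norm dictionary $v(\omega)$ is the outward normal to $\partial K^*$ at $\omega$, while the normal to $\partial K$ at $v\in\partial K$ is $\omega(v)$. Your stationary-phase claim --- that $(\tilde\psi_\tau\sigma_r^K)^\vee$ decays outside a tube in direction $v(\omega)$ --- is therefore self-contradictory: for $\omega\in\tau\cap\partial(rK)$, writing $\omega=rv$ with $v\in\partial K$, the critical direction is the normal to $\partial(rK)$ at $\omega$, namely $\omega(v)$, which is not $v(\omega)$ except when $K$ is the Euclidean ball. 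With $\sigma_r^{K^*}$ in place, the critical direction is $v(\omega)$, which lines up exactly with the direction of $T$, and the remainder of your argument then goes through as written. The substitution $K\to K^*$ on the Fourier side is thus the essential content of the generalization, and your proposal has it reversed. (You also omit the check that the $\dot H^{-1/2}$ and $\dot H^{-1/2+\epsilon}$ remainder terms in Lemma \ref{LiuK} are finite for $\mug$; this follows from $\|\mug\|_{\dot H^{-s}}\lesssim\|\mu_1\|_{\dot H^{-s}}\sim I_{2-2s}(\mu_1)^{1/2}<\infty$ for $s>1-\alpha/2$, which holds since $\alpha>1$, but it should be recorded.)
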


\subsection{Proposition \ref{mainest1} for general norms}

In this section, we discuss the proof of Proposition \ref{mainest1K} the analogue of Proposition \ref{mainest1}.  We explain what needs to be modified in the proof of Proposition \ref{mainest1}.  The most significant part is the proof of the first lemma,  Lemma \ref{xnotinT}.   In the context of general norms, the lemma still holds with the same statement, but when we look at the proof we will need to use the way that $\tau$ and the direction of $T \in \TT_{j, \tau}$ are related to each other.

\begin{lemma} \label{xnotinTK} If $T \in \TT_{j, \tau}$, and $x \in E_2$, and $x \notin 2T$, then
	
	$$ \| T_{K,x} (M_T \mu_1) \|_{L^1} \lesssim \RD(R_j). $$
\end{lemma}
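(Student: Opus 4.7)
My plan is to follow the proof of Lemma \ref{xnotinT} closely, with the modifications dictated by two features of the new setup: we now integrate over the $K$-circle $S^1_K(x,t)=\{y:\|x-y\|_K=t\}$ rather than the Euclidean circle, and the tubes $T\in\TT_{j,\tau}$ have been redefined to have long axis in the direction $v(\omega)$ (where $\omega$ is the center of $\tau$) rather than $\omega/|\omega|$. As in the Euclidean case, it suffices to prove the pointwise bound $|T_{K,x}(M_T\mu_1)(t)|\lesssim \RD(R_j)$: using the formula $T_{K,x}(f)(t)=C_K\,t^{-1/2}\int_{S^1_K(x,t)}f\,d\ell$ (with $d\ell$ the Euclidean arc length measure) together with the fact that the support in $t$ is confined to $t\sim 1$ because $x\in E_2$ lies at distance $\gtrsim 1$ from $\supp\mu_1\subset E_1$, the $L^1$ bound in $t$ then follows. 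Expanding $M_T\mu_1=\eta_T\,(\psi_{j,\tau}\hat\mu_1)^\vee$ and using $|\hat\mu_1|,|\psi_{j,\tau}|\le 1$ with $\supp\psi_{j,\tau}\subset\tau$, the problem reduces (up to polynomial-in-$R_j$ losses absorbed into $\RD(R_j)$) to showing that for every $\omega\in\tau$,
$$
I(\omega,t):=\int_{S^1_K(x,t)}\eta_T(y)\,e^{2\pi i\omega\cdot y}\,d\ell(y)=\RD(R_j).
$$

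The key move is to parametrize $S^1_K(x,t)$ via an arc-length parametrization $v:[0,L]\to\partial K$ of $\partial K$, writing $y(s)=x+t\,v(s)$. The phase becomes $\phi(s)=2\pi t\,\omega\cdot v(s)$, so $\phi'(s)=2\pi t\,\omega\cdot v'(s)$ vanishes precisely when $v'(s)\perp\omega$, i.e., when $v(s)=\pm v(\omega)$. Rotate coordinates so that $v(\omega)=(0,1)$; then the critical points $s_\pm$ correspond to $v(s_\pm)=\pm(0,1)$, and the tube $T$ becomes nearly vertical. The condition $y(s)\in T$ translates to $|v_1(s)-a|\lesssim R_j^{-1/2+\delta}$ with $a=(T_1^{\mathrm{center}}-x_1)/t$, and the hypothesis $x\notin 2T$ gives $|a|\gtrsim R_j^{-1/2+\delta}$. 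Consequently, on the support of $\eta_T(y(s))$ we have $|v_1(s)-v_1(s_\pm)|=|v_1(s)|\gtrsim R_j^{-1/2+\delta}$.

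The central step is to convert this into a lower bound on $|s-s_\pm|$. The unit tangent $v'(s_\pm)$ is perpendicular to $\omega$, so $|v_1'(s_\pm)|=|\omega_2|/|\omega|$. In our coordinates $v(\omega)\cdot\omega=\omega_2$, while $v(\omega)\cdot\omega=\|\omega\|_{K^*}$ by definition of the dual norm. Since $K$ is smooth and strictly convex with $0\in\mathrm{int}\,K$, the dual body $K^*$ is comparable to a Euclidean disk, hence $\|\omega\|_{K^*}\sim|\omega|$ uniformly in $\omega$; thus $|v_1'(s_\pm)|\sim 1$. Taylor expansion then yields $|s-s_\pm|\gtrsim R_j^{-1/2+\delta}$ near each critical point (while far from $s_\pm$ the bound is automatic). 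Combined with $\phi''(s_\pm)=-2\pi t\,\kappa(s_\pm)|\omega|$, where $\kappa(s_\pm)\sim 1$ is the curvature of $\partial K$, this gives $|\phi'(s)|\gtrsim R_j\cdot R_j^{-1/2+\delta}=R_j^{1/2+\delta}$ on the support.

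From this point the stationary phase argument is identical to the one in Lemma \ref{xnotinT}: combined with the bounds $|\phi^{(k)}|\lesssim R_j$ and $|(d/ds)^k\eta_T(y(s))|\lesssim R_j^{k(1/2-\delta)}$, repeated integration by parts with $D=(i\phi')^{-1}d/ds$ produces decay $R_j^{-2\delta N}$ for arbitrary $N$, i.e., $\RD(R_j)$. The hard part is really just the purely geometric observation that the critical points of $\omega\cdot y$ on $S^1_K(x,t)$ lie on the line through $x$ in the direction $v(\omega)$ rather than $\omega$; once one matches this against the fact that the tubes $T\in\TT_{j,\tau}$ were \emph{defined} in Section \ref{sec:genmetric} to point in the direction $v(\omega)$, and the non-degeneracy $|v_1'(s_\pm)|\sim 1$ is extracted from the smoothness and strict convexity of $K$, the Euclidean proof carries over unchanged.
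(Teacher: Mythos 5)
Your proof is correct and takes essentially the same stationary-phase approach as the paper, resting on the identical key geometric observation: the critical points of the phase $\omega\cdot y$ on $S^1_K(x,t)$ lie on the line through $x$ in the direction $v(\omega)$, which by the construction of $\TT_{j,\tau}$ is the tube direction, so $x\notin 2T$ forces $|\phi'|\gtrsim R_j^{1/2+\delta}$ on $\supp\eta_T$. The only difference is the choice of parametrization — you use the global arc-length parametrization of $\partial K$ after rotating so $v(\omega)$ is vertical, while the paper rotates so $\omega$ is vertical and writes each arc of $S^1_K(x,t)\cap T$ as a local graph with a case split on which coordinate to solve for — and your version avoids that case split, with the non-degeneracy $|v_1'(s_\pm)|\sim 1$ made explicit via $v(\omega)\cdot\omega=\|\omega\|_{K^*}\sim|\omega|$ (although that extra observation is not strictly needed, since $|v_1'|\le 1$ alone already yields $|s-s_\pm|\gtrsim R_j^{-1/2+\delta}$).
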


\begin{proof} 
	We will prove the stronger estimate that for every $t$:
	
	$$  T_{K,x} (M_T \mu_1) (t)  \lesssim \RD(R_j). $$
	
	Recall that
	
	\begin{equation} \label{dxMTK} T_{K,x}  M_T \mu_1(t) = t^{1/2}  \int_{S^1_K(x,t)} M_T \mu_1(y) d\sigma_t(y), \end{equation}
	
	\noindent where $S^1_K(x,t)$ is the circle around $x$ of radius $t$ in the norm $\| \cdot \|_K$ and $\sigma_t$ is the normalized arc length measure on it.
	
	We also recall that 
	
	$$ M_T \mu_1 = \eta_T (\psi_{j, \tau} \hat \mu_1)^\vee = \eta_T (\psi_{j, \tau}^\vee * \mu_1). $$
	
	Now $\psi_{j, \tau}^\vee$ is concentrated on a $R_j^{-1/2} \times R_j^{-1}$ rectangle centered at 0 and it decays rapidly outside that rectangle.  Since $x \in E_2$, the distance from $x$ to the support of $\mu_1$ is $\gtrsim 1$.  Therefore, $T_{K,x} (M_T \mu_1) (t)$ is tiny unless $t \sim 1$.
	
	To study the case when $t \sim 1$, we expand out $M_T \mu_1$:
	
	$$ M_T \mu_1 (y) = \eta_T(y) ( \psi_{j, \tau} \hat \mu_1 )^{\vee}(y)  = \eta_T(y)  \int e^{2 \pi i \omega y} \psi_{j, \tau}(\omega) \hat \mu_1( \omega) d \omega. $$
	
	Since $| \hat \mu_1(\omega) | \le 1$, and $\psi_{j, \tau}(\omega)$ is supported on $\tau$ and bounded by 1, it suffices to check that for each $\omega \in \tau$, 
	
	\begin{equation} \label{statphaseK} \int_{S^1_K(x,t)} \eta_T(y) e^{2 \pi i \omega y} d\sigma_t(y) \le \RD(R_j). \end{equation}
	
	We will prove this rapid decay by stationary phase.  After a coordinate rotation, we can assume that $\omega$ has the form $(0, \omega_2)$ with $\omega_2 \sim R_j$.  Let $T_0 \in \TT_{j, \tau}$ be the tube that passes through $x$.  The tubes of $\TT_{j, \tau}$ have long axis perpendicular to $\partial (RK^*)$ at a point in $\tau$.  In other words, the long axis of a tube in $\TT_{j, \tau}$ is parallel to $v(\omega)$ for $\omega \in \tau$ (up to angle $R^{-1/2}$).  The tube $T_0$ intersects $S^1_K(x,t)$ in two arcs, and on these arcs, the normal vector to $S^1_K(x,t)$ points in the direction $\omega(v(\omega)) = \omega$, which is vertical.  Now $T$ is not $T_0$ -- we know that $x \notin 2T$, and so the distance from $T$ to $T_0$ is $\gtrsim R^{-1/2 + \delta}$.  By the strict convexity of $K$, if $y \in S^1_K(x,t) \cap T$, then the normal vector to $S^1_K(x,t)$ at $y$ makes an angle $\gtrsim R^{-1/2 + \delta}$ with the vertical.
	
	The tube $T$ intersects $S^1_K(x,t)$ in one or two arcs.  We parametrize each arc as a graph -- either $y_2 = h(y_1)$ or $y_1 = h(y_2)$ -- over an interval $I(T)$.  By choosing one of these two options, we can assume that $h$ and all its derivatives are $\lesssim 1$ on $I(T)$.  Let us assume first that $y_2 = h(y_1)$ since this is the more interesting case.  Our integral becomes
	
	$$ \int_{I(t)} \eta_T(y_1, h(y_1)) e^{2 \pi i \omega_2 h(y_1)} J(y_1) dy_1. $$
	
	\noindent This is the same integral that appears in the proof of Lemma \ref{xnotinT}.   If $y_1 \in I(T)$, then $(y_1, y_2) \in S^1_K(x,t) \cap T$, and so the normal vector to $S^1_K(x,t)$ at $y$ makes an angle $\gtrsim R_j^{-1/2 + \delta}$ with the vertical.  Therefore, for $y_1 \in I(T)$, $| h'(y_1)| \gtrsim R_j^{-1/2 + \delta}$, just like in the proof of Lemma \ref{xnotinT}.  We can prove the desired estimate by the same stationary phase argument as in the proof of Lemma \ref{xnotinT}.
	
 If $y_1 = h(y_2)$, then we have a similar but easier integral:
	
	$$ \int_{I(T)} \eta_T(h(y_2), y_2) e^{2 \pi i \omega_2 y_2} J(y_2) dy_2. $$
	
	\noindent This integral is the same as the one appearing at the end of the proof of Lemma \ref{xnotinT}. 
	
	\end{proof}
	
It is also straightforward to check that if $f$ is supported on the annulus $\{ y: \| x- y\|_K \sim 1\}$, then

$$ \| T_{K,x} f \|_{L^1} \lesssim \| f \|_{L^1}. $$

The rest of the proof of Proposition \ref{mainest1} is unchanged.

\subsection{A general curve version of Liu's identity}

In the proof of Proposition \ref{mainest2}, the only ingredient that needs to be adjusted for general norms $K$ is Liu's $L^2$ identity -- Theorem \ref{Liu}.  The argument in \cite{Liu18} seemingly relies heavily on the rotation invariance of the circle.  We give a different approach to Theorem \ref{Liu}, and we show that it extends (modulo a negligible tail term) to more general metrics.  

The analogue of Liu's theorem is the following.

\begin{lemma} \label{LiuK} There is a smooth (not necessarily positive) measure $\sigma^{K^*}$ on $\partial K^* = S^1_{K^*}(1)$ so that the following holds.  Define a measure $\sigma^{K^*}_r$ on $S^1_{K^*}(r)$ by setting
	
	$$ \sigma^{K^*}_r (A)= \sigma^{K^*} (A/r). $$
	
	Suppose that $f(y)$ is supported in the annulus $\| x - y \|_{K} \sim 1$.  Then
	
\begin{equation} \label{eqLiuK} \int | f * \sigma^K_t (x)|^2 t dt \lesssim \int | f* \widehat{\sigma^{K^*}_r} (x)|^2 r dr + O(\|f\|_{\dot H^{-\frac{1}{2}+\epsilon}}^2 + \|f \|_{\dot H^{-\frac12}}^2 ). \end{equation}
	
\end{lemma}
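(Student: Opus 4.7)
The plan is to apply stationary phase to $\widehat{\sigma^K_t}$ and then use one-dimensional Plancherel in the $K^*$-radial variable to reduce both sides of \eqref{eqLiuK} to the same integral $\int_0^\infty|F_x(r)|^2\,dr$ of a 1D function built from $\hat f$, with everything lost along the way absorbed into the Sobolev error. The essential input is that for smooth symmetric strictly convex $K\subset\RR^2$ and $t|\xi|\gtrsim 1$ one has the stationary phase asymptotic
\[
\widehat{\sigma^K_t}(\xi) = A(\xi/|\xi|)\,(t|\xi|)^{-1/2}\cos\!\bigl(2\pi t\|\xi\|_{K^*}-\pi/4\bigr) + E(t,\xi),
\]
with $A$ smooth on $S^1$ (built from the curvature of $\partial K$ at the two points whose outward normal is parallel to $\pm\xi$) and $|E(t,\xi)|\lesssim\min(1,(t|\xi|)^{-3/2})$.

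First I would substitute the leading term into $f*\sigma^K_t(x)=\int\hat f(\xi)\widehat{\sigma^K_t}(\xi)e^{2\pi ix\xi}\,d\xi$ and change to $K^*$-polar coordinates $\xi=r\omega$, $\omega\in\partial K^*$, so that $\|\xi\|_{K^*}=r$ and $d\xi=r\,J(\omega)\,dr\,dS_*(\omega)$ for the corresponding Jacobian $J$. Introducing
\[
F_x(r):=r^{1/2}\int_{\partial K^*}\hat f(r\omega)\,A(\omega/|\omega|)|\omega|^{-1/2}J(\omega)\,e^{2\pi ixr\omega}\,dS_*(\omega)
\]
and writing $\cos=\tfrac12(e^{i\cdot}+e^{-i\cdot})$, the main part of $f*\sigma^K_t(x)$ takes the form $t^{-1/2}g_x(t)$ with $g_x(t)=\mathrm{Re}\bigl(e^{-i\pi/4}G_x(t)\bigr)$ and $G_x(t)=\int_0^\infty e^{2\pi itr}F_x(r)\,dr$. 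Therefore
\[
\int_0^\infty t\,|\mathrm{Main}(t,x)|^2\,dt = \int_0^\infty|g_x(t)|^2\,dt \le \int_{-\infty}^\infty|G_x(t)|^2\,dt = \int_0^\infty|F_x(r)|^2\,dr
\]
by 1D Plancherel applied to the half-line extension $F_x\chi_{(0,\infty)}$.

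Next I would define the signed smooth measure $\sigma^{K^*}$ on $\partial K^*$ by $d\sigma^{K^*}(\omega):=A(\omega/|\omega|)|\omega|^{-1/2}J(\omega)\,dS_*(\omega)$; smoothness of $A$, $J$, and $|\omega|^{-1/2}$ on $\partial K^*$ makes this a smooth (and possibly signed) measure, as allowed by the statement. Unwinding $\sigma^{K^*}_r(B)=\sigma^{K^*}(B/r)$ gives $f*\widehat{\sigma^{K^*}_r}(x)=\int_{\partial K^*}\hat f(r\omega)e^{2\pi ixr\omega}d\sigma^{K^*}(\omega)=r^{-1/2}F_x(r)$, so $\int_0^\infty r|f*\widehat{\sigma^{K^*}_r}(x)|^2\,dr=\int_0^\infty|F_x(r)|^2\,dr$, identifying the RHS of \eqref{eqLiuK} with the Plancherel bound on the main term.

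The remaining work, which is the principal analytic step, is to bound the contribution of $E$ to $\int_0^\infty t|f*\sigma^K_t|^2\,dt$ by the Sobolev error. I would split $(t,\xi)$-space into the regions $t|\xi|\le 1$ (where stationary phase is unavailable and $|E|\lesssim 1$) and $t|\xi|\ge 1$ (where $|E|\lesssim(t|\xi|)^{-3/2}$); in each region, Cauchy--Schwarz on the Fourier side with weight $|\xi|^{-1+2\epsilon}$ in the former and $|\xi|^{-1}$ in the latter, together with Fubini and the 1D integrals $\int_0^{1/|\xi|}t\,dt\sim|\xi|^{-2}$ and $\int_{1/|\xi|}^\infty t(t|\xi|)^{-3}\,dt\sim|\xi|^{-2}$, produces the claimed $\|f\|_{\dot H^{-1/2+\epsilon}}^2$ and $\|f\|_{\dot H^{-1/2}}^2$ bounds respectively. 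The main obstacle will be treating the cross term $2\,\mathrm{Re}\bigl(\mathrm{Main}(t,x)\cdot\overline{(f*E_t)(x)}\bigr)$ in the expansion of $|f*\sigma^K_t|^2$: here $E$ oscillates with phase $e^{\pm 2\pi it\|\xi\|_{K^*}}$ matching that of Main, so the bound cannot rely on size alone, and one must integrate by parts once in $t$ — using that $\|\xi\|_{K^*}\sim|\xi|$ is bounded below in the relevant high-frequency range — to convert the oscillatory cancellation into the missing factor of $|\xi|^{-1}$ needed to close the Sobolev estimate.
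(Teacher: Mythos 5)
Your construction of $\sigma^{K^*}$, the $K^*$-polar change of variables, and the use of one-dimensional Plancherel in the $K^*$-radial variable to identify $\int|\mathrm{Main}(t,x)|^2\,t\,dt$ with $\int_0^\infty r\,|f*\widehat{\sigma^{K^*}_r}(x)|^2\,dr$ is essentially the paper's argument, and that part is correct. The issues are entirely in the treatment of the remainder.

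First, you never use that $f*\sigma^K_t(x)$ is supported in $t\sim 1$ (which follows from the annular support of $f$). Without this restriction, your plan of integrating the error over all $t\in(0,\infty)$ does not close. Concretely: after Cauchy--Schwarz on the Fourier side with a weight $|\xi|^{-2s}$, you are left with $\int_{\RR^2} |\xi|^{2s}\bigl(\int t|E(t,\xi)|^2\,dt\bigr)\,d\xi$. Your 1D integrals give $\int t|E(t,\xi)|^2\,dt\sim|\xi|^{-2}$ for each fixed $\xi$, so you need $\int_{\RR^2}|\xi|^{2s-2}\,d\xi=\int_0^\infty r^{2s-1}\,dr$ to converge; this diverges at $0$ or at $\infty$ (or both) for every $s$. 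The paper avoids this by using the support condition: the error term $|f*E_t(x)|$ is bounded \emph{uniformly} for $t\sim1$ by $\int_{|\xi|\le1}|\hat f||\xi|^{-1/2}\,d\xi+\int_{|\xi|\ge1}|\hat f||\xi|^{-3/2}\,d\xi$, and then one only integrates over $t$ in an interval of bounded length. With $t\sim1$ the effective bounds are $|E|\lesssim|\xi|^{-1/2}$ for $|\xi|\lesssim1$ and $|E|\lesssim|\xi|^{-3/2}$ for $|\xi|\gtrsim1$, and Cauchy--Schwarz then yields $\|f\|_{\dot H^{-1/2}}$ from low frequency and $\|f\|_{\dot H^{-1/2+\epsilon}}$ from high frequency. (Note this is the reverse pairing from what you wrote, and the bound $|E|\lesssim1$ on $t|\xi|\le1$ is also not quite right, because the two subtracted leading terms are each of size $(t|\xi|)^{-1/2}$ there; the correct bound is $|E|\lesssim(t|\xi|)^{-1/2}$.)

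Second, your final paragraph on the cross term is a non-issue. You are proving an upper bound, so you may simply use $|a+b+c|^2\lesssim|a|^2+|b|^2+|c|^2$ (with $a,b$ the two oscillatory main terms and $c$ the error) and then integrate each piece in $t$; no integration by parts in $t$ and no exploitation of oscillatory cancellation between the main and error terms is needed. This is exactly how the paper proceeds, and it removes the ``principal analytic step'' you anticipated. The claim that ``the bound cannot rely on size alone'' is incorrect once the $t\sim1$ support restriction is in place.
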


\begin{proof} We abbreviate
	
	$$ F(t) = T_{K,x} f(t) = t^{1/2} \sigma^K_t * f(x). $$
	
Note that the left-hand side of (\ref{eqLiuK}) is $\int F(t)^2 dt$.   Also, because of the support condition on $f$, $F(t)$ is supported on $t \sim 1$.

We begin with an estimate for $\widehat{\sigma^K}$ which was derived by Herz in \cite{Herz}: When $|\xi| \ge 1$, we have

$$\widehat{\sigma^K}(\xi) = |\xi|^{-\frac{1}{2}} \kappa(\xi)^{-\frac{1}{2}} e^{2\pi i (\| \xi \|_{K^*} -\frac{1}{8})} + |\xi|^{-\frac{1}{2}} \kappa(-\xi)^{-\frac{1}{2}} e^{ -2\pi i (\| \xi \|_{K^*} -\frac{1}{8})} + O(|\xi|^{-\frac{3}{2}}),$$

\noindent where $\kappa(\xi)$ is the Gaussian curvature of $\partial K$ at $v(\xi)$ -- the vector with $\xi \cdot v(\xi) = \max_{v \in K} \xi \cdot v$.  Note that $\partial K$ is symmetric and so $\kappa(\xi) = \kappa(-\xi)$.  Also $\widehat{\sigma^K_t}( \xi) = \widehat{\sigma^K}(t \xi)$.  Therefore,

$$\widehat{\sigma^K_t}(\xi) = |t \xi|^{-1/2}\kappa(\xi)^{-\frac{1}{2}}( e^{2\pi i (t \| \xi \|_{K^*} -\frac{1}{8})}      +  e^{-2\pi i (t \| \xi \|_{K^*} -\frac{1}{8})}   )
+ O(|t\xi|^{-\frac{3}{2}}). $$

This bound holds for $|t \xi| \gtrsim 1$.  If $|t \xi| \lesssim 1$, then we have the simpler bound $|\widehat{ \sigma^K_t}(\xi)| \lesssim 1$ which gives the same expression with a remainder term of the form $|t \xi|^{-1/2}$ on the right-hand side.

Now we return to $F(t)$.  We have

$$ F(t) = t^{1/2} \sigma^K_t * f(x) = t^{1/2} \int e^{2 \pi i x \cdot \xi}  \widehat{\sigma^K_t} (\xi) \hat f (\xi) d \xi. $$

Plugging in the formula above for $\widehat{\sigma^K_t}$, we get two main terms and a remainder term -- for every $t \sim 1$, 

$$ F(t) = F_1(t) + F_2(t) + O\left(\int_{|\xi| \ge 1} | \widehat{f}(\xi) |  |\xi|^{-\frac{3}{2}} d\xi + \int_{|\xi| \le 1} | \hat f(\xi)| |\xi|^{-1/2} d \xi \right), $$

\noindent where

$$ F_1 (t) = e^{-i \frac{\pi}{4}} \int e^{2\pi i x \cdot\xi } \widehat{f}(\xi) \kappa(\xi)^{-\frac{1}{2}} |\xi|^{-\frac{1}{2}}  e^{2\pi i  t \| \xi \|_{K^*}} d\xi .$$
$$ F_2(t) = e^{i \frac{\pi}{4}} \int e^{2\pi i x \cdot\xi } \widehat{f}(\xi) \kappa(\xi)^{-\frac{1}{2}} |\xi|^{-\frac{1}{2}} e^{-2\pi i  t \| \xi \|_{K^*}} d\xi .$$

\noindent Notice that the $t^{1/2}$ in the expression $F(t) = t^{1/2} \sigma^K_t * f(x)$ cancelled the $t^{-1/2}$ in front of $\widehat{\sigma^K_t}(\xi)$.  This cancellation is the motivation for the expression $t^{1/2} \sigma^K_t * f$.  It leads to a simple formula for $\hat F_1$ and $\hat F_2$, which we can use to estimate $\int |F_1(t)|^2$ and $\int |F_2(t)|^2$.  

Before turning to Plancherel, let us mention that the formula for $F_i(t)$ makes sense for all real $t$.  

To find the formula for $\hat F_1(r)$, we will massage the definition of $F_1(t)$ into the form $F_1(t) = \int_0^\infty e^{2 \pi i r t} G(r) dr$.  Then it will follow that $\hat F_1(r) = G(r)$ (and that $\hat F_1$ is supported in $[0, \infty)$.)  Now we process the formula for $F_1$.  First, we write $\xi = r \theta$ where $r = \| \xi \|_{K^*}$ and $\theta \in S^1_{K^*} = \partial K^*$.  We can do a change of variables $d \xi = J(\theta) r dr d \theta, $ where $d \theta$ is arc length measure on $S^1_{K^*}$ and $J(\theta)$ is smooth and bounded.  Then we get

$$ F_1(t) =  e^{-i \frac{\pi}{4}} \int_0^\infty \int_{S^1_{K^*}} e^{2\pi i x \cdot\xi } \widehat{f}(\xi) \kappa(\theta)^{-\frac{1}{2}} |\xi|^{-\frac{1}{2}} J(\theta) d \theta e^{2\pi i  t r} r dr,$$

\noindent where $\xi = r \theta$.  We rewrote $\kappa(\xi) = \kappa(\theta)$ since $\kappa(\xi)$ only depends on the direction of $\xi$.  Up to another smooth factor $\tilde J(\theta)$, we have $|\xi| = r$, and so, after redefining $J(\theta)$, we have

$$ F_1(t) =  \int_0^\infty \left( e^{-i \frac{\pi}{4}}  \int_{S^1_{K^*}} e^{2\pi i x \cdot\xi } \widehat{f}(\xi) \kappa(\theta)^{-\frac{1}{2}}  J(\theta) d \theta r^{1/2}  \right) e^{2\pi i  t r} dr .$$

Therefore,

$$ \hat F_1(r) =  e^{-i \frac{\pi}{4}} \int_{S^1_{K^*}} e^{2\pi i x \cdot\xi } \widehat{f}(\xi) \kappa(\theta)^{-\frac{1}{2}}  J(\theta) d \theta r^{1/2}.$$

Now define $\sigma^{K^*} = \kappa(\theta)^{-1/2} J(\theta) d \theta$, a smooth measure on $S^1_{K^*}$, and we have

$$ \hat F_1(r) =   e^{-i \frac{\pi}{4}}  f* \widehat{\sigma^{K^*}_r} (x) r^{1/2}. $$

Now by Plancherel, we get

$$ \int | F_1(t)|^2 dt = \int | \hat F_1(r)|^2 dr = \int_{0}^\infty | f* \widehat{\sigma^{K^*}_r} (x)|^2 r dr. $$

A similar bound applies to $F_2$.  

Finally, we put it all together:

$$ \int |F(t)|^2 dt \sim \int_{t \sim 1} |F(t)|^2 dt \lesssim \int |F_1(t)|^2 dt + \int |F_2(t)|^2 dt + \textrm{ Remainder}, $$

\noindent where

$$ | \textrm{Remainder} | \lesssim \int_{|\xi| \ge 1} | \widehat{f}(\xi) |  |\xi|^{-\frac{3}{2}} d\xi + \int_{|\xi| \le 1} | \hat f(\xi)| |\xi|^{-\frac12} d \xi. $$

The main two terms are $\lesssim \int_0^\infty |f * \widehat{\sigma^{K^*}_r}(x)|^2 r dr$.  The remainder terms are controlled by Cauchy-Schwarz:

$$ \left( \int_{|\xi| \ge 1} | \widehat{f}(\xi) |  |\xi|^{-\frac{3}{2}} d\xi \right)^2 \lesssim \left( \int | \hat f(\xi)|^2 |\xi|^{-1 + 2 \epsilon} d\xi \right) \left(\int_{|\xi| \ge 1} |\xi|^{-2 - 2 \epsilon} d \xi \right) \lesssim \| f \|_{\dot H^{-\frac12 + \epsilon}}^2.$$

$$  \left( \int_{|\xi| \le 1} | \widehat{f}(\xi) |  |\xi|^{-\frac{1}{2}} d\xi \right)^2 \lesssim \int_{|\xi| \le 1} |\hat f|^2 |\xi|^{-1} d \xi \le \| f \|_{\dot H^{-\frac12}}^2. $$

\end{proof}

We wish to apply this Lemma with $f= \mug$.  Our $\mug$ is rapidly decaying outside of a tiny neighborhood of $E_1$, and so if $x \in E_2$, $\mug$ is essentially supported in an annulus of the form $\| x-y \|_K \sim 1$.  So we can apply Lemma \ref{LiuK}, and to make use of it we just need to check that the remainder terms are finite: in other words

$$ \| \mug \|_{\dot H^{-s}} < \infty, $$

\noindent for $s = 1/2$ or $1/2 - \epsilon$.  

Let us first check that $ \| \mug \|_{\dot H^{-s}} \lesssim \| \mu_1 \|_{\dot H^{-s}}$.  Indeed,

$$ \| \mug \|_{\dot H^{-s}}^2 \lesssim \sum_{j, \tau} R_j^{-2s} \sum_{T \in \TT_{j, \tau}, T \textrm{ good}}  \int | \widehat{M_T \mu_1} |^2.$$

Applying Plancherel, 

$$ \sum_{T \in \TT_{j, \tau}, T \textrm{ good}}  \int | \widehat{M_T \mu_1} |^2 = \sum_{T \in \TT_{j, \tau}, T \textrm{ good}}  \int | \eta_T |^2 | (\psi_{j,\tau} \hat \mu_1)^\vee |^2 \lesssim $$

$$ \lesssim  \int | (\psi_{j,\tau} \hat \mu_1)^\vee |^2  = \int | \psi_{j,\tau}|^2 |\hat \mu_1|^2. $$

Plugging into the above, we see that

$$ \| \mug \|_{\dot H^{-s}}^2 \lesssim \sum_{j, \tau} R_j^{-2s}  \int | \psi_{j,\tau}|^2 |\hat \mu_1|^2 \lesssim \int |\xi|^{-2s} |\hat \mu_1|^2 = \| \mu_1 \|_{\dot H^{-s}}^2. $$

The norm $\| \mu_1 \|_{\dot H^{-s}}$ is related to the dimension $\alpha$ as follows.  Recall that the $\beta$-dimensional energy of a measure $\mu$ is given by

$$ I_\beta (\mu) := \int |x-y|^{-\beta} \mu(x) \mu(y). $$

There is also a Fourier representation for $I_\beta(\mu)$ (cf. Proposition 8.5 of \cite{W03}): if $\mu$ is a measure on $\RR^n$, then 

$$ I_{\beta} (\mu) = c_{n, \beta}  \int_{\RR^n} |\xi|^{-(n - \beta)} |\hat \mu(\xi) |^2 d \xi. $$

In particular $\| \mu_1 \|_{\dot H^{-s}}^2 = \int |\xi|^{-2s} | \hat \mu_1 |^2 = I_{2-2s} (\mu_1)$.  If a measure $\mu$ on the unit ball obeys $\mu (B(x,r)) \lesssim r^\alpha$, then $I_\beta(\mu)$ is finite for every $\beta < \alpha$  (cf. Lemma 8.3 of \cite{W03}).  In particular, $I_\beta(\mu_1) < \infty$ for every $\beta < \alpha$.  Therefore, $\| \mu_1 \|_{\dot H^{-s}} < \infty$ whenever $2-2s < \alpha$ or $s > 1 - \alpha/2$.  In particular, if $\alpha > 1$, then the remainder terms are controlled and we get

$$\| T_{K,x} \mug \|_{L^2}^2 = \int | \mug * \sigma^K_t (x)|^2 t dt \lesssim \int_0^\infty | \mug * \widehat{ \sigma^{K^*}_r} (x)|^2 r dr + O(1). $$

Integrating with respect to $d \mu_2(x)$, we get

$$ \int_{E_2}\| T_{K,x} \mug \|_{L^2}^2 \lesssim \int_0^\infty \left(\int_{E_2} | \mug * \widehat{ \sigma^{K^*}_r} (x) |^2 d \mu_2(x) \right) r dr + O(1). $$

As in the proof in the Euclidean case, we bound the inner integral using Corollary \ref{correfdec}.  The proof is essentially the same as in the Euclidean case, but when we check that each piece $f_T$ is microlocalized correctly, we have to take into account the angles between the tubes $T \in \TT_{j, \tau}$ and the normal vector to $\partial K^*$ in the $\tau$ direction.  Here are the details.

$$ \mug = M_0 \mu_1 + \sum_{j \ge 1, \tau} \sum_{T \in \TT_{j, \tau}, T \textrm{ good}} M_T \mu_1. $$

When we convolve with $ \widehat{ \sigma^{K^*}_r} $, the only terms that remain are those with Fourier support intersecting $S^1_{K^*}(r)$.  So $\mug *  \widehat{ \sigma^{K^*}_r} $ is essentially equal to 

$$ \sum_{R_j \sim r} \sum_\tau \sum_{T\in \TT_{j, \tau} T \textrm{ good}} M_T \mu_1 *  \widehat{ \sigma^{K^*}_r} . $$

Let $\eta_1$ be a bump function adapted to the unit ball.  We define 

$$ f_T = \eta_1 \left( M_T \mu_1 *  \widehat{ \sigma^{K^*}_r}  \right). $$

\noindent We claim that each $f_T$ is microlocalized in the way we would want to apply Corollary \ref{correfdec}.  If $T \in \TT_{j, \tau}$, then we let $\theta(T)$ be the 1-neighborhood of  $3 \tau \cap S^1_{K^*}(r)$.  We claim that $\hat f_T$ is essentially supported in $\theta(T)$.  First we recall that $\widehat{M_T \mu_1}$ is essentially supported in $2 \tau$.  Therefore, the Fourier transform of $M_T \mu_1 *  \widehat{ \sigma^{K^*}_r} $ is essentially supported in $2 \tau \cap S^1_{K^*}(r)$.  Finally, the Fourier transform of $f_T$ is essentially supported in the 1-neighborhood of $2 \tau \cap S^1_{K^*}(r)$, which is contained in $\theta(T)$.  Note that $\theta(T)$ is a rectangular block of dimensions roughly $r^{1/2} \times 1$.  

Next we claim that $f_T$ is essentially supported in $2T$.  We know that $M_T \mu_1$ is supported in $T$.  Let $\tilde \psi_\tau$ be a smooth bump function which is 1 on $2 \tau$ and rapidly decaying.  Since the Fourier transform of $M_T \mu_1$ is essentially supported on $2 \tau$, we have $M_T \mu_1 *  \widehat{ \sigma^{K^*}_r} $ is essentially equal to $M_T \mu_1 * (\tilde \psi_\tau \sigma^{K^*}_r )^\wedge$.  It is standard to check by stationary phase that $(\tilde \psi_\tau \sigma^{K^*}_r )^\wedge$ is bounded by $\RD(r)$ on $B^2(1)$ outside of a tube of radius $r^{-1/2 + \delta}$ in the direction which is normal to $S^1_{K^*}(r)$ in $\tau$.  By construction, the tube $T$ also goes in this direction.  Therefore, $M_T \mu_1 * (\tilde \psi_\tau \sigma^{K^*}_r)^\wedge$ is negligible on $B^2(1) \setminus 2T$.  So $f_T$ is essentially supported on $2T$.

The rest of the proof of Proposition \ref{mainest2} is the same as in the Euclidean case.  When we apply Theorem \ref{refdec}, the surface $S$ that we use is $\partial K^*$.  Since Theorem \ref{refdec} only requires $S$ to be a $C^2$ hypersurface with all extrinsic curvatures $\sim 1$, it applies to $\partial K^*$.

\subsection{Norms with some points of vanishing curvature}

Theorem \ref{mainK} applies to norms $\| \cdot  \|_K$ where $\partial K$ has strictly positive curvature everywhere.  This assumption rules out the $l^p$ norms for all $p \not=2$.  If $1 < p < \infty$, and $p \not= 2$, then there are finitely many points on the boundary of the unit ball where the curvature vanishes.  Theorem \ref{mainK} can be generalized to the case when $\partial K$ is smooth and the curvature vanishes at finitely many points by a small extra trick.  We first set up $E_1$, $E_2$, $\mu_1$, and $\mu_2$ as usual, but then we refine them to avoid the directions where the curvature of $K$ vanishes.  Let $r_0$ be a small radius that we can choose later.  Let $B_1$ be any ball of radius $r_0$ with $\mu_1(B_1) > 0$, and replace $E_1$ by $E_1 \cap B_1$.  Then cover $E_2$ with balls of radius $r_0$.  We call a ball $B$ from this covering bad if there are points $x_1 \in B_1$ and $x_2 \in B$ so that the vector $x_2 - x_1$ is parallel to a vector $v \in \partial K$ where the curvature of $\partial K$ vanishes.  The number of bad balls is $\lesssim r_0^{-1}$.  Since $\mu_2(B(x,r)) \lesssim r^\alpha$ with $\alpha > 1$, we can find a good ball $B_2$ with $\mu_2(B_2) > 0$.  Now we replace $E_2$ by $E_2 \cap B_2$.  We redefine $\mu_1$ and $\mu_2$ to be supported on our new smaller sets $E_1$ and $E_2$.  

If $x_i \in B_i$, then the vector $(x_2 - x_1) / | x_2 - x_1|$ lies in an arc of $\partial K$ of length $\sim r_0$ which avoids all the flat points of $\partial K$.  Now we define $\tilde K$ to be a different symmetric convex body so that $\partial \tilde K$ includes this arc of $\partial K$ but $\partial \tilde K$ is smooth with strictly positive curvature everywhere.  We can apply our proof to $\| \|_{\tilde K}$.  It gives us a point $x \in E_2$ so that $d^x_{\tilde K} (E_1)$ has positive Lebesgue measure.  But if $x_1 \in E_1$ and $x \in E_2$, then $\| x - x_1 \|_K = \| x - x_1 \|_{\tilde K}$, and so $d^x_{K} (E_1)$ has positive Lebesgue measure also.

\vskip.125in 

\section{Applications of the main results to the Erd\H os distance problem for general norms}  \label{sec:erdos}

\vskip.125in 

The purpose of this section is to prove Corollary \ref{falconertoerdosthm} and extend it to a more general collection of point sets. The following definition is due to the second listed author, Rudnev and Uriarte-Tuero (\cite{IRU14}). 

\begin{definition} \label{sadaptable} Let $P$ be a set of $N$ points contained in ${[0,1]}^d$. Define the measure
\begin{equation} \label{pizdatayamera} d \mu^s_P(x)=N^{-1} \cdot N^{\frac{d}{s}} \cdot \sum_{p \in P} \chi_B(N^{\frac{1}{s}}(x-p)) dx, \end{equation} where $\chi_B$ is the indicator function of the ball of radius $1$ centered at the origin. We say that $P$ is \emph{$s$-adaptable} if there exists $C$ independent of $N$ such that 
\begin{equation} \label{sadaptenergy} I_s(\mu_P)=\int \int {|x-y|}^{-s} d\mu^s_P(x) d\mu^s_P(y) \leq C. \end{equation} 
\end{definition} 

It is not difficult to check that (\ref{sadaptenergy}) is equivalent to the condition 
\begin{equation} \label{discreteenergy} \frac{1}{N^2} \sum_{p \not=p'} {|p-p'|}^{-s} \leq C. \end{equation} 

\noindent It is also easy to check that if the distance between any two points of $P$ is $\gtrsim N^{-1/2}$, then (\ref{discreteenergy}) holds for any $s \in [0,d)$, and hence $P$ is $s$-adaptable.

We will prove that if $P$ is $s$-adaptable, then for some $x \in P$, $| \Delta_{K,x}(P)| \gtrapprox N^{4/5}$.  As a special case, this implies Corollary \ref{falconertoerdosthm}.

Fix $s>\frac{5}{4}$ and define $d\mu^s_P$ as above. Note that the support of $d\mu^s_P$ is 
$P^{N^{-\frac{1}{s}}}$, the $N^{-\frac{1}{s}}$-neighborhood of $P$. Since $I_s(\mu_P^s)$ is uniformly bounded, the proof of Theorem \ref{main2} implies that there exists $x_0 \in P^{N^{-1/s}}$ so that
$$ {\mathcal L}(\Delta_{K, x_0}(P^{N^{-\frac{1}{s}}})) \ge c>0,$$ 

\noindent where the constant $c$ only depends on the value of $C$ in (\ref{discreteenergy}).  

Let $x$ be a point of $P$ with $|x-x_0| \le N^{-1/s}$.  It follows that for any $y$, $\| x_0 - y \|_K = \| x-y\|_K + O(N^{-1/s})$.  Let $E_{N^{-1/s}} \left(\Delta_{K,x}(P) \right)$ be the smallest number of $N^{-1/s}$-intervals needed to cover $\Delta_{K,x}(P)$.  We know that $\Delta_{K, x_0} (P^{N^{-1/s}})$ is contained in the $O(N^{-1/s})$ neighborhood of $\Delta_{K,x}(P)$, and so 

$$ {\mathcal L}(\Delta_{K, x_0}(P^{N^{-\frac{1}{s}}})) \lesssim N^{-\frac{1}{s}} E_{N^{-1/s}} \left(\Delta_{K,x}(P) \right). $$

Then our lower bound on ${\mathcal L}(\Delta_{K, x_0}(P^{N^{-\frac{1}{s}}}))$ gives 

$$E_{N^{-1/s}} \left(\Delta_{K,x}(P) \right) \gtrsim N^{1/s}. $$

In other words, $\Delta_{K,x}(P)$ contains $\gtrsim N^{1/s}$ different distances that are pairwise separated by $\gtrsim N^{-1/s}$.  In particular, $|\Delta_{K,x}(P)| \gtrsim N^{1/s}$.  Since this holds for every $s > 5/4$, we get $| \Delta_{K,x}(P)| \gtrapprox N^{4/5}$ as desired.

\section{Appendix: discussion of the lower bound on the upper Minkowski dimension of $\Delta_{x,K}(E)$ in Remark \ref{ksanalogrmk}} 

\vskip.125in 

Let $\rho$ be a smooth cut-off function supported in the ball of radius $2$ and equal to $1$ in the ball of radius $1$ centered at the origin. Let $\rho_{\delta}(x)=\delta^{-d} \rho(\delta^{-1}x)$. Following the argument in (\ref{yes}) with $\mu_{1,good}$ replaced by $\mu_{1,good}*\rho_{\delta}$, we see that the Lebesgue measure of the $\delta$-neighborhood of $\Delta_{x,K}(E)$ is bounded from below by 
$$ \frac{\left(1-\frac{2}{1000}\right)^2}{\int {|d_{*}^{x}\mu_{1,good}*\rho_{\delta}|}^2}.$$

Following (\ref{nirvanaisnear}) with $\mu_{1,good}$ replaced by $\mu_{1,good}*\rho_{\delta}$, we see that the expression above is bounded from below by $C \delta^{\frac{5}{3}-\frac{4 \alpha}{3}+\epsilon}$, hence there exists $x \in E$ such that the upper Minkowski dimension of $\Delta_{x,K}(E)$ is bounded from below by 
$$ 1-\left(\frac{5}{3}-\frac{4 \alpha}{3} \right)=\frac{4}{3}\alpha-\frac{2}{3},$$ as claimed. 

\vskip.125in 

It would be interesting to obtain a lower bound on the Hausdorff dimension of $\Delta_{x,K}(E)$. If $\mu_{1,good}$ were positive, it would be sufficient to show that 
\begin{equation} \label{near1key} \int_{E_2} I_{\gamma} d^x_*(\mug) d\mu_2(x) \end{equation} is bounded with $\gamma<\frac{4}{3}\alpha-\frac{2}{3}$. This estimate follows from the same argument as in (\ref{nirvanaisnear}) above. Unfortunately, in view of the fact that $\mu_{1,good}$ is complex valued, the estimate (\ref{near1key}) does not appear to be sufficient to draw the desired conclusion. 

\vskip.25in

\end{document}